\documentclass[11pt, reqno]{amsart}
\usepackage{color}
\usepackage{amsmath}
\usepackage{latexsym,amssymb}
\usepackage[mathscr]{euscript}
\usepackage{stmaryrd}
\usepackage{bm}
\usepackage{enumerate}
\usepackage{setspace}
\newtheorem{thm}{Theorem}[section]
\newtheorem{proposition}{Proposition}[section]
\newtheorem{example}{\bf Example}[section]

\newtheorem{remark}{\bf Remark}[section]
\newtheorem{lemma}{\bf Lemma}[section]
\newtheorem{definition}{Definition}[section]
\numberwithin{equation}{section}
\newtheorem{assumption}{Assumption}[section]


\textwidth=6in
\textheight=8.5in
\parindent=16pt
\oddsidemargin=0.15in
\evensidemargin=0.15in
\topmargin=0.15in


\begin{document}
	
	\baselineskip=17pt
	
	\title[]
	{Discrete-time Zero-Sum Games for Markov chains with risk-sensitive average cost criterion}

\author[M. K. Ghosh]{Mrinal K. Ghosh}
\address{ Department of Mathematics\\
	Indian Institute of Science\\
	Bangalore-560012, India.}
\email{mkg@iisc.ac.in}

\author[S. Golui]{Subrata Golui}
\address{Department of Mathematics\\
	Indian Institute of Technology Guwahati\\
	Guwahati, Assam, India}
\email{golui@iitg.ac.in}

\author[C. Pal]{Chandan Pal}
\address{Department of Mathematics\\
	Indian Institute of Technology Guwahati\\
	Guwahati, Assam, India}
\email{cpal@iitg.ac.in}

\author[S. Pradhan]{Somnath Pradhan}
\address{Department of Mathematics and Statistics\\
	Queen's University\\
	Kingston, Ontario-K7L 3N6, Canada}
\email{sp165@queensu.ca}

	
	\date{}
	
	\begin{abstract}
		\vspace{2mm}
		\noindent
	 We study zero-sum stochastic games for controlled discrete time Markov chains with risk-sensitive average cost criterion with countable state space and Borel action spaces. The payoff function is nonnegative and possibly unbounded. Under a certain Lyapunov stability assumption on the dynamics, we establish the existence of a value and saddle point equilibrium. Further we completely characterize all possible saddle point strategies in the class of stationary Markov strategies.  Finally, we present and analyze an illustrative example.

		\vspace{2mm}
		
		\noindent
		{\bf Keywords:}
		Risk-sensitive zero-sum game,  risk-sensitive average cost criterion, history dependent strategies; Shapley equations, value, saddle point equilibrium.
		
	\end{abstract}
	
	\maketitle
	
	\section{INTRODUCTION}
   We address  a risk-sensitive  discrete-time zero-sum game with long-run (or ergodic) cost criterion where the underlying state dynamics is given by a controlled Markov chains determined by a prescribed transition  kernel. The state space is a denumerable set, actions spaces are Borel spaces and the cost function is possibly
   unbounded. In \cite{BG} this problem is studied with bounded cost under a uniform ergodicity condition. Here we extend the results of \cite{BG}
    to the case with unbounded cost. This is carried out under a certain Lyapunov type stability condition.
      In the risk-neutral criterion, players consider the expected value of the total cost, but in the risk-sensitive criterion, they consider the expected value of the exponential of the total costs. As a result the risk-sensitive criterion  provides   comprehensive protection from  risk since it captures the effects of the higher order moments of the cost as well as its expectation; for more details see \cite{WH}.   We refer to \cite{GH4}, \cite{Z1} for risk-neutral Markov decision processes (MDP), and   \cite{GH1}, \cite{GH2}, \cite{WC4} for stochastic games with risk-neutral criterion.

 The analysis of stochastic systems  with the risk-sensitive average criteria can be traced back to the seminal papers by Jacobson in \cite{Jac} and Howard and Matheson in \cite{HM}.  The literature on risk-sensitive MDP under different cost criteria is quite extensive, e.g., \cite{BR1}, \cite{BP}, \cite{BG3}, \cite{CH}, \cite{GS2}, \cite{GL}, \cite{GZ1}, \cite{HEMA}, \cite{HM}, \cite{DMS1},  \cite{DMS3}, \cite{KP1}, \cite{KP2}, \cite{WC6}, \cite{WH}. The corresponding literature on discrete time ergodic risk-sensitive games can be found in \cite{BG}, \cite{BG1}, \cite{BR}, \cite{WC5}. The articles [\cite{BG}, \cite{BR}] address  zero-sum risk-sensitive stochastic games for discrete time Markov chains with discounted as well as ergodic criteria. Analogous results for continuous time Markov chains are carried out in  \cite{GKP}. The results of the article \cite{BG} are extended to the general state space in \cite{BR}. In \cite{BR},  the ergodic criterion is studied under a local minorization property and a Lyapunov condition. Nonzero-sum risk-sensitive average stochastic games for discrete time Markov chains studied in \cite{BG1}, \cite{WC5}. The papers \cite{BG}, \cite{BR}, \cite{GKP} treat the problem with bounded cost function but in many  real-life situations the cost functions  may be unbounded.

  The analysis of the ergodic cost criterion in \cite{BG}, \cite{BR} is carried out using vanishing discount asymptotics. Following \cite{BP} we study the ergodic cost criterion
  using an approach based on the principal eigenvalue associated with the corresponding Shapley equation.
  Under certain conditions we show that the risk-sensitive average cost optimality equation (Shapley equation) admits a principal eigenpair.
  We identify the principal eigenvalue as the value of the game. Then  we establish  the existence of a saddle-point equilibrium via the outer maximizing/minimizing selectors of the
  Hamiltonian associated with the Shapley equation. Additionally we give a complete characterization of all  saddle point strategies in the space of stationary Markov strategies.
  This will be explained in technical terms in the next section.


	The rest of this article is arranged as follows. Section 2 deals with problem description and preliminaries. In Section 3 we study Dirichlet eigenvalue problems. In Section 4, we show that the risk-sensitive optimality equation (i.e., Shapley equation) has a solution, obtain the value of the game and saddle-point equilibrium in the class of stationary Markov strategies. We also completely characterize all possible saddle point strategies in the class of stationary strategies in this section. In Section 5, we present an illustrative example. Section 6 concludes the paper with some remarks.
	
		\section{The game model}
	In this section we introduce a discrete-time zero-sum stochastic game model which consists of the following elements
	\begin{equation}
	\{{S}, {U},{V}, ({U}(i)\subset {U}, V(i)\subset V,i\in S), {P}( \cdot|i, u,v),{c}(i, u,v)
	\}. \label{eq 2.1}
	\end{equation}

  Here $S$ is the state space which is assumed to be the set of all nonnegative integers endowed with the discrete topology; ${U}$ and $V$ are  action spaces for players 1 and 2, respectively. The action spaces ${U}$ and $V$ are assumed to be Borel spaces with the Borel $\sigma$-algebras $\mathcal{B}({U})$ and $\mathcal{B}(V)$, respectively. For each $i\in S$, ${U}(i)\in \mathcal{B}({U})$ and $V(i)\in \mathcal{B}(V)$ denote the sets of admissible actions for players 1 and 2, respectively, when the systems is at state $i$. For any metric space $Y$, let $\mathcal{P}(Y)$ denote the space of probability measures on $\mathcal{B}(Y)$ with Prohorov topology. Next ${P}:\mathcal{K}\rightarrow \mathcal{P}(S)$ is a transition (stochastic) kernel, where $\mathcal{K}:=\{(i, u,v)|i\in S, u\in {U}(i), v\in V(i)\}$, a Borel subset of $S\times {U}\times V$\,. We assume that the function ${P}(j|i,u,v)$ is continuous in $(u,v)\in {U}(i)\times V(i)$ for any fixed $i,  j\in S$. Finally, the function ${c}:\mathcal{K} \to \mathbb{R}_{+}$ denotes the cost function which is assumed to be continuous in $(u,v)\in {U}(i)\times V(i)$\, for any fixed $i\in S$.\\
  The game evolves as follows. When the state $i\in S$ at time $t\in \mathbb{N}_0:=\{0,1,\cdots\}$,  players independently choose actions $u_t\in {U}(i)$ and  $v_t\in V(i)$ according to some strategies, respectively. As a consequence of this, the following happens:
	\begin{itemize}
		\item player 1 incurs an immediate cost  ${c}(i, u_t, v_t )$ and player 2 receives a reward  ${c}(i, u_t, v_t )$;
		\item the system moves to a new state $j\neq i$ with the probability determined by ${{P}(j|i,u_t,v_t)}$.
	\end{itemize}
	When the state of the system transits to a new state $j$, the above procedure repeats. Both the players have full information of  past and present states and past actions of both players.
	The goal of player 1 is to minimize his/her accumulated costs, whereas that of player 2 is to maximize the same with respect to some performance criterion $\mathscr{J}^{\cdot,\cdot}(\cdot,\cdot)$, which in our present case is defined by (\ref{eq 2.4}), below. At each stage, the players choose their actions on the basis of accumulated information. The available information for decision making at time $t\in \mathbb{N}_0$, i.e., the history of the process up to time $t$ is given by
	$$h_t:=(i^{'}_0,(u_0,v_0), i^{'}_1, (u_1,v_1), \cdots, i^{'}_{t-1},(u_{t-1},v_{t-1}),i^{'}_t),$$
	where $H_0=S$, $H_t=H_{t-1}\times ({U}\times V\times S),\cdots, H_\infty=(U\times V\times S)^\infty$ are the history spaces. An admissible strategy for player 1 is a sequence
	$\pi^1:=\{\pi^1_t:H_t\rightarrow {\mathcal{P}}({U})\}_{t\in \mathbb{N}_0}$ of stochastic kernels satisfying $\pi^1_t(U(X_t)|h_t) = 1$, for all $h_t\in H_t;\,\,t\geq 0$,\, where $X_t$ is the state process.
	The set of all such strategies for player 1 is denoted by $\Pi^1_{ad}$. A strategy for player 1 is called a Markov strategy i if
	$$\pi^1_t(\cdot|h_{t-1},u,v,i)=\pi^1_t(\cdot|h^{'}_{t-1},u^{'}, v^{'},i)$$
	for all $h_{t-1}, h^{'}_{t-1}\in H_{t-1}, u,u^{'}\in {U}, v,v^{'}\in V, i\in S, t\in \mathbb{N}_0$. Thus a Markov strategy for player 1 can be identified with a sequence of maps, denoted by $\pi^1\equiv\{\pi^1_t:S\rightarrow {\mathcal{P}}({U})\}_{t\in \mathbb{N}_0}$. A Markov strategy $\{\pi^1_t\}$ is called stationary Markov for player 1, if it does not have any explicit time dependence, i.e., $\pi^1_t(\cdot|h_t)=\tilde{\phi}(\cdot|i^{'}_t)$ for all $h_t\in H_t$ for some  mapping $\tilde{\phi}$ satisfying $\tilde{\phi}({U}(i)|i)=1$ for all $i\in S$. The set of all Markov strategies and all stationary Markov strategies for player 1, are denoted by $\Pi^1_{M}$ and $\Pi^1_{SM}$, respectively. Similarly, the set of all admissible strategies, Markov strategies and stationary Markov strategies for player 2 are defined similarly and denoted by $\Pi^2_{ad}$, $\Pi^2_M$, and $\Pi^2_{SM}$, respectively.
	 For each $i ,j\in  S$, $\mu\in \mathcal{P}({U}(i))$ and $\nu\in \mathcal{P}(V(i))$, the cost function $c$ and the transition kernel $P$ are extended as follows:
$$c(i,\mu,\nu):=\int_{V(i)}\int_{U(i)}{c}(i,u,v)\mu(du)\nu(dv),$$
	$$P(j|i,\mu,\nu):=\int_{V(i)}\int_{U(i)}{P}(j|i,u,v)\mu(du)\nu(dv)$$
(by an abuse of notation we use the same notation $c$ and $P$).
	For a given initial distribution $\tilde{\pi}_0\in \mathcal{P}(S)$ and a pair of strategies $(\pi^1,\pi^2)\in \Pi^1_{ad}\times \Pi^2_{ad}$, by Tulcea's Theorem (see Proposition 7.28 of \cite{BS}), there exists unique probability measure ${P}^{\pi^1,\pi^2}_{\tilde{\pi}_0}$ on $(\Omega,\mathscr{B}(\Omega))$, where $\Omega=(S\times {U}\times V)^\infty$. When $\tilde{\pi}_0=\delta_i$, $i\in S$ this probability measure is simply written by ${P}^{\pi^1,\pi^2}_{i}$ satisfying
	\interdisplaylinepenalty=0
	\begin{align}
{P}^{\pi^1,\pi^2}_i(X_0=i)=1~\text{ and}~	{P}^{\pi^1,\pi^2}_i(X_{t+1}\in {A}|H_t,\pi^1_t,\pi^2_t)=P({A}|X_t,\pi^1_t,\pi^2_t)~\forall~A\in \mathscr{B}(S).\label{eq 2.2}
	\end{align}

Let $E_{i}^{\pi^1,\pi^2} $ denote the expectation with respect to the probability measure ${P}^{\pi^1,\pi^2}_i$. Now from [\cite{H}, p. 6], we know that under any  $(\pi^1,\pi^2)\in \Pi^1_M\times \Pi^2_M$, the corresponding stochastic process $X$ is strong Markov.\\
We now introduce some notations.\\
\textbf{Notations:}\\
For any finite set $\tilde{\mathscr{D}}\subset S$, we define $\mathcal{B}_{\tilde{\mathscr{D}}} = \{f:S\to\mathbb{R}\mid \,\, f(i) = 0\,\,\, \forall \,\, i\in {\tilde{\mathscr{D}}}^c\}$,\;
$\mathcal{B}_{\tilde{\mathscr{D}}}^{+}\subset \mathcal{B}_{\tilde{\mathscr{D}}}$ denotes the cone of all nonnegative functions vanishing outside $\tilde{\mathscr{D}}.$
Given any real-valued function ${\mathcal{V}}\geq 1$ on $S$, we define a Banach space $(L^\infty_{{\mathcal{V}}},\|\cdot\|^\infty_{\mathcal{V}})$ of ${\mathcal{V}}$-weighted  functions by
$$L^\infty_{\mathcal{V}}=\biggl\{f: S\rightarrow\mathbb{R}\mid \|f\|^\infty_{\mathcal{V}}:=\sup_{i\in  S}\frac{|f(i)|}{{\mathcal{V}}(i)}< \infty\biggr\}.$$
For any ordered Banach space $\tilde{\mathcal{X}}$, a subset $\tilde{\mathcal{C}}\subset \tilde{\mathcal{X}}$ and $x,y\in \tilde{\mathcal{X}}$, we define $\succeq$ as $x\succeq y\Leftrightarrow x-y\in \tilde{\mathcal{C}}$, i.e., the partial ordering in $\tilde{\mathcal{X}}$ with respect to the cone $\tilde{\mathcal{C}}$. For any subset $\hat{\mathscr{B}}\subset S$, $\check{\tau}(\hat{\mathscr{B}})=\inf\{t:X_t\in \hat{\mathscr{B}}\}$, i.e., the first entry time of $X_t$ to $\hat{\mathscr{B}}$. Also, for any subset $\tilde{\mathscr{D}}\subset S$, $\tau({\tilde{\mathscr{D}}}):=\inf\{t>0:X_t\notin {\tilde{\mathscr{D}}}\}$ denotes the first exit time from ${\tilde{\mathscr{D}}}$.\\
We now introduce the cost criterion.\\
\textbf{Ergodic cost criterion:}
Now we define the risk-sensitive average cost criterion for zero-sum discrete-time games. Let $\theta > 0$ be the risk-sensitive parameter. For each $i\in S$ and any $(\pi^1,\pi^2) \in \Pi^1_{ad}\times \Pi^2_{ad}$, the risk-sensitive ergodic cost criterion is given by
\begin{equation}
\mathscr{J}^{\pi^1,\pi^2}(i,c):= \limsup_{T\rightarrow \infty}\frac{1}{ T}\ln E^{\pi^1,\pi^2}_i\biggl[e^{\theta \sum_{t=0}^{T-1}\ c(X_t,\pi^1_t,\pi^2_t)}\biggr].\label{eq 2.4}
\end{equation}
Since the risk-sensitive parameter remains the same throughout, we assume without loss of generality that $\theta = 1$.
The lower value and upper value of the game, are  functions on $S$, defined as \\
$\displaystyle\mathscr{L} (i):=\sup_{\pi^2\in \Pi^2_{ad}}\inf_{\pi^1\in \Pi^1_{ad}}\mathscr{J}^{\pi^1,\pi^2}(i,c)$ and $\displaystyle \mathscr{U}(i):=\inf_{\pi^1\in \Pi^1_{ad}}\sup_{\pi^2\in \Pi^2_{ad}}\mathscr{J}^{\pi^1,\pi^2}(i,c)$ respectively. It is easy to see that $$\mathscr{L}(i)\leq\mathscr{U}(i)~ \text{for all}~i\in S.$$ If $\mathscr{L}(i)=\mathscr{U}(i)$ for all $i\in S$, then the common function is called the value of the game and is denoted by $\mathscr{J}^{*}(i)$. A strategy $\pi^{*1}$ in $\Pi^1_{ad}$ is said to be optimal for player 1 if
	$$\mathscr{J}^{\pi^{*1},\pi^{2}}(i,c)\leq \sup_{\pi^2\in \Pi^2_{ad}}\inf_{\pi^1\in \Pi^1_{ad}}\mathscr{J}^{\pi^1,\pi^2}(i)=\mathscr{L}(i)~ \forall~ i\in S,~ \forall \pi^2\in \Pi^2_{ad}.$$
	Similarly, $\pi^{*2}\in \Pi^2_{ad}$ is optimal for player 2 if
	$$\mathscr{J}^{\pi^{1},\pi^{*2}}(i,c)\geq \inf_{\pi^1\in \Pi^1_{ad}}\sup_{\pi^2\in \Pi^2_{ad}}\mathscr{J}^{\pi^1,\pi^2}(i)=\mathscr{U}(i) ~ \forall~ i\in S,~ \forall \pi^1\in \Pi^1_{ad}.$$
	If $\pi^{*k}\in \Pi^k_{ad}$ is optimal for player k (k=1,2), then $(\pi^{*1},\pi^{*2})$ is called a pair of optimal strategies. The pair of strategies $(\pi^{*1},\pi^{*2})$ at which this value is attained i.e., if
	 $$	\mathscr{J}^{{\pi}^{*1},\pi^{2}}(i,c) \leq  \mathscr{J}^{{\pi}^{*1},{\pi}^{*2}}(i,c) \leq \mathscr{J}^{\pi^1,{\pi}^{*2}}(i,c),\,~\forall \pi^1\in \Pi^1_{ad},~\forall \pi^2\in \Pi^2_{ad},$$
	then the pair $(\pi^{*1},\pi^{*2})$ is called a saddle-point equilibrium, and then $\pi^{*1}$ and $\pi^{*2}$ are optimal for player 1 and player 2, respectively.\\
Following \cite{BG}, the Shapley equation equation for the above problem is given by
\begin{align}
e^{\rho}\psi(i)&=\sup_{\nu\in \mathcal{P}(V(i))}\inf_{\mu\in \mathcal{P}(U(i))}\bigg[e^{c(i,\mu,\nu)}\sum_{j\in S}\psi(j) P(j|i,\mu,\nu)\bigg]\nonumber\\
&=\inf_{\mu\in \mathcal{P}(U(i))}\sup_{\nu\in \mathcal{P}(V(i))}\bigg[e^{c(i,\mu,\nu)}\sum_{j\in S}\psi(j)P(j|i,\mu,\nu)\bigg], ~i\in S \nonumber.
\end{align}
In the above equation, $\rho$ is a scalar and $\psi$ is an appropriate function.

Our goal is to establish the existence of a saddle-point equilibrium among the class of admissible history-dependent strategies and provide its complete characterization.
We now describe briefly our technique for establishing the existence of a saddle-point equilibrium. We first construct an increasing sequence of bounded subsets of the state space $S$. Then we apply Krein-Rutman theorem \cite{A1} on each bounded subset to obtain a bounded solution of the corresponding  Dirichlet eigenvalue problem, i.e., a solution to the above equation on each finite subset with the condition that the solution is zero in the complement of that subset . Using a suitable Lyapunov stability condition
(to be stated shortly), we pass to the limit and show that risk-sensitive zero sum ergodic optimality equation admits a principal eigenpair. Subsequently we establish a stochastic representation of the principal eigenfunction. This enables us to characterize all possible saddle point equilibria in the space of stationary Markov strategies. To this end we make certain assumptions.
First we define a norm-like function which is used in our assumptions.
\begin{definition}
		A function $f:S\rightarrow\mathbb{R}$ is said to be norm-like if for every $k\in \mathbb{R}$, the set $\{i:f(i)\leq k\}$ is either empty or finite.
	\end{definition}
 Since the  cost function (i.e., ${c}(i,  u,v)$ ) may be unbounded, to guarantee the finiteness of $\mathscr{J}^{ \pi^1,\pi^2}(i,c)$, we use the following assumption.
\begin{assumption}\label{assm 2.2}
	We assume that the Markov chain $\{X_t\}_{t\geq 0}$ is irreducible under every  pair of stationary Markov strategies $(\pi^1,\pi^2)\in \Pi_{SM}^1 \times \Pi_{SM}^2$.
Also, suppose there exist a constant $\tilde{C}>0$, a real-valued function $\mathcal{W}\geq 1  $ on $S$ and, a finite set $\tilde{\mathscr{K}}$ such that one of the following hold.
	\begin{itemize}
		\item[(a)]\textbf{If the running cost is bounded:} For some positive constant $\tilde{\gamma} > \|c\|_{\infty}$, we have the following blanket stability condition
		\begin{align}
	\sup_{(u,v)\in U(i)\times V(i)}\sum_{j\in S}\mathcal{W}(j){P}(j|i,u,v)\leq \tilde{C} I_{\tilde{\mathscr{K}}}(i)+e^{-\tilde{\gamma}} \mathcal{W}(i) ~\forall i\in S,\label{eq 2.5}
		\end{align}
	where $\|{c}\|_\infty:=\displaystyle \sup_{(i,u,v)\in \mathcal{K}}{c}(i,u,v)$.
		\item[(b)]\textbf{If the running cost is unbounded:} For some real-valued nonnegative norm-like function $\tilde{\ell}$ on $S$ it holds that
		\begin{align}
	\sup_{(u,v)\in U(i)\times V(i)}\sum_{j\in S}\mathcal{W}(j){P}(j|i,u,v)\leq \tilde{C} I_{\tilde{\mathscr{K}}}(i)+e^{-\tilde{\ell}(i)}\mathcal{W}(i) ~\forall i\in S,\label{eq 2.6}
		\end{align}
		where the function $\displaystyle  \tilde{\ell}(\cdot)-\max_{(u,v)\in U(\cdot)\times V(\cdot)}{c}(\cdot,u,v)\;$ is norm-like.
	\end{itemize}
\end{assumption}
The condition (\ref{eq 2.6}) is useful to treat problems with unbounded running cost. We prove that, (\ref{eq 2.6}) implies (\ref{eq 2.4}) is finite. Similar condition is also used in [\cite{BM}, Theorem 1.2],  [\cite{BG3}, Theorem 2.2] in the study of multiplicative ergodicity. Also, see \cite{WC5}.

We wish to establish the existence of a saddle point equilibrium in the space of stationary Markov strategies. For the existence of saddle point equilibrium, we make the following assumptions.
	\begin{assumption}\label{assm 2.3}
	\begin{enumerate}	
		\item [(i)] The admissible action spaces $U(i)(\subset U)$ and $V(i)(\subset V)$ are compact for each $i\in S$.
		
		\item [(ii)] There exists $i_0\in S$ such that ${P}(j|i_0,u,v)>0$ for all $j\neq i_0$ and $(u,v)\in U(i_0)\times V(i_0)$.
	
		\item [(iii)]  $(i,u,v)\rightarrow\displaystyle \sum_{j\in S}{\mathcal{W}}(j){P}(j|i,u,v)$ is continuous in $(u,v)\in U(i)\times V(i)$, where $\mathcal{W}$ is the function defined in Assumption~\ref{assm 2.2}\,. 		
			\end{enumerate}	
\end{assumption}
\begin{remark}
(1) Assumption \ref{assm 2.3} (i) and (iii) are standard continuity-compactness assumption.

(2)  Under Assumption \ref{assm 2.3} (i), for each $i\in S$, by Proposition 7.22 in [\cite{BS} p. 130], we know that $ \mathcal{P}({U}(i))$ and $ \mathcal{P}(V(i))$ are compact and metrizable. Note that $\pi^1 \in {\Pi_{SM}^1}$ can be identified with a map $\pi^1: S \to {\mathcal{P}}({U})$ such that  $ \pi^1(\cdot|i) \in {\mathcal{P}}({U}(i))$ for each $i \in S$. Thus, we have $\displaystyle \Pi_{SM}^1=\Pi_{i\in S} {\mathcal{P}}({U}(i))$. Similarly, $\displaystyle \Pi_{SM}^2=\Pi_{i\in S} {\mathcal{P}}(V(i))$.  Therefore by Tychonoff theorem, the sets ${\Pi_{SM}^1}$ and ${\Pi_{SM}^2}$ are compact metric spaces endowed with the  product topology. Also, it is clear that these spaces are locally convex topological vector spaces.

(3)  Assumption \ref{assm 2.3} (ii) will be used to show that the limit of the sequence of Dirichlet eigenfunctions does not vanish in the limit (see Lemma \ref{L2.4} below). It is also possible to consider other type of condition instead of Assumption \ref{assm 2.3} (ii). We refer to Remark \ref{R1} for further discussion.
\end{remark}

Using generalized Fatou's lemma as in [\cite{HL}, Lemma 8.3.7], from Assumption~\ref{assm 2.3} one can easily get the following result, which will be used in subsequent sections; we omit the details.
\begin{lemma}\label{lemm 3.1}
	Under Assumptions \ref{assm 2.2} and \ref{assm 2.3}, the functions
	$\sum_{j\in S}P(j|i,\mu,\nu)f(j)$ and $c(i,\mu,\nu)$  are continuous  at $(\mu,\nu)$ on $\mathcal{P}(U(i))\times \mathcal{P}(V(i))$ for each fixed $f \in L_{\mathcal{W}}^{\infty}$ and $i\in S$.
\end{lemma}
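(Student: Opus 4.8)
The plan is to prove both continuity claims by reducing them, for each fixed $i \in S$, to the assertion that a suitable integrand is a bounded continuous function on the compact product space $U(i) \times V(i)$, and then to invoke weak convergence of measures. Fix $i \in S$ and take a sequence $(\mu_n, \nu_n) \to (\mu,\nu)$ in $\mathcal{P}(U(i)) \times \mathcal{P}(V(i))$ in the Prohorov topology; by Assumption~\ref{assm 2.3}(i) these spaces are compact and metrizable, so it suffices to argue along such sequences. Since $\mu_n \Rightarrow \mu$ and $\nu_n \Rightarrow \nu$ imply $\mu_n \times \nu_n \Rightarrow \mu \times \nu$ weakly on $U(i)\times V(i)$ (weak convergence of product measures follows from weak convergence of the marginals), both claims take the form: for a bounded continuous $\Phi$ on $U(i)\times V(i)$, $\int \Phi \, d(\mu_n \times \nu_n) \to \int \Phi \, d(\mu \times \nu)$.

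For the cost term I would take $\Phi(u,v) = c(i,u,v)$. By hypothesis $c(i,\cdot,\cdot)$ is continuous on $U(i)\times V(i)$, which is compact, hence bounded; therefore $c(i,\mu_n,\nu_n) \to c(i,\mu,\nu)$, giving joint continuity of $(\mu,\nu)\mapsto c(i,\mu,\nu)$.

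The substantive step is the transition term. Fix $f \in L^\infty_{\mathcal{W}}$ and set $g(u,v) := \sum_{j\in S} P(j|i,u,v) f(j)$ and $h(u,v) := \sum_{j\in S} P(j|i,u,v)\mathcal{W}(j)$, with truncations $g_N(u,v) := \sum_{j\le N}P(j|i,u,v)f(j)$ and $h_N(u,v) := \sum_{j\le N}P(j|i,u,v)\mathcal{W}(j)$. Each $g_N, h_N$ is a finite sum of functions continuous in $(u,v)$ (by continuity of $P(j|i,\cdot,\cdot)$), hence continuous; the $h_N$ increase to $h$, and $h$ is continuous by Assumption~\ref{assm 2.3}(iii). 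By Dini's theorem on the compact set $U(i)\times V(i)$, $h_N \to h$ uniformly. Since $|f(j)| \le \|f\|^\infty_{\mathcal{W}}\,\mathcal{W}(j)$, this yields the uniform tail estimate $\sup_{(u,v)}|g - g_N| \le \|f\|^\infty_{\mathcal{W}}\,\sup_{(u,v)}(h - h_N) \to 0$, so $g_N \to g$ uniformly and $g$ is continuous and bounded on $U(i)\times V(i)$. Taking $\Phi = g$ in the weak-convergence step gives $\sum_j P(j|i,\mu_n,\nu_n)f(j) \to \sum_j P(j|i,\mu,\nu)f(j)$, which is the claimed joint continuity.

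The only delicate point is the uniform control of the infinite tail $\sum_{j>N}P(j|i,u,v)f(j)$; this is precisely where Assumption~\ref{assm 2.3}(iii), i.e.\ continuity of the full $\mathcal{W}$-weighted transition, combines with compactness of $U(i)\times V(i)$ to let Dini's theorem upgrade pointwise to uniform convergence. Equivalently, one may bypass the Dini argument and apply the generalized Fatou lemma of [\cite{HL}, Lemma 8.3.7] to $g^{+}$ and $g^{-}$ with dominating envelope $\|f\|^\infty_{\mathcal{W}}\,h$, obtaining $\limsup_n \int g\, d(\mu_n\times\nu_n) \le \int g\, d(\mu\times\nu)$ together with the reverse inequality, exactly as the paper indicates.
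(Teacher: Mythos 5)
Your proof is correct, but it is genuinely more detailed than, and partly different from, what the paper does: the paper omits the argument entirely, simply asserting that the lemma follows from the generalized Fatou lemma of [\cite{HL}, Lemma 8.3.7] together with Assumption~\ref{assm 2.3}. Your primary route instead proves outright that the integrand $g(u,v)=\sum_{j}P(j|i,u,v)f(j)$ is a bounded \emph{continuous} function on the compact set $U(i)\times V(i)$, via the truncations $g_N$, the domination $|g-g_N|\le \|f\|^{\infty}_{\mathcal{W}}(h-h_N)$, and Dini's theorem applied to $h_N\uparrow h$ (where continuity of $h$ is exactly Assumption~\ref{assm 2.3}(iii)); continuity in $(\mu,\nu)$ then reduces to the elementary fact that integration of a fixed bounded continuous function is continuous under weak convergence of product measures. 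This buys a self-contained, elementary proof that makes transparent precisely where Assumption~\ref{assm 2.3}(iii) and compactness enter, whereas the paper's route (which you also sketch as your alternative, applying the generalized Fatou lemma to $g^{\pm}$ with envelope $\|f\|^{\infty}_{\mathcal{W}}\,h$) is shorter once the cited lemma is granted and does not need the Dini upgrade from pointwise to uniform convergence. One small step you should state explicitly in either route: the identification $\sum_{j}P(j|i,\mu_n,\nu_n)f(j)=\int g\,d(\mu_n\times\nu_n)$ requires interchanging the sum over $j$ with the integral, which is justified by Fubini--Tonelli since Assumption~\ref{assm 2.2} gives $\sup_{(u,v)}\sum_{j}\mathcal{W}(j)P(j|i,u,v)<\infty$ for each fixed $i$; this is routine but is the only place where the finiteness in Assumption~\ref{assm 2.2} (as opposed to its continuity counterpart in Assumption~\ref{assm 2.3}(iii)) is actually used.
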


\section{Dirichlet eigenvalue problems}
We begin this section by stating
 a version of the nonlinear Krein-Rutman theorem from [\cite{A1}, Section 3.1] which plays a crucial role in our analysis of the Dirichlet eigenvalue problems.
\begin{thm}\label{theo 2.1}
	Let $\tilde{\mathcal{X}}$ be an ordered Banach space and $\tilde{\mathcal{C}} $  a nonempty closed subset of $\tilde{\mathcal{X}}$ satisfying $\tilde{\mathcal{X}}=\tilde{\mathcal{C}}-\tilde{\mathcal{C}}$. Let $\tilde{T}:\tilde{\mathcal{X}}\rightarrow\tilde{\mathcal{X}}$ be a 1-homogeneous, order-preserving, continuous, and compact map satisfying the property that for some nonzero $\zeta\in \tilde{\mathcal{C}}$ and $\hat{N}>0$, we have $\hat{N}\tilde{T}(\zeta)\succeq \zeta$. Then there exists a nontrivial $\hat{f}\in \tilde{\mathcal{C}}$ and a scalar $\tilde{\lambda}>0$, such that $\tilde{T}\hat{f}=\tilde{\lambda}\hat{f}$.
\end{thm}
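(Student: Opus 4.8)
The plan is to obtain the eigenpair as a limit of eigenpairs of strictly positive perturbations of $\tilde{T}$, thereby sidestepping the possibility that $\tilde{T}$ annihilates parts of the cone. For $\epsilon\in(0,1]$ I would set $\tilde{T}_\epsilon(x):=\tilde{T}(x)+\epsilon\|x\|\zeta$. Since $1$-homogeneity gives $\tilde{T}(0)=0$ and order-preservation then yields $\tilde{T}(\tilde{\mathcal{C}})\subseteq\tilde{\mathcal{C}}$, the perturbation $\tilde{T}_\epsilon$ is again $1$-homogeneous, order-preserving, continuous and compact, maps $\tilde{\mathcal{C}}$ into itself, and satisfies $\tilde{T}_\epsilon x\succeq\epsilon\|x\|\zeta$, so that $\tilde{T}_\epsilon x\neq 0$ for every $x\neq0$. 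Because $\tilde{T}_\epsilon$ is $1$-homogeneous, an eigenvector in $\tilde{\mathcal{C}}$ is exactly a fixed point of the normalized map $N_\epsilon(x):=\tilde{T}_\epsilon x/\|\tilde{T}_\epsilon x\|$ on the sphere $\mathcal{S}:=\{x\in\tilde{\mathcal{C}}:\|x\|=1\}$.

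For each fixed $\epsilon$ I would then produce a nontrivial fixed point of $N_\epsilon$, i.e.\ a pair $(\lambda_\epsilon,x_\epsilon)$ with $x_\epsilon\in\tilde{\mathcal{C}}$, $\|x_\epsilon\|=1$, $\lambda_\epsilon=\|\tilde{T}_\epsilon x_\epsilon\|>0$ and $\tilde{T}_\epsilon x_\epsilon=\lambda_\epsilon x_\epsilon$. This is the step I expect to be the main obstacle, since $\mathcal{S}$ is not convex and Schauder's theorem does not apply to $N_\epsilon$ directly. The clean device is the fixed-point index (Leray--Schauder degree) for compact maps on the cone $\tilde{\mathcal{C}}$, as developed in \cite{A1}: compactness of $\tilde{T}_\epsilon$ gives index $1$ on a sufficiently small ball, while the order bound $\hat{N}\tilde{T}\zeta\succeq\zeta$ provides the cone-expansion estimate that forces the index on a large ball to vanish; additivity of the index over the intervening annulus then yields a fixed point there, necessarily nonzero. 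Normalizing it to unit norm gives $(\lambda_\epsilon,x_\epsilon)$.

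The next step is to bound $\lambda_\epsilon$ uniformly in $\epsilon$. The upper bound is immediate: compactness of $\tilde{T}$ makes it bounded on the unit ball, so $\lambda_\epsilon=\|\tilde{T}_\epsilon x_\epsilon\|\le\sup_{\|x\|\le1}\|\tilde{T}x\|+\|\zeta\|=:M<\infty$. The lower bound is where the hypothesis $\hat{N}\tilde{T}\zeta\succeq\zeta$ is indispensable. Rewriting the eigen-relation as $\lambda_\epsilon x_\epsilon=\tilde{T}x_\epsilon+\epsilon\zeta$ and using $\tilde{T}x_\epsilon\in\tilde{\mathcal{C}}$ gives $x_\epsilon\succeq(\epsilon/\lambda_\epsilon)\zeta$. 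Applying $\tilde{T}$, and using order-preservation, $1$-homogeneity and $\tilde{T}\zeta\succeq\hat{N}^{-1}\zeta$, one bootstraps this into $x_\epsilon\succeq a_n\zeta$ with $a_0=\epsilon/\lambda_\epsilon$ and $a_{n+1}=(\hat{N}\lambda_\epsilon)^{-1}a_n+\epsilon/\lambda_\epsilon$. If $\lambda_\epsilon\le1/\hat{N}$ this recursion diverges, $a_n\to\infty$; dividing $x_\epsilon\succeq a_n\zeta$ by $a_n$ and letting $n\to\infty$ forces $-\zeta\in\tilde{\mathcal{C}}$ by closedness, so $\zeta\in\tilde{\mathcal{C}}\cap(-\tilde{\mathcal{C}})=\{0\}$, contradicting $\zeta\neq0$. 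Hence $\lambda_\epsilon>1/\hat{N}$ for every $\epsilon$.

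Finally I would let $\epsilon\downarrow0$. Since $\lambda_\epsilon\in(1/\hat{N},M]$, a subsequence satisfies $\lambda_\epsilon\to\tilde{\lambda}\ge1/\hat{N}>0$. Writing $x_\epsilon=\lambda_\epsilon^{-1}(\tilde{T}x_\epsilon+\epsilon\zeta)$ and invoking compactness of $\tilde{T}$ on the unit-norm family $\{x_\epsilon\}$, a further subsequence gives $x_\epsilon\to\hat{f}$ with $\|\hat{f}\|=1$, so $\hat{f}\neq0$ and $\hat{f}\in\tilde{\mathcal{C}}$ as $\tilde{\mathcal{C}}$ is closed. Passing to the limit in $\lambda_\epsilon x_\epsilon=\tilde{T}x_\epsilon+\epsilon\zeta$ using continuity of $\tilde{T}$ yields $\tilde{T}\hat{f}=\tilde{\lambda}\hat{f}$ with $\tilde{\lambda}>0$, as required. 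Of the four steps, only the fixed-point-index construction is genuinely delicate; the limiting argument is routine compactness, and the lower bound is precisely where the structural assumption on $\zeta$ earns its keep.
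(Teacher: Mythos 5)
First, a point of reference: the paper does not prove this theorem at all --- it is stated as a quotation of the nonlinear Krein--Rutman theorem from [\cite{A1}, Section~3.1] and used as a black box --- so your argument can only be judged on its own terms. Judged that way, your overall architecture (perturb, extract an eigenpair $(\lambda_\epsilon,x_\epsilon)$ with $\|x_\epsilon\|=1$, prove uniform bounds $1/\hat{N}<\lambda_\epsilon\le M$, pass to the limit by compactness) is sound, and your steps 3 and 4 are correct as written: the bootstrap $x_\epsilon\succeq a_n\zeta$ with $a_{n+1}=(\hat{N}\lambda_\epsilon)^{-1}a_n+\epsilon/\lambda_\epsilon$, the divergence of $a_n$ when $\lambda_\epsilon\le 1/\hat{N}$, and the contradiction $-\zeta\in\tilde{\mathcal{C}}$ via closedness and pointedness are exactly the right use of the hypothesis $\hat{N}\tilde{T}(\zeta)\succeq\zeta$. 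One harmless blemish in step 1: $\tilde{T}_\epsilon$ is generally \emph{not} order-preserving, since $x\succeq y$ does not imply $\|x\|\ge\|y\|$ in an ordered Banach space (that would require normality of the cone); fortunately you never use this property.

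The genuine gap is step 2, and the device you propose there fails. The Krasnoselskii compression/expansion scheme you sketch (index $1$ on a small ball, index $0$ on a large ball, a fixed point in the annulus) produces fixed points of $\tilde{T}_\epsilon$ itself, i.e.\ eigenvectors with eigenvalue exactly $1$, and these need not exist: already $\tilde{\mathcal{X}}=\mathbb{R}$, $\tilde{\mathcal{C}}=[0,\infty)$, $\tilde{T}x=2x$, $\zeta=1$, $\hat{N}=1$ satisfies every hypothesis, yet $\tilde{T}_\epsilon x=(2+\epsilon)x$ has no nonzero fixed point. The scheme is structurally incompatible with $1$-homogeneity: conjugating by the dilation $x\mapsto sx$ shows that the fixed-point index of a $1$-homogeneous map on $B_r\cap\tilde{\mathcal{C}}$ (whenever defined) is the same for every $r$, so it can never be $1$ on small balls and $0$ on large ones. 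Moreover, the claim that $\hat{N}\tilde{T}\zeta\succeq\zeta$ supplies the cone-expansion estimate is false: expansion needs the norm inequality $\|\tilde{T}_\epsilon x\|\ge\|x\|$ on an entire sphere of the cone, and an order inequality along the single ray through $\zeta$ gives no norm information without normality; that hypothesis is needed only in your step 3, where you use it correctly. (Also, \cite{A1} does not develop a fixed-point index for cones, so the citation does not supply the missing tool.) The repair is simpler than what you attempt: Schauder \emph{does} apply directly. Work on the convex set $K=\{x\in\tilde{\mathcal{C}}:\|x\|\le 1\}$ and define $A_\epsilon(x)=(\tilde{T}x+\epsilon\zeta)/\|\tilde{T}x+\epsilon\zeta\|$. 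Since $\tilde{T}(\tilde{\mathcal{C}})\subseteq\tilde{\mathcal{C}}$ (from $\tilde{T}0=0$ and order preservation) and the closure of $(\tilde{T}+\epsilon\zeta)(K)$ is compact and cannot contain $0$ (else $-\epsilon\zeta\in\tilde{\mathcal{C}}$, contradicting pointedness), the denominator is bounded below by a positive constant on $K$; hence $A_\epsilon$ is well defined, continuous, compact, and maps $K$ into $K$, its image lying in the unit-sphere portion of the cone --- Schauder requires convexity of the domain, not of the image. Any fixed point $x_\epsilon$ automatically has $\|x_\epsilon\|=1$ and satisfies $\lambda_\epsilon x_\epsilon=\tilde{T}x_\epsilon+\epsilon\zeta$ with $\lambda_\epsilon=\|\tilde{T}x_\epsilon+\epsilon\zeta\|>0$, which is precisely the input your steps 3 and 4 need. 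If you insist on an index-theoretic route, the correct statement is the cone version of the Birkhoff--Kellogg theorem: fix one ball and homotope through $t\tilde{T}_\epsilon$, $t\in[0,t_0]$ --- the homotopy parameter is the scaling of the map, not the radius of the ball.
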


 Let $i_0\in S$ be the reference state as in Assumption 2.2 (ii). Consider an increasing sequence of finite subsets $\tilde{\mathscr{D}}_n\subset S$ such that $\cup_{n=1}^{\infty}\tilde{\mathscr{D}}_n=S$ and $i_0\in \tilde{\mathscr{D}}_n$ for all $n\in \mathbb{N}$. The following lemma plays a crucial role in our analysis.
\begin{lemma}\label{lemm 2.4}
Suppose that Assumption \ref{assm 2.2} holds. Let $\tilde{\mathscr{B}}\supset \tilde{\mathscr{K}}$ be a finite subset of $S$ and let $\check{\tau}(\tilde{\mathscr{B}})=\inf\{t:X_t\in \tilde{\mathscr{B}}\}$, be the first entry time of $X_t$ to $\tilde{\mathscr{B}}$. Then for any pair of strategies $(\pi^1,\pi^2)\in \Pi^1_{ad}\times \Pi^2_{ad}$ we have the following:
	\begin{enumerate}
		\item [(i)] If Assumption \ref{assm 2.2} (a) holds: Then
		\begin{align}
		E^{\pi^1,\pi^2}_i\bigg[e^{\tilde{\gamma} \check{\tau}(\tilde{\mathscr{B}})}\mathcal{W}(X_{\check{\tau}(\tilde{\mathscr{B}})})\bigg]\leq \mathcal{W}(i)~ \forall~i\in {\tilde{\mathscr{B}}}^c.\label{eq 2.21}
		\end{align}
		\item [(ii)] If Assumption \ref{assm 2.2} (b) holds:
		\begin{align}
		E^{\pi^1,\pi^2}_i\bigg[e^{\sum_{s=0}^{\check{\tau}(\tilde{\mathscr{B}})-1}\tilde{\ell}(X_s) }\mathcal{W}(X_{\check{\tau}(\tilde{\mathscr{B}})})\bigg]\leq \mathcal{W}(i)~\forall~i\in {\tilde{\mathscr{B}}}^c.\label{eq 2.22}
		\end{align}
			\end{enumerate}
		\end{lemma}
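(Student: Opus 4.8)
The plan is to realize each of the two bounds as a consequence of a suitable stopped supermartingale, with the drift inequality of Assumption~\ref{assm 2.2} supplying the one-step decrease. Throughout write $\check\tau := \check\tau(\tilde{\mathscr{B}})$ and let $\{\mathscr{F}_t\}_{t\geq 0}$ denote the natural filtration generated by the history of the game under a fixed pair $(\pi^1,\pi^2)\in\Pi^1_{ad}\times\Pi^2_{ad}$. The first observation is that, because $\tilde{\mathscr{B}}\supset\tilde{\mathscr{K}}$, every state $i\in\tilde{\mathscr{B}}^c$ satisfies $I_{\tilde{\mathscr{K}}}(i)=0$; hence on $\tilde{\mathscr{B}}^c$ the inequalities \eqref{eq 2.5} and \eqref{eq 2.6} sharpen to the pure geometric-drift forms
\begin{align}
\sum_{j\in S}\mathcal{W}(j)P(j|i,u,v)\leq e^{-\tilde\gamma}\mathcal{W}(i) \qquad\text{resp.}\qquad \sum_{j\in S}\mathcal{W}(j)P(j|i,u,v)\leq e^{-\tilde\ell(i)}\mathcal{W}(i),\nonumber
\end{align}
uniformly over $(u,v)\in U(i)\times V(i)$, and therefore, after integrating against the action distributions prescribed by the strategies, uniformly over all $(\mu,\nu)\in\mathcal{P}(U(i))\times\mathcal{P}(V(i))$. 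This last point is what makes the argument work for arbitrary history-dependent strategies: even though we cannot invoke the Markov property, the drift estimate holds for every admissible action pair, and by \eqref{eq 2.2} the conditional law of $X_{t+1}$ given $\mathscr{F}_t$ is exactly $P(\cdot|X_t,\pi^1_t,\pi^2_t)$.

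For part (i) I would introduce the stopped process $Y_t := e^{\tilde\gamma(t\wedge\check\tau)}\mathcal{W}(X_{t\wedge\check\tau})$ and claim it is a nonnegative $\{\mathscr{F}_t\}$-supermartingale under $P^{\pi^1,\pi^2}_i$ for $i\in\tilde{\mathscr{B}}^c$. To verify this one splits on the $\mathscr{F}_t$-measurable event $\{t<\check\tau\}$ and its complement: on $\{t\geq\check\tau\}$ the process is frozen, so $E[Y_{t+1}\mid\mathscr{F}_t]=Y_t$; on $\{t<\check\tau\}$ one has $X_t\in\tilde{\mathscr{B}}^c$ and $Y_{t+1}=e^{\tilde\gamma(t+1)}\mathcal{W}(X_{t+1})$, whence
\begin{align}
E\big[Y_{t+1}\mid\mathscr{F}_t\big]=e^{\tilde\gamma(t+1)}\sum_{j\in S}\mathcal{W}(j)P(j|X_t,\pi^1_t,\pi^2_t)\leq e^{\tilde\gamma(t+1)}e^{-\tilde\gamma}\mathcal{W}(X_t)=Y_t,\nonumber
\end{align}
using the geometric-drift bound on $\tilde{\mathscr{B}}^c$. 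Consequently $E^{\pi^1,\pi^2}_i[Y_t]\leq Y_0=\mathcal{W}(i)$ for all $t$. Letting $t\to\infty$ and applying Fatou's lemma, together with $Y_t\to e^{\tilde\gamma\check\tau}\mathcal{W}(X_{\check\tau})$ on $\{\check\tau<\infty\}$, yields \eqref{eq 2.21}; moreover, since $\mathcal{W}\geq 1$ forces $Y_t\geq e^{\tilde\gamma t}\to\infty$ on $\{\check\tau=\infty\}$, the finiteness of $\mathcal{W}(i)$ simultaneously shows $\check\tau<\infty$ almost surely, so the limit indeed equals the claimed integrand.

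For part (ii) the construction is identical with $Y_t$ replaced by $Z_t := e^{\sum_{s=0}^{(t\wedge\check\tau)-1}\tilde\ell(X_s)}\mathcal{W}(X_{t\wedge\check\tau})$ (empty sum $=0$), the factor $e^{-\tilde\gamma}$ in the one-step estimate being replaced by $e^{-\tilde\ell(X_t)}$, which cancels the newly added term $e^{\tilde\ell(X_t)}$ in the exponent and again gives $E[Z_{t+1}\mid\mathscr{F}_t]\leq Z_t$ on $\{t<\check\tau\}$; thus $E^{\pi^1,\pi^2}_i[Z_t]\leq\mathcal{W}(i)$. Here the cleanest passage to the limit is via monotone convergence: since $\tilde\ell\geq 0$, the variables $Z_t\mathbf{1}_{\{\check\tau\leq t\}}=e^{\sum_{s=0}^{\check\tau-1}\tilde\ell(X_s)}\mathcal{W}(X_{\check\tau})\mathbf{1}_{\{\check\tau\leq t\}}$ increase in $t$ to $e^{\sum_{s=0}^{\check\tau-1}\tilde\ell(X_s)}\mathcal{W}(X_{\check\tau})\mathbf{1}_{\{\check\tau<\infty\}}$, and their expectations are dominated by $E[Z_t]\leq\mathcal{W}(i)$, giving \eqref{eq 2.22}. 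The step I expect to require the most care is precisely this limit on the event $\{\check\tau=\infty\}$: in part (i) the explicit time-exponential $e^{\tilde\gamma t}$ forces $\check\tau<\infty$ for free, whereas in part (ii) the running weight $\sum_s\tilde\ell(X_s)$ need not blow up pathwise, so one must either establish finiteness of $\check\tau$ separately or, as above, phrase the bound through monotone convergence so that the $\{\check\tau=\infty\}$ contribution is harmlessly discarded. The remaining bookkeeping---measurability of $\{t<\check\tau\}$, correctness of the frozen increment, and the identification of $\lim_t Y_{t\wedge\check\tau}$---is routine.
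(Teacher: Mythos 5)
Your proof is correct. Note that the paper itself writes out no argument for this lemma: it simply cites Lemma 2.3 of \cite{BP} (the one-controller case) and asserts that the two-controller case is analogous. The argument being invoked there builds a Dynkin-type martingale from the drift inequality, applies the optional sampling theorem at the stopping time $t\wedge\check{\tau}(\tilde{\mathscr{B}})\wedge\tau(\tilde{\mathscr{D}}_n)$ with truncated Lyapunov functions $\mathcal{W}_m$, and then removes the truncation and the localization through repeated applications of Fatou's lemma (first $m\to\infty$, then $T\to\infty$, then $n\to\infty$). Your route reaches the same estimates more directly: you verify by hand that the stopped processes $Y_t$ and $Z_t$ are nonnegative supermartingales, the key observation being that since $\tilde{\mathscr{B}}\supset\tilde{\mathscr{K}}$ the inequalities \eqref{eq 2.5}--\eqref{eq 2.6} hold on $\tilde{\mathscr{B}}^c$ in pure geometric-drift form, uniformly over $(u,v)$ and hence over all randomized action pairs, so that by \eqref{eq 2.2} the one-step decrease holds under arbitrary history-dependent strategies with no appeal to any Markov property; a single application of Fatou's lemma (part (i)) or monotone convergence (part (ii)) then finishes. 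What your version buys: no optional sampling theorem, no truncation $\mathcal{W}_m$, no localization by exit times from $\tilde{\mathscr{D}}_n$, and in case (a) the almost-sure finiteness of $\check{\tau}(\tilde{\mathscr{B}})$ comes out as a free byproduct of the bound $E[Y_t]\le\mathcal{W}(i)$. Your handling of the event $\{\check{\tau}(\tilde{\mathscr{B}})=\infty\}$ in part (ii) via monotone convergence is also the right reading of \eqref{eq 2.22} (the integrand is understood on $\{\check{\tau}(\tilde{\mathscr{B}})<\infty\}$); this is the same convention implicit in the Fatou-based limits of the cited proof, so the two arguments prove literally the same inequality.
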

	\begin{proof}
This result is proved in Lemma 2.3 in \cite{BP} for  one controller case. The proof for two controller case is analogous.

		\end{proof}
%
%
Now we prove the following existence result which is useful in establishing the existence of a Dirichlet eigenpair.
\begin{proposition}\label{prop 2.1}
	Suppose Assumption \ref{assm 2.3} holds.
Take any function $\bar{c}:\mathscr{K}\rightarrow \mathbb{R}$ which is continuous in $(u,v)\in U(i)\times V(i)$ for each fixed $i\in S$ satisfying the relation $\bar{c}<-\delta$ in $\tilde{\mathscr{D}}_n$, where $\delta>0$ is a constant and $\tilde{\mathscr{D}}_n$ is a finite set as described previously. Then for any $g\in \mathcal{B}_{\tilde{\mathscr{D}}_n}$, there exits a unique solution $\varphi\in \mathcal{B}_{\tilde{\mathscr{D}}_n}$ to the following nonlinear equation
\begin{align}
\varphi(i)&=\inf_{\mu\in \mathcal{P}(U(i))}\sup_{\nu\in \mathcal{P}(V(i))}\biggl[e^{\bar{c}(i,\mu,\nu)}\sum_{j\in S}\varphi(j)P(j|i,\mu,\nu)+g(i)\biggr]\nonumber\\
&=\sup_{\nu\in \mathcal{P}(V(i))}\inf_{\mu\in \mathcal{P}(U(i))}\biggl[e^{\bar{c}(i,\mu,\nu)}\sum_{j\in S}\varphi(j)P(j|i,\mu,\nu)+g(i)\biggr]~\forall i\in \tilde{\mathscr{D}}_n.\nonumber\\
\label{eq 2.7}
\end{align}
Moreover, we have
\interdisplaylinepenalty=0
\begin{align}
\varphi(i)&=\inf_{\pi^1\in \Pi^1_{ad}}\sup_{\pi^2\in \Pi^2_{ad}}E^{\pi^1,\pi^2}_i\biggl[\sum_{t=0}^{\tau(\tilde{\mathscr{D}}_n)-1}e^{\sum_{s=0}^{t-1}\bar{c}(X_s,\pi^1_s,\pi^2_s)}g(X_t)\biggr]\nonumber\\
&=\sup_{\pi^2\in \Pi^2_{ad}}\inf_{\pi^1\in \Pi^1_{ad}}E^{\pi^1,\pi^2}_i\biggl[\sum_{t=0}^{\tau(\tilde{\mathscr{D}}_n)-1}e^{\sum_{s=0}^{t-1}\bar{c}(X_s,\pi^1_s,\pi^2_s)}g(X_t)\biggr]~\forall i\in S,\label{eq 2.8}
\end{align} where $\tau(\tilde{\mathscr{D}}_n):=\inf\{t>0:X_t\notin \tilde{\mathscr{D}}_n\}$, first exit time from $\tilde{\mathscr{D}}_n$.
\end{proposition}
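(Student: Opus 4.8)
The plan is to read \eqref{eq 2.7} as the fixed-point equation for a ``discounted'' Shapley operator on the \emph{finite-dimensional} space $\mathcal{B}_{\tilde{\mathscr{D}}_n}$ and to solve it by a contraction argument, after which \eqref{eq 2.8} is obtained by a dynamic-programming / optional-stopping identification. First I would fix $i\in\tilde{\mathscr{D}}_n$ and, for $f\in\mathcal{B}_{\tilde{\mathscr{D}}_n}$, study the one-stage payoff $(\mu,\nu)\mapsto e^{\bar c(i,\mu,\nu)}\sum_{j\in S}f(j)P(j|i,\mu,\nu)+g(i)$. By Assumption~\ref{assm 2.3}(i) (see also the Remark following it) the sets $\mathcal{P}(U(i))$ and $\mathcal{P}(V(i))$ are compact convex metric spaces, and by Lemma~\ref{lemm 3.1} the maps $\nu\mapsto\sum_j f(j)P(j|i,\mu,\nu)$ and $(\mu,\nu)\mapsto\bar c(i,\mu,\nu)$ are continuous, so the one-stage payoff is continuous on $\mathcal{P}(U(i))\times\mathcal{P}(V(i))$. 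A minimax theorem (the delicate point, discussed below) then gives that $\inf_\mu\sup_\nu$ and $\sup_\nu\inf_\mu$ coincide and are attained, so that
$$(Tf)(i):=\inf_{\mu}\sup_{\nu}\Big[e^{\bar c(i,\mu,\nu)}\sum_{j\in S}f(j)P(j|i,\mu,\nu)+g(i)\Big],\qquad i\in\tilde{\mathscr{D}}_{n},$$
together with $(Tf)(i):=0$ for $i\notin\tilde{\mathscr{D}}_n$, is a well-defined map of $\mathcal{B}_{\tilde{\mathscr{D}}_n}$ into itself, and measurable selectors realizing the inner $\sup$ and the outer $\inf$ exist.

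Next I would show $T$ is a contraction on $(\mathcal{B}_{\tilde{\mathscr{D}}_n},\|\cdot\|^\infty)$. For $f_1,f_2\in\mathcal{B}_{\tilde{\mathscr{D}}_n}$ and $i\in\tilde{\mathscr{D}}_n$, the elementary bound $|\inf_\mu\sup_\nu A-\inf_\mu\sup_\nu B|\le\sup_{\mu,\nu}|A-B|$, combined with $\bar c<-\delta$ on $\tilde{\mathscr{D}}_n$ and $\sum_{j}P(j|i,\mu,\nu)=1$, yields $|(Tf_1)(i)-(Tf_2)(i)|\le e^{-\delta}\sup_{\mu,\nu}\sum_{j\in\tilde{\mathscr{D}}_n}|f_1(j)-f_2(j)|P(j|i,\mu,\nu)\le e^{-\delta}\|f_1-f_2\|^\infty$, where I used that $f_1-f_2$ vanishes off $\tilde{\mathscr{D}}_n$. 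Since both sides are $0$ for $i\notin\tilde{\mathscr{D}}_n$, we get $\|Tf_1-Tf_2\|^\infty\le e^{-\delta}\|f_1-f_2\|^\infty$ with $e^{-\delta}<1$. As $\mathcal{B}_{\tilde{\mathscr{D}}_n}$ is finite-dimensional, hence a complete Banach space, the Banach fixed point theorem delivers a unique $\varphi\in\mathcal{B}_{\tilde{\mathscr{D}}_n}$ with $T\varphi=\varphi$; by the minimax step this $\varphi$ is exactly the unique solution of \eqref{eq 2.7} (both expressions there agreeing).

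For the representation \eqref{eq 2.8} I would iterate $\varphi=T\varphi$ along the trajectory up to the exit time $\tau:=\tau(\tilde{\mathscr{D}}_n)$. Using the selectors, a standard induction under $P_i^{\pi^1,\pi^2}$ gives, for $\tau\wedge N$, the identity $\varphi(i)=E_i^{\pi^1,\pi^2}\big[\sum_{t=0}^{(\tau\wedge N)-1}e^{\sum_{s=0}^{t-1}\bar c(X_s,\cdot)}g(X_t)+e^{\sum_{s=0}^{(\tau\wedge N)-1}\bar c(X_s,\cdot)}\varphi(X_{\tau\wedge N})\big]$, sharpened to $\le$ (resp.\ $\ge$) when player~$1$ (resp.\ player~$2$) uses the optimal selector and the opponent is arbitrary. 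The key estimate is that $X_s\in\tilde{\mathscr{D}}_n$ forces $\bar c(X_s,\cdot)<-\delta$, so $e^{\sum_{s=0}^{t-1}\bar c}\le e^{-\delta t}$ for $t<\tau$; since $\|g\|_\infty:=\sup_i|g(i)|<\infty$ (as $g$ is supported on the finite set $\tilde{\mathscr{D}}_n$), the series in \eqref{eq 2.8} is dominated by $\sum_{t\ge0}e^{-\delta t}\|g\|_\infty$ and converges absolutely. The remainder $E_i^{\pi^1,\pi^2}\big[e^{\sum_{s=0}^{(\tau\wedge N)-1}\bar c}\varphi(X_{\tau\wedge N})\big]$ vanishes as $N\to\infty$: on $\{\tau\le N\}$ it is zero because $X_\tau\notin\tilde{\mathscr{D}}_n$ and $\varphi$ vanishes off $\tilde{\mathscr{D}}_n$, while on $\{\tau>N\}$ it is bounded by $e^{-\delta N}\|\varphi\|^\infty\to0$. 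Letting $N\to\infty$ identifies $\varphi$ with both the upper and the lower value in \eqref{eq 2.8}, the outer $\inf\sup$ and $\sup\inf$ again coinciding by the one-stage minimax.

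The main obstacle is precisely this minimax (Isaacs) interchange at each state: the one-stage payoff $e^{\bar c(i,\mu,\nu)}\sum_j f(j)P(j|i,\mu,\nu)$ is the exponential of an averaged cost multiplied by an affine term, hence it is \emph{not} bilinear in $(\mu,\nu)$ and in general fails to be convex in $\mu$ or concave in $\nu$, so the hypotheses of the classical Sion/Fan theorems are not immediate. I expect this step to require either Fan's convex-like/concave-like minimax formulation or a direct identification of the game value; everything else---compactness of the measure spaces, the contraction estimate, and the optional-stopping passage to the limit---is routine given Assumptions~\ref{assm 2.2}--\ref{assm 2.3} and Lemma~\ref{lemm 3.1}.
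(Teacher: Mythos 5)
Your proposal reproduces the paper's own proof almost step for step. The paper defines exactly your operator $\hat T$ (the sup--inf expression plus $g$, set to zero off $\tilde{\mathscr{D}}_n$), proves it is a contraction from $\bar c<-\delta$ just as you do (the paper's contraction constant $\max_{i\in\tilde{\mathscr{D}}_n}\sup_{\mu,\nu}e^{\bar c(i,\mu,\nu)}<1$ is your $e^{-\delta}$), applies the Banach fixed point theorem on $\mathcal{B}_{\tilde{\mathscr{D}}_n}$, and then derives \eqref{eq 2.8} by fixing mini-max selectors of \eqref{eq 2.7} and iterating along the trajectory via the discrete Dynkin formula of \cite[Lemma~3.1]{WC5} up to $N\wedge\tau(\tilde{\mathscr{D}}_n)$, letting $N\to\infty$ by dominated convergence; your optional-stopping argument with the $e^{-\delta N}$ remainder estimate and the observation that $\varphi(X_{\tau})=0$ is the same computation, if anything written out more carefully. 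So everything you actually prove is correct and is the paper's route.

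The genuine gap is the one you flag yourself: the one-stage interchange $\inf_\mu\sup_\nu=\sup_\nu\inf_\mu$, which you leave as a conditional appeal to ``some'' minimax theorem. In the paper this step is dispatched in one line by citing Fan's minimax theorem \cite[Theorem~3]{Fan}, applied pointwise at the fixed point $\varphi$; so, measured against the paper, that citation is all that separates your write-up from theirs. But your suspicion that this is the delicate point is well founded, and you should know that the bare citation does not verify anything: Fan's theorem needs the payoff to be convexlike in $\mu$ and concavelike in $\nu$, and $F_\varphi(\mu,\nu)=e^{\bar c(i,\mu,\nu)}\sum_j\varphi(j)P(j|i,\mu,\nu)$ need not have either property. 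Indeed, when $\varphi\ge 0$ one has $\log F_\varphi=\bar c(i,\mu,\nu)+\log\bigl(\sum_j\varphi(j)P(j|i,\mu,\nu)\bigr)$, an affine function plus the logarithm of an affine function, hence \emph{concave in each of $\mu$ and $\nu$ separately} --- concave--concave, not convex--concave. The interchange can actually fail for payoffs of exactly this form with an arbitrary nonnegative continuation value: with two actions per player, $p=\mu(\{u_1\})$, $q=\nu(\{v_1\})$, one can realize $\bar c(i,\mu,\nu)=-K-\epsilon(p-q)$ and $\sum_j\varphi(j)P(j|i,\mu,\nu)=1+\epsilon(p-q)$ by a legitimate kernel, so that $F_\varphi=e^{-K}e^{h(p-q)}$ with $h(z)=-\epsilon z+\log(1+\epsilon z)$ strictly concave, $h(0)=0$ and $h<0$ otherwise; then $\inf_p\sup_q F_\varphi=e^{-K}$ (take $q=p$), while $\sup_q\inf_p F_\varphi\le e^{-K}e^{\max\{h(1/2),\,h(-1/2)\}}<e^{-K}$. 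Since the Banach fixed point is, a priori, just some element of $\mathcal{B}_{\tilde{\mathscr{D}}_n}$ with no special structure, no off-the-shelf minimax theorem applied to it can close this step; a complete proof must either verify Fan's convexlike/concavelike hypotheses for the specific fixed point (which neither you nor the paper does), or replace the payoff by its bilinear extension $\int\int e^{\bar c(i,u,v)}\sum_j\varphi(j)P(j|i,u,v)\,\mu(du)\,\nu(dv)$, for which the interchange is immediate --- but which corresponds to a realized-cost convention different from the one underlying \eqref{eq 2.4} and \eqref{eq 2.7}.
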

\begin{proof}
Let $g\in \mathcal{B}_{\tilde{\mathscr{D}}_n}$.
		Define a map  $\hat{T}:\mathcal{B}_{\tilde{\mathscr{D}}_n}\rightarrow\mathcal{B}_{\tilde{\mathscr{D}}_n}$ by
		\begin{align}
	&	\sup_{\nu\in \mathcal{P}(V(i))}\inf_{\mu\in \mathcal{P}(U(i))}\biggl[e^{\bar{c}(i,\mu,\nu)}\sum_{j\in S}\tilde{\phi}(j){P}(j|i,\mu,\nu)+g(i)\biggr]=\hat{T}\tilde{\phi}(i),~i\in\tilde{\mathscr{D}}_n, \tilde{\phi}\in \mathcal{B}_{\tilde{\mathscr{D}}_n}\nonumber\\
	&~~~~~~~\text{ and}~\hat{T}\tilde{\phi}(i)=0~\text{ for }~i\in \tilde{\mathscr{D}}_n^c.\label{eq 2.10}
		\end{align}
		Now, let $\tilde{\phi}_1, \tilde{\phi}_2\in \mathcal{B}_{\tilde{\mathscr{D}}_n}$. Then
		\begin{align*}
		(\hat{T}\tilde{\phi}_2(i)-\hat{T}\tilde{\phi}_1(i))\leq \max_{i\in \tilde{\mathscr{D}}_n}	\sup_{\nu\in \mathcal{P}(V(i))}\sup_{\mu\in \mathcal{P}(U(i))}e^{\bar{c}(i,\mu,\nu)}\|\tilde{\phi}_2-\tilde{\phi}_1\|_{\tilde{\mathscr{D}}_n}.
		\end{align*}
		Similarly, we have
		\begin{align*}
		(\hat{T}\tilde{\phi}_1(i)-\hat{T}\tilde{\phi}_2(i))\leq \max_{i\in \tilde{\mathscr{D}}_n}	\sup_{\nu\in \mathcal{P}(V(i))}\sup_{\mu\in \mathcal{P}(U(i))}e^{\bar{c}(i,\mu,\nu)}\|\tilde{\phi}_2-\tilde{\phi}_1\|_{\tilde{\mathscr{D}}_n}.
		\end{align*}
		Hence
		\begin{align*}
		\|\hat{T}\tilde{\phi}_1(i)-\hat{T}\tilde{\phi}_2(i)\|_{\tilde{\mathscr{D}}_n}\leq \max_{i\in \tilde{\mathscr{D}}_n}	\sup_{\nu\in \mathcal{P}(V(i))}\sup_{\mu\in \mathcal{P}(U(i))}e^{\bar{c}(i,\mu,\nu)}\|\tilde{\phi}_2-\tilde{\phi}_1\|_{\tilde{\mathscr{D}}_n},
		\end{align*}
		where for any function $f\in \mathcal{B}_{\tilde{\mathscr{D}}_n}$, $\|f\|_{\tilde{\mathscr{D}}_n}=\max\{|f(i)|:i\in \tilde{\mathscr{D}}_n\}$. Since $\bar{c}<0$, it is easy to see that $\displaystyle  \max_{i\in \tilde{\mathscr{D}}_n}\sup_{\nu\in \mathcal{P}(V(i))}\sup_{\mu\in \mathcal{P}(U(i))}e^{\bar{c}(i,\mu,\nu)}<1$.
		Hence $\hat{T}$ is a contraction map. Thus by Banach fixed point theorem, there exists a unique $\varphi\in \mathcal{B}_{\tilde{\mathscr{D}}_n}$ such that $\hat{T}(\varphi)=\varphi$.
		Now by applying Fan's minimax theorem in [\cite{Fan}, Theorem 3], we get
		\begin{align*}
		&\sup_{\nu\in P(V(i))}\inf_{\mu\in P(U(i))}\biggl[e^{\bar{c}(i,\mu,\nu)}\sum_{j\in S}\varphi(j)P(j|i,\mu,\nu)\biggr]=\inf_{\mu\in P(U(i))}\sup_{\nu\in P(V(i))}\biggl[e^{c(i,\mu,\nu)}\sum_{j\in S}\varphi(j)P(j|i,\mu,\nu)\biggr].
		\end{align*}
		Hence we conclude that (\ref{eq 2.7}) has unique solution.
		Now let $(\pi^{*1}_n,\pi^{*2}_n)\in \Pi^1_{SM}\times \Pi^2_{SM}$ be a mini-max selector of (\ref{eq 2.7}), i.e.,
		\begin{align}
		\varphi(i)&=\inf_{\mu\in \mathcal{P}(U(i))}\biggl[e^{\bar{c}(i,\mu,\pi^{*2}_n(i))}\sum_{j\in S}\varphi(j)P(j|i,\mu,\pi^{*2}_n(i))+g(i)\biggr]\nonumber\\
		&=\sup_{\nu\in \mathcal{P}(V(i))}\biggl[e^{\bar{c}(i,\pi^{*1}_n(i),\nu)}\sum_{j\in S}\varphi(j)P(j|i,\pi^{*1}_n(i),\nu)+g(i)\biggr].\label{eq 2.12}
		\end{align}
 		Now by Dynkin's formula \cite[Lemma~3.1]{WC5}, for any $(\pi^1,\pi^2)\in \Pi^1_{ad}\times \Pi^2_{ad}$ and $N\in\mathbb{N}$, we have
		\begin{align}
		&E^{\pi^1,\pi^2}_i\biggl[e^{\sum_{t=0}^{N\wedge{\tau}(\tilde{\mathscr{D}}_n)-1}\bar{c}(X_t,\pi^1_t,\pi^2_t)}\varphi(X_{N\wedge{\tau}(\tilde{\mathscr{D}}_n)})\biggr]-\varphi(i)\nonumber\\
		&=E^{\pi^1,\pi^2}_i\biggl[\sum_{t=1}^{N\wedge{\tau}(\tilde{\mathscr{D}}_n)}e^{\sum_{r=0}^{t-1}\bar{c}(X_r,\pi^1_r,\pi^2_r)}\bigg(\sum_{j\in S}\varphi(j)P(j|X_{t-1},\pi^1_{t-1},\pi^2_{t-1})-e^{\bar{c}(X_{t-1},\pi^1_{t-1},\pi^2_{t-1})}\varphi(X_{t-1})\bigg)\biggr].\label{eq 2.11}
		\end{align}
		Then, using (\ref{eq 2.12}) and (\ref{eq 2.11}), we obtain
		\begin{align*}
		 & E^{\pi^{*1}_n,\pi^2}_i\biggl[\sum_{t=0}^{N\wedge{\tau}(\tilde{\mathscr{D}}_n)-1}e^{\sum_{s=0}^{t-1}\bar{c}(X_s,\pi^{*1}_n(X_s),\pi^{2}_s)}g(X_t)\biggr] \\ & \leq -E^{\pi^{*1}_n,\pi^2}_i\biggl[e^{\sum_{s=0}^{N\wedge{\tau}(\tilde{\mathscr{D}}_n)-1}\bar{c}(X_s,\pi^{*1}_n(X_s),\pi^{2}_s)}\varphi(X_{N\wedge{\tau}(\tilde{\mathscr{D}}_n)})\biggr]+\varphi(i).
		\end{align*}
	Since $\bar{c}<0$ and $\varphi\in \mathcal{B}_{\tilde{\mathscr{D}}_n}$,  taking $N\rightarrow\infty$ in the above equation and using the dominated convergence theorem, we deduce that
		\begin{align*}
		& E^{\pi^{*1}_n,\pi^{2}}_i\biggl[\sum_{t=0}^{{\tau}(\tilde{\mathscr{D}}_n)-1}e^{\sum_{s=0}^{t-1}\bar{c}(X_s,\pi^{*1}_n(X_s),\pi^2_s)}g(X_t)\biggr]\\ &\leq -E^{\pi^{*1}_n,\pi^2}_i\biggl[e^{\sum_{s=0}^{{\tau}(\tilde{\mathscr{D}}_n)-1}\bar{c}(X_s,\pi^{*1}_n(X_s),\pi^{2}_s)}\varphi(X_{{\tau}(\tilde{\mathscr{D}}_n)})\biggr]+\varphi(i).
		\end{align*}
		Hence
		\begin{equation*}
		\varphi(i)\geq E^{\pi^{*1}_n,\pi^2}_i\biggl[\sum_{t=0}^{{\tau}(\tilde{\mathscr{D}}_n)-1}e^{\sum_{s=0}^{t-1}\bar{c}(X_s,\pi^{*1}_n(X_s),\pi^{2}_s)}g(X_t)\biggr].
		\end{equation*}
		Since $\pi^2\in \Pi^2$ is arbitrary,
		\begin{align}
		\varphi(i)&\geq\sup_{\pi^2\in \Pi^2_{ad}} E^{\pi^{*1}_n,\pi^2}_i\biggl[\sum_{t=0}^{{\tau}(\tilde{\mathscr{D}}_n)-1}e^{\sum_{s=0}^{t-1}\bar{c}(X_s,\pi^{*1}_n(X_s),\pi^{2}_s)}g(X_t)\biggr]\nonumber\\
		&\geq\inf_{\pi^1\in \Pi^1_{ad}}\sup_{\pi^2\in \Pi^2} E^{\pi^{1},\pi^2}_i\biggl[\sum_{t=0}^{{\tau}(\tilde{\mathscr{D}}_n)-1}e^{\sum_{s=0}^{t-1}\bar{c}(X_s,\pi^{1}_s,\pi^2_s)}g(X_t)\biggr].\label{eq 2.13}
		\end{align}
By similar arguments, using (\ref{eq 2.12}), (\ref{eq 2.11}) and the dominated convergence theorem, we obtain
		\begin{align}
		\varphi(i)&\leq\inf_{\pi^1\in \Pi^1_{ad}} E^{\pi^{1},\pi^{*2}_n}_i\biggl[\sum_{t=0}^{{\tau}(\tilde{\mathscr{D}}_n)-1}e^{\sum_{s=0}^{t-1}\bar{c}(X_s,\pi^{1}_s,\pi^{*2}_n(X_s))}g(X_t)\biggr]\nonumber\\
		&\leq \sup_{\pi^2\in \Pi^2_{ad}}\inf_{\pi^1\in \Pi^1_{ad}} E^{\pi^{1},\pi^2}_i\biggl[\sum_{t=0}^{{\tau}(\tilde{\mathscr{D}}_n)-1}e^{\sum_{s=0}^{t-1}\bar{c}(X_s,\pi^{1}_s,\pi^2_s)}g(X_t)\biggr].\label{eq 2.14}
		\end{align}
Now combining (\ref{eq 2.13}) and (\ref{eq 2.14}), we obtaiin (\ref{eq 2.8}).

\end{proof}

Next using Theorem~\ref{theo 2.1}, we show that for each $n\in \mathbb{N},$ Dirichlet eigenpair exists in $\tilde{\mathscr{D}}_n$\,. That is we establish the following  result.
\begin{lemma}\label{lemm 2.3}
Suppose Assumptions \ref{assm 2.2}, and \ref{assm 2.3} hold. Then there exists an eigenpair $(\rho_{n},\psi_{n})\in \mathbb{R}\times \mathcal{B}_{\tilde{\mathscr{D}}_n}^{+}$, $\psi_{n}\gneq 0$ for the following Dirichlet nonlinear eigenequation
	\begin{align}
e^{\rho_{n}}\psi_{n}(i)&=\inf_{\mu\in \mathcal{P}(U(i))}\sup_{\nu\in \mathcal{P}(V(i))}\bigg[e^{c(i,\mu,\nu)}\sum_{j\in S}\psi_{n}(j)P(j|i,\mu,\nu)\bigg]\nonumber\\
&=\sup_{\nu\in \mathcal{P}(V(i))}\inf_{\mu\in \mathcal{P}(U(i))}\bigg[e^{c(i,\mu,\nu)}\sum_{j\in S}\psi_{n}(j)P(j|i,\mu,\nu)\bigg].\label{eq 2.24}
	\end{align}
	 The eigenvalue of the above equation satisfies
	\begin{equation}
	\rho_{n}\leq\sup_{\pi^2\in \Pi^2_{ad}} \inf_{\pi^1\in \Pi^1_{ad}}\mathscr{J}^{\pi^1,\pi^2}(i,c),\label{eq 2.16}
	\end{equation}
	for all $i\in S$ such that $\psi_{n}(i)>0$. In addition we have that the sequence $\{\rho_{n}\}$ is bounded and $\displaystyle \liminf_{n\rightarrow \infty}\rho_n\geq 0$.
	\end{lemma}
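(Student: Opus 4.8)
The plan is to realize $(e^{\rho_n},\psi_n)$ as the principal eigenpair of a positive, $1$-homogeneous operator on the finite-dimensional space $\mathcal{B}_{\tilde{\mathscr{D}}_n}$ via Theorem~\ref{theo 2.1}, and then to read off (\ref{eq 2.16}) and the asymptotics of $\rho_n$ through Dynkin/Feynman--Kac arguments. First I would take $\tilde{\mathcal{X}}=\mathcal{B}_{\tilde{\mathscr{D}}_n}$ with norm $\|\cdot\|_{\tilde{\mathscr{D}}_n}$ and cone $\tilde{\mathcal{C}}=\mathcal{B}_{\tilde{\mathscr{D}}_n}^{+}$, so $\tilde{\mathcal{X}}=\tilde{\mathcal{C}}-\tilde{\mathcal{C}}$, and define $T\psi(i)=\inf_{\mu\in\mathcal{P}(U(i))}\sup_{\nu\in\mathcal{P}(V(i))}[e^{c(i,\mu,\nu)}\sum_{j\in S}\psi(j)P(j|i,\mu,\nu)]$ for $i\in\tilde{\mathscr{D}}_n$ and $T\psi(i)=0$ otherwise (recall $\psi$ vanishes off $\tilde{\mathscr{D}}_n$). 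Since $e^{c}>0$, $P\ge0$, and $\inf\sup$ preserves order and positive scaling, $T$ is $1$-homogeneous and order-preserving; it is continuous by Lemma~\ref{lemm 3.1} and, $\tilde{\mathscr{D}}_n$ being finite, compact. The only nonroutine hypothesis of Theorem~\ref{theo 2.1} is the existence of a nonzero $\zeta\in\tilde{\mathcal{C}}$ and $\hat N>0$ with $\hat N T\zeta\succeq\zeta$, and I expect this to be the main obstacle. I would establish it through the reference state $i_0$: by Assumption~\ref{assm 2.3}(ii), continuity and compactness, $\min_{(u,v)\in U(i_0)\times V(i_0)}P(j|i_0,u,v)>0$ for each $j\ne i_0$, so the minimizing player cannot drive the mass of a test function anchored at $i_0$ to zero; exhibiting a corresponding $\zeta\in\mathcal{B}_{\tilde{\mathscr{D}}_n}^{+}$ together with $\hat N$ is the delicate step. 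Theorem~\ref{theo 2.1} then delivers $\psi_n\gneq0$ and $\tilde\lambda>0$ with $T\psi_n=\tilde\lambda\psi_n$; put $\rho_n:=\ln\tilde\lambda$. The equality of the two forms in (\ref{eq 2.24}) follows from Fan's minimax theorem exactly as in the proof of Proposition~\ref{prop 2.1}, and the minimizing/maximizing selectors exist by Assumption~\ref{assm 2.3}(i) and Lemma~\ref{lemm 3.1}.

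For (\ref{eq 2.16}) I would fix an outer maximizing selector $\pi^{*2}_n\in\Pi^2_{SM}$ in (\ref{eq 2.24}), so that $e^{\rho_n}\psi_n(i)\le e^{c(i,\mu,\pi^{*2}_n(i))}\sum_{j}\psi_n(j)P(j|i,\mu,\pi^{*2}_n(i))$ for every $\mu$ and $i\in\tilde{\mathscr{D}}_n$. Iterating this bound with player~2 fixed at $\pi^{*2}_n$ and $\pi^1\in\Pi^1_{ad}$ arbitrary, through the Dynkin formula \cite[Lemma~3.1]{WC5} up to $N\wedge\tau(\tilde{\mathscr{D}}_n)$ and using that $\psi_n$ vanishes off $\tilde{\mathscr{D}}_n$ (so the exit term drops), gives $e^{N\rho_n}\psi_n(i)\le\|\psi_n\|_{\tilde{\mathscr{D}}_n}\,E_i^{\pi^1,\pi^{*2}_n}[e^{\sum_{t=0}^{N-1}c(X_t,\pi^1_t,\pi^{*2}_n)}]$. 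Applying $\tfrac1N\ln(\cdot)$ and $\limsup_{N\to\infty}$, the terms $\tfrac1N\ln\psi_n(i)$ and $\tfrac1N\ln\|\psi_n\|_{\tilde{\mathscr{D}}_n}$ vanish whenever $\psi_n(i)>0$, so $\rho_n\le\mathscr{J}^{\pi^1,\pi^{*2}_n}(i,c)$ for all $\pi^1$; taking the infimum over $\pi^1$ and bounding by the lower value yields (\ref{eq 2.16}).

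Finally, for boundedness and $\liminf_n\rho_n\ge0$ I would first note that $\psi_n(i_0)>0$ for every $n$: if $\psi_n(j)>0$ for some $j\ne i_0$, then $P(j|i_0,\cdot)>0$ forces the right-hand side of (\ref{eq 2.24}) at $i_0$ to be positive. Hence (\ref{eq 2.16}) at $i=i_0$ gives $\rho_n\le\mathscr{L}(i_0)$, which is finite by Assumption~\ref{assm 2.2} (this is where finiteness of (\ref{eq 2.4}) enters) and independent of $n$. For the lower asymptotic, let $\pi^{*1}_n$ be an outer minimizing selector; since $c\ge0$, (\ref{eq 2.24}) gives $\sum_{j}\psi_n(j)P(j|i,\pi^{*1}_n(i),\nu)\le e^{\rho_n}\psi_n(i)$ for all $\nu$, so $e^{-\rho_n(t\wedge\tau(\tilde{\mathscr{D}}_n))}\psi_n(X_{t\wedge\tau(\tilde{\mathscr{D}}_n)})$ is a supermartingale under $(\pi^{*1}_n,\pi^2)$. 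Normalizing $\psi_n(i_0)=1$, optionally stopping at the first return time $\sigma$ to $i_0$, and using $\psi_n\equiv0$ off $\tilde{\mathscr{D}}_n$, I obtain $1\ge E_{i_0}^{\pi^{*1}_n,\pi^2}[e^{-\rho_n\sigma}I_{\{\sigma<\tau(\tilde{\mathscr{D}}_n)\}}]$. If $\rho_n\le-\epsilon<0$ along a subsequence, then $e^{-\rho_n\sigma}\ge e^{\epsilon}$ since $\sigma\ge1$, whence $1\ge e^{\epsilon}P_{i_0}^{\pi^{*1}_n,\pi^2}(\sigma<\tau(\tilde{\mathscr{D}}_n))$. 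By irreducibility and the strategy-uniform stability of Assumption~\ref{assm 2.2} encoded in Lemma~\ref{lemm 2.4}, the chain returns to $i_0$ before exiting $\tilde{\mathscr{D}}_n$ with probability tending to $1$ as $n\to\infty$, uniformly over strategies, so the last inequality forces $1\ge e^{\epsilon}$, a contradiction; thus $\liminf_n\rho_n\ge0$, and together with the uniform upper bound $\{\rho_n\}$ is bounded. The delicate point is the uniform-in-$n$ lower bound on $P_{i_0}^{\pi^{*1}_n,\pi^2}(\sigma<\tau(\tilde{\mathscr{D}}_n))$ for the $n$-dependent selectors $\pi^{*1}_n$, which is exactly what the strategy-independent estimate of Lemma~\ref{lemm 2.4} supplies.
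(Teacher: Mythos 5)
Your proposal has two genuine gaps, and the first is fatal to the construction as you set it up. You apply Theorem~\ref{theo 2.1} directly to the Shapley operator $T\psi(i)=\inf_{\mu}\sup_{\nu}\bigl[e^{c(i,\mu,\nu)}\sum_{j}\psi(j)P(j|i,\mu,\nu)\bigr]$ on $\mathcal{B}_{\tilde{\mathscr{D}}_n}$, and you yourself flag the hypothesis $\hat{N}T\zeta\succeq\zeta$ as ``the delicate step'' --- but you never verify it, and under the stated assumptions it cannot be verified in general. Assumption~\ref{assm 2.3}(ii) makes $P(j|i_0,u,v)$ positive only for $j\neq i_0$ and says nothing about $P(i_0|i_0,\cdot,\cdot)$, so the natural candidate $\zeta=I_{\{i_0\}}$ can give $T\zeta(i_0)=\inf_{\mu}\sup_{\nu}e^{c}P(i_0|i_0,\mu,\nu)=0$. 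Worse, $T\zeta$ vanishes at every state of $\tilde{\mathscr{D}}_n$ from which the chain leaves $\tilde{\mathscr{D}}_n$ in one step with probability one (such states are compatible with irreducibility and the Lyapunov condition), so no admissible $\zeta$ charging such a state can work. This is precisely why the paper does not touch $T$: it shifts the cost to $c'=c-\|c\|_{\tilde{\mathscr{D}}_n}-\delta<-\delta$, solves the Dirichlet fixed-point problem of Proposition~\ref{prop 2.1}, and applies Krein--Rutman to the resolvent-type operator $\bar{T}(g)(i)=\sup_{\pi^2}\inf_{\pi^1}E_i^{\pi^1,\pi^2}\bigl[\sum_{t=0}^{\tau(\tilde{\mathscr{D}}_n)-1}e^{\sum_{s<t}c'}g(X_t)\bigr]$, for which the cone condition is immediate: the $t=0$ term gives $\bar{T}(g)(i_0)\geq g(i_0)$ for $g=I_{\{i_0\}}$, hence $\bar{T}(g)\succeq g$. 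The eigenequation (\ref{eq 2.24}) is then recovered algebraically from $\bar{T}\psi_n=\lambda'\psi_n$ (via $e^{\rho'_n}=(\lambda'-1)/\lambda'$) together with Fan's minimax theorem. Without this detour, or a substitute for it, your construction does not get off the ground.

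Your derivation of (\ref{eq 2.16}) is essentially the paper's (outer maximizing selector, Dynkin iteration, $\frac{1}{N}\log$), and the upper bound on $\rho_n$ is fine modulo the drift computation the paper spells out. The second gap is in your proof that $\liminf_n\rho_n\geq 0$: the optional-stopping inequality $1\geq E_{i_0}^{\pi^{*1}_n,\pi^2}\bigl[e^{-\rho_n\sigma}I_{\{\sigma<\tau(\tilde{\mathscr{D}}_n)\}}\bigr]$ is correct, but your punchline needs $P_{i_0}^{\pi^{*1}_n,\pi^2}(\sigma<\tau(\tilde{\mathscr{D}}_n))\to 1$ along the $n$-dependent selectors $\pi^{*1}_n$, and Lemma~\ref{lemm 2.4} does not supply this. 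That lemma controls entry into a fixed finite set $\tilde{\mathscr{B}}\supset\tilde{\mathscr{K}}$ uniformly over strategies; it says nothing about hitting the particular state $i_0$ (which need not even lie in $\tilde{\mathscr{K}}$) before exiting $\tilde{\mathscr{D}}_n$. Irreducibility gives recurrence of $i_0$ for each fixed stationary pair, with no uniformity in $n$: as $\pi^{*1}_n$ varies, the per-visit probability of reaching $i_0$ from $\tilde{\mathscr{B}}$ can degenerate, while the per-excursion probability of exiting $\tilde{\mathscr{D}}_n$ need not vanish fast enough to compensate ($\mathcal{W}$ is not assumed norm-like). The paper avoids any uniform recurrence claim altogether: it normalizes $\psi_n(i_0)=1$, uses (\ref{eq 2.24}) and Assumption~\ref{assm 2.3}(ii) to get the $n$-uniform bound $\psi_n(j)\leq \bigl[\inf_{\mu,\nu}P(j|i_0,\mu,\nu)\bigr]^{-1}$, passes to limits $\psi_n\to\psi$ and $\pi^{*k}_n\to\pi^{*k}$ by compactness of $\Pi^k_{SM}$, and then works with the single limiting stationary pair, where recurrence plus Doob's supermartingale convergence forces $\psi\equiv 1$ and contradicts the limiting equation. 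To close your argument you would either need a genuinely new strategy-uniform estimate that the assumptions do not provide, or you should adopt this diagonal-limit scheme.
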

\begin{proof}
Let $c^{'}=c-\|c\|_{\tilde{\mathscr{D}}_n}-\delta$ in $\tilde{\mathscr{D}}_n$,  for some constant $\delta>0$. Then it is easy to see that $c^{'}<-\delta$.
		Now  consider a mapping $\bar{T}:\mathcal{B}_{\tilde{\mathscr{D}}_n}\rightarrow\mathcal{B}_{\tilde{\mathscr{D}}_n}$ defined by
		\begin{align}
		\bar{T}(g)(i):=\sup_{\pi^2\in \Pi^2_{ad}}\inf_{\pi^1\in \Pi^1_{ad}} E^{\pi^1,\pi^2}_i\biggl[\sum_{t=0}^{{\tau}(\tilde{\mathscr{D}}_n)-1}e^{\sum_{s=0}^{t-1}c^{'}(X_s,\pi^1_s,\pi^2_s)}g(X_t)\biggr],~i\in \tilde{\mathscr{D}}_n\,,\label{eq 2.17}
		\end{align}
with	 $\bar{T}(g)(i)=0$ for $i\in \tilde{\mathscr{D}}_n^c,$ where $g\in \mathcal{B}_{\tilde{\mathscr{D}}_n}$. From Proposition \ref{prop 2.1} it is clear that $\bar{T}$ is well defined. Since ${c}^{'}<-\delta$, for $g_1,g_2\in \mathcal{B}_{\tilde{\mathscr{D}}_n}$, it follows that $$\|\bar{T}({g}_1)-	\bar{T}({g}_2)\|_{{\tilde{\mathscr{D}}_n}}\leq \alpha_1 \|{g}_1-{g}_2\|_{{\tilde{\mathscr{D}}_n}},$$ for some constant $\alpha_1>0$. Hence the map $\bar{T}$ is continuous.
		Let $g_1,~g_2\in \mathcal{B}_{\tilde{\mathscr{D}}_n}$ with $g_1\succeq g_2$. Also, let $\bar{T}(g_k)=\varphi_k$, $k=1,2$. Thus $\varphi_2$ is a solution of
		\begin{align*}
		\varphi_2(i)&=\sup_{\nu\in \mathcal{P}(V(i))}\inf_{\mu\in \mathcal{P}(U(i))}\biggl[e^{c^{'}(i,\mu,\nu)}\sum_{j\in S}\varphi_2(j)P(j|i,\mu,\nu)+g_2(i)\biggr]\nonumber\\
		&=\inf_{\mu\in \mathcal{P}(U(i))}\biggl[e^{c^{'}(i,\mu,\pi^{*2}_n(i))}\sum_{j\in S}\varphi_2(j)P(j|i,\mu,\pi^{*2}_n(i))+g_2(i)\biggr]~\forall i\in \tilde{\mathscr{D}}_n^c,
		\end{align*}
		where $\pi^{*2}_n\in \Pi^2_{SM}$ is an outer maximizing selector. Therefore
		\begin{align*}
		& \bar{T}(g_1)(i) - \bar{T}(g_2)(i)\\
		=&\sup_{\pi^2\in \Pi^2_{ad}}\inf_{\pi^1\in \Pi^1_{ad}}E^{\pi^1,\pi^2}_i\biggl[\sum_{t=0}^{{\tau}(\tilde{\mathscr{D}}_n)-1}e^{\sum_{s=0}^{t-1}c^{'}(X_s,\pi^1_s,\pi^2_s)}g_1(X_t)\biggr] \\ &-\sup_{\pi^2\in \Pi^2_{ad}}\inf_{\pi^1\in \Pi^1_{ad}}E^{\pi^1,\pi^2}_i\biggl[\sum_{t=0}^{{\tau}(\tilde{\mathscr{D}}_n)-1}e^{\sum_{s=0}^{t-1}c^{'}(X_s,\pi^1_s,\pi^2_s)}g_2(X_t)\biggr]\\
		=&\sup_{\pi^2\in \Pi^2_{ad}}\inf_{\pi^1\in \Pi^1_{ad}}E^{\pi^1,\pi^2}_i\biggl[\sum_{t=0}^{{\tau}(\tilde{\mathscr{D}}_n)-1}e^{\sum_{s=0}^{t-1}c^{'}(X_s,\pi^1_s,\pi^2_s)}g_1(X_t)\biggr] \\ &-\inf_{\pi^1\in \Pi^1_{ad}}E^{\pi^{1},\pi^{*2}_n}_i\biggl[\sum_{t=0}^{{\tau}(\tilde{\mathscr{D}}_n)-1}e^{\sum_{s=0}^{t-1}c^{'}(X_s,\pi^1_s,\pi^{*2}_n(X_s))}g_2(X_t)\biggr]\\
		\geq & \inf_{\pi^1\in \Pi^1_{ad}}E^{\pi^{1},\pi^{*2}_n}_i\biggl[\sum_{t=0}^{{\tau}(\tilde{\mathscr{D}}_n)-1}e^{\sum_{s=0}^{t-1}c^{'}(X_s,\pi^1_s,\pi^{*2}_n(X_s))ds}g_1(X_t)\biggr] \\ &-\inf_{\pi^1\in \Pi^1_{ad}}E^{\pi^{1},\pi^{*2}_n}_i\biggl[\sum_{t=0}^{{\tau}(\tilde{\mathscr{D}}_n)-1}e^{\sum_{s=0}^{t-1}c^{'}(X_s,\pi^1_s,\pi^{*2}_n(X_s))}g_2(X_t)\biggr]\\
		\geq & \inf_{\pi^1\in \Pi^1_{ad}}E^{\pi^{1},\pi^{*2}_n}_i\biggl[\sum_{t=0}^{{\tau}(\tilde{\mathscr{D}}_n)-1}e^{\sum_{s=0}^{t-1}c^{'}(X_s,\pi^1_s,\pi^{*2}_n(X_s))}(g_1(X_t)-g_2(X_t))\biggr].
		\end{align*}
		Hence $	\bar{T}(g_1)(i)-	\bar{T}(g_2)(i)\geq 0$ for all $i\in S$.
		This implies that $	\bar{T}(g_1)\succeq 	\bar{T}(g_2)$. Choose a function $g\in \mathcal{B}_{\tilde{\mathscr{D}}_n}$ such that $g(i_0)=1$ and $g(j)=0$ for all $j\neq i_0$, where $i_0$ is as in Assumption~\ref{assm 2.3}(ii). Thus by (\ref{eq 2.17}), we have
		$$	\bar{T}(g)(i_0)\geq g(i_0)>0.$$
Thus we have $\bar{T}(g)\succeq g$. Let $\{g_m\}\subset \mathcal{B}_{\tilde{\mathscr{D}}_n}$ be a bounded sequence. Then since $c^{'}<0$, from (\ref{eq 2.17}), we get $\|\bar{T}g_m\|_\infty\leq \alpha_2$, for some constant $\alpha_2>0$. So, by a diagonalization argument, there exists a subsequence $m_k$ of $m$ and a function $\phi\in \mathcal{B}_{\tilde{\mathscr{D}}_n}$ such that $\|\bar{T}g_{m_k}-\phi\|_{{\tilde{\mathscr{D}}_n}}\rightarrow 0$ as $k\rightarrow\infty$. Thus the map $\bar{T}$ is completely continuous. By the definition of the map $\bar{T}$, it is easy to see that $\bar{T}(\lambda g)=\lambda \bar{T}(g)$ for all $\lambda\geq 0$. Hence by Theorem \ref{theo 2.1}, there exists a nontrivial $\psi_{n}\in \mathcal{B}_{\tilde{\mathscr{D}}_n}^{+}$ and a constant $\lambda^{'}_{\tilde{\mathscr{D}}_n}>0$ such that $	\bar{T}(\psi_{n})=\lambda^{'}_{\tilde{\mathscr{D}}_n}\psi_{n}$, i.e.,
\begin{equation*}
\lambda^{'}_{\tilde{\mathscr{D}}_n}\psi_{n}(i) = \sup_{\nu\in \mathcal{P}(V(i))}\inf_{\mu\in \mathcal{P}(U(i))}\biggl[e^{c^{'}(i,\mu,\nu)}\sum_{j\in S}\lambda^{'}_{\tilde{\mathscr{D}}_n}\psi_{n}(j)P(j|i,\mu,\nu) + \psi_{n}(i)\biggr]~\forall i\in \tilde{\mathscr{D}}_n.
\end{equation*} Let $\rho_{n}^{'}=\log\biggl[\frac{\lambda^{'}_{\tilde{\mathscr{D}}_n}-1}{\lambda^{'}_{\tilde{\mathscr{D}}_n}}\biggr]$. Then
		\begin{align}
		e^{\rho^{'}_{n}}\psi_{n}(i)=\sup_{\nu\in \mathcal{P}(V(i))}\inf_{\mu\in \mathcal{P}(U(i))}\biggl[e^{c^{'}(i,\mu,\nu)}\sum_{j\in S}\psi_{n}(j)P(j|i,\mu,\nu)\biggr]~\forall i\in \tilde{\mathscr{D}}_n.\label{EE}
		\end{align}
Now multiplying both sides of (\ref{EE}) by $e^{\|c\|_{\tilde{\mathscr{D}}_n} \,+\, \delta}$ and applying Fan's minimax theorem, (see [\cite{Fan}, Theorem 3]), we obtain
		\begin{align*}
		e^{	\rho_{n}}\psi_{n}(i)&=\sup_{\nu\in \mathcal{P}(V(i))}\inf_{\mu\in \mathcal{P}(U(i))}\biggl[e^{c(i,\mu,\nu)}\psi_{n}(j)P(j|i,\mu,\nu)\biggr]\\
		&=\inf_{\mu\in \mathcal{P}(U(i))}\sup_{\nu\in \mathcal{P}(V(i))}\biggl[e^{c(i,\mu,\nu)}\psi_{n}(j)P(j|i,\mu,\nu)\biggr]~\forall i\in \tilde{\mathscr{D}}_n\,,
		\end{align*}
where $\rho_{n}=\rho^{'}_{n}+{\|c\|_{\tilde{\mathscr{D}}_n}+\delta}$\,.

			Since $\psi_n\geq 0$ and $\psi_n(i_0)>0$, it follows that from the above equation that $\rho_{n}\geq 0$. Now if $\rho_{n}=0$, it is easy to show that (\ref{eq 2.16}). Assume that $\rho_{n}>0$. Let $\pi^{*2}_n\in \Pi_{SM}^2$ be an outer maximizing selector of (\ref{eq 2.24}). Then we have
		\begin{align}
		e^{\rho_{n}}\psi_{n}(i)=\inf_{\mu\in \mathcal{P}(U(i))}\biggl[e^{c(i,\mu,\pi^{*2}_n(i))}\sum_{j\in S}\psi_{n}(j)P(j|i,\mu,\pi^{*2}_n(i))\biggr]~\forall i\in\tilde{\mathscr{D}}_n.\label{eq 2.19}
		\end{align}
Therefore by using Dynkin's formula and  (\ref{eq 2.19}), we obtain
		\begin{align}
		\psi_{n}(i)&\leq E^{\pi^1,\pi^{*2}_n}_i\biggl[e^{\sum_{s=0}^{T-1}(c(X_s,\pi^1_s,\pi^{*2}_n(X_s))-\rho_{n})}\psi_{n}(X_{T})I_{\{T<{\tau}(\tilde{\mathscr{D}}_n)\}}\biggr]\nonumber\\
		&\leq (\sup_{\tilde{\mathscr{D}}_n}\psi_{n})E^{\pi^1,\pi^{*2}_n}_i\biggl[e^{\sum_{s=0}^{T-1}(c(X_s,\pi^1_s,\pi^{*2}_n(X_s))-\rho_{n})}\biggr].\label{eq 2.20}
		\end{align}
Now, taking logarithm on the both sides of (\ref{eq 2.20}), dividing by $T$ and letting $T\rightarrow\infty$, for each $i\in S$ for which $\psi_{n}>0$, we deduce that
		\begin{align*}
		\rho_{n}&\leq \mathscr{J}^{\pi^1,\pi^{*2}_n}(i,c).
		\end{align*}
		Since $\pi^1\in \Pi^1_{ad}$ is arbitrary, we get
		\begin{align*}
		\rho_{n}&\leq \inf_{\pi^1\in \Pi^1_{ad}}\mathscr{J}^{\pi^1,\pi^{*2}_n}(i,c)\leq \sup_{\pi^2\in \Pi^2_{ad}}\inf_{\pi^1\in \Pi^1_{ad}}\mathscr{J}^{\pi^{1},\pi^{2}}(i,c).
		\end{align*}
		Now we prove that $\rho_{n}$ is bounded. Under Assumption 2.2 (a) since $\|c\|_{\infty} < \tilde{\gamma}$, it is easy to see that $\mathscr{J}^{\pi^1,\pi^2}(i,c) \leq \tilde{\gamma}$\,. Under Assumption 2.2 (b) since $\tilde{\mathscr{K}}$ is finite, there exists a constant $k_1$ such that (\ref{eq 2.6}) can be written as
		\begin{align}
		\sup_{(u,v)\in U(i)\times V(i)}\sum_{j\in S}\mathcal{W}(j)P(j|i,u,v)\leq e^{(k_1-\tilde{\ell}(i))}\mathcal{W}(i) ~\forall i\in S.\label{eq 2.25}
		\end{align}
		Then by using (\ref{eq 2.2}) and successive conditioning, we get
		\begin{align}
		E^{\pi^1,\pi^2}_i\bigg[e^{\sum_{t=0}^{T-1}(\tilde{\ell}(X_t)-k_1)}\mathcal{W}(X_T)\bigg]\leq \mathcal{W}(i)~\forall i\in S.\label{eq 2.26}
		\end{align}
		Since, $\mathcal{W}\geq 1$, from (\ref{eq 2.26}), we get
		$$\mathscr{J}^{\pi^1,\pi^2}(i,\tilde{\ell})\leq k_1~\text{ for all}~i\in S.$$
		Now since $\displaystyle \tilde{\ell}-\sup_{(u,v)\in U(i)\times V(i)}c(\cdot,u,v)$ is norm-like, there exists a constant $k_2$ such that for all $i\in S$, we have $\displaystyle \sup_{(u,v)\in U(i)\times V(i)}c(i,u,v)\leq\tilde{\ell}(i)+k_2$. Hence we get
		\begin{align}
		\mathscr{J}^{\pi^1,\pi^2}(i,c)\leq k_1+k_2~~\forall (\pi^1,\pi^2)\in \Pi^1_{ad}\times \Pi^2_{ad}, \forall i\in S.\label{eq 2.27}
		\end{align}
Therefore it is clear that $\rho_n$ has an upper bound.

Next we show that $\rho_n$ is bounded below. If not, then along a subsequence $\rho_n\rightarrow-\infty$ as $n\rightarrow\infty$. So, $\rho_n<0$ for all large enough $n$. By Assumption \ref{assm 2.3}(ii) from (\ref{eq 2.24}), it is easy to see that $\psi_{n}(i_0)>0$ and so normalizing $\psi_{n}$, we have $\psi_{n}(i_0)=1$. Let $(\pi_n^{*1},\pi_n^{*2})\in \Pi_{SM}^1\times \Pi_{SM}^2$ be a mini-max selector of (\ref{eq 2.24}), thus we have
		\begin{align}
		1=\psi_{n}(i_0)&=e^{-\rho_n}\sup_{\nu\in \mathcal{P}(V(i_0))}\biggl[e^{c(i_0,\pi_n^{*1}(i_0),\nu)}\sum_{j\in S}\psi_n(j)P(j|i_0,\pi_n^{*1}(i_0),\nu)\biggr]\nonumber\\
		&=e^{-\rho_n}\biggl[e^{c(i_0,\pi_n^{*1}(i_0),\pi_n^{*2}(i_0))}\sum_{j\in S}\psi_n(j)P(j|i_0,\pi_n^{*1}(i_0),\pi_n^{*2}(i_0))\biggr].\label{L2.2E2.26}
		\end{align}
		Since $c(i_0,\pi_n^{*1}(i_0),\nu)-\rho_n>0$ for all large $n$, we get
		\begin{align}
		\sum_{j\in S}\psi_n(j)P(j|i_0,\pi_n^{*1}(i_0),\nu)\leq 1.\label{L2.2E2.27}
		\end{align}
Thus, in view of  Assumption \ref{assm 2.3}(ii), we obtain
		\begin{align}
		\psi_n(j)\leq \sup_{(\mu,\nu)\in \mathcal{P}(U(i_0))\times \mathcal{P}(V(i_0))}\frac{1}{P(j|i_0,\mu,\nu)}~\text{ for }~j\neq i_0.\label{L2.2E2.2.9}
		\end{align}
This implies that, $\psi_n$ has an upper bound. Thus by a standard diagonalization argument, there exists a subsequence (by an abuse of notation denoting by the same sequence) and a bounded function $\psi\geq 0$ with $\psi(i_0)=1$ such that $\psi_{n}(i)\rightarrow \psi(i)$, as $n\rightarrow \infty$ for all $i\in S$.
		Now, since $\Pi_{SM}^1$ and $\Pi_{SM}^2$ are compact, there exist a further subsequence and $\pi^{*1}\in \Pi_{SM}^1$ and $\pi^{*2}\in \Pi_{SM}^2$, such that $\pi_n^{*1} \rightarrow \pi^{*1}$ and $\pi_n^{*2} \rightarrow \pi^{*2}$ as $n\rightarrow\infty$.
		 Since $c\geq 0$, by (\ref{eq 2.24}), we get
		\begin{equation}
		e^{\rho_n}\psi_n(i)\geq \biggl[\sum_{j\in S}\psi_n(j)P(j|i,\pi^{*1}_n(i),\pi^{*2}_n(i))\biggr].
		\end{equation}
Hence, by taking $n\rightarrow\infty$, it follows that
		\begin{equation}
		\sum_{j\in S}\psi(j)P(j|i,\pi^{*1}(i),\pi^{*2}(i))\leq 0.\label{L2.2E2.32L}
		\end{equation}
		Therefore by using Dynkin formula and the fact that $\psi\geq 0$, we have
	$$	\psi(i)\geq E^{\pi^{*1},\pi^{*2}}_i[\psi(X_t)]~\forall i\in S.$$
		Hence, $\{\psi(X_n),\mathscr{F}_n\}$ is supermartingale where $X_t$ is the Markov process under the pair of stationary strategies $(\pi^{*1},\pi^{*2})\in \Pi^1_{SM}\times \Pi^2_{SM}$. So, by Doob's martingale convergence theorem $\psi(X_n)\rightarrow \hat{Y}$ almost surely, as $n\rightarrow\infty$. On the other hand by Assumption \ref{assm 2.2}, we have $X_t$ is recurrent. Hence $X_t$ visits every state (in particular $i_0$)  of $S$ infinitely often. So, $\psi(X_n)$ converges only if $\psi\equiv 1$. But putting $i=i_0$ in (\ref{L2.2E2.32L}) and using Assumption \ref{assm 2.3}, we get $\psi(i_0)=1$ and $\psi(j)=0$ for $j\neq i_0$. So, we arrive at a contradiction. This implies that, $\rho_n$ is bounded below.
		
		Now we show that $\displaystyle  \hat{\rho}=\liminf_{n\rightarrow \infty}\rho_{n}\geq 0$. If not, then on contrary, $\hat{\rho}<0$. So, along some subsequence, we have (with an abuse of notation, we use the same sequence) $\displaystyle \rho_{n}\rightarrow\hat{\rho}$, as $n\rightarrow\infty$ and for large $n$, $\rho_n<0$. So, for large $n$, $c(i,\mu,\nu)-\rho_n>0$ for all $(\mu,\nu)\in \mathcal{P}(U(i))\times \mathcal{P}(V(i))$. So, by repeating the above arguments, there exist a subsequence (by an abuse of notation, we take the same sequence) and a bounded function $\phi:S\rightarrow\mathbb{R}$ such that $\psi_n\rightarrow\phi$ satisfying $\phi(i_0)=1$.
		Now from (\ref{eq 2.24}), we get
		\begin{equation}
		\psi_{n}(i)
		\geq \biggl[\sum_{j\in S}\psi_n(j){P}(j|i,\pi^{*1}_n(i),\pi_n^{*2}(i))\biggr].\label{L2.2E2.33}
		\end{equation}
		By Fatou's lemma, taking $n\rightarrow\infty$, we deduce that
	$$	\phi(i)\geq E^{\pi^{*1},\pi^{*2}}_i[\phi(X_1)]~\forall i\in S,$$
		for some pair of stationary strategies $(\pi^{*1},\pi^{*2})\in \Pi^1_{SM}\times \Pi^2_{SM}$. So, $\{\phi(X_t)\}$ is supermartingale. Thus by similar arguments as above we get $\phi\equiv 1$. Now, taking limit $n\rightarrow\infty$ in (\ref{eq 2.24}), we obtain
		$$1=\phi(i)\geq e^{c(i,\pi^{*1}(i),\pi^{*2}(i))-\hat{\rho}}>1.$$
		But this is a contradiction. Thus, $\displaystyle \liminf_{n\rightarrow \infty}\rho_n\geq 0$.

	\end{proof}
\section{Existence of risk-sensitive average optimal strategies}
In this section we prove the existence of a risk-sensitive average optimal stationary strategy using the Shapley equation.  Now we state and prove our main result of this section.
\begin{thm}\label{theo 2.2}
Suppose Assumptions \ref{assm 2.2} and \ref{assm 2.3} hold. Then there exists an eigenpair $(\rho^*,\psi^{*})\in \mathbb{R}_+\times L^\infty_\mathcal{W}$ with $\psi^{*}>0$, such that
\begin{align}
e^{\rho^*}\psi^{*}(i)&=\sup_{\nu\in \mathcal{P}(V(i))}\inf_{\mu\in \mathcal{P}(U(i))}\bigg[e^{c(i,\mu,\nu)}\sum_{j\in S}\psi^{*}(j)P(j|i,\mu,\nu)\bigg]\nonumber\\
&=\inf_{\mu\in \mathcal{P}(U(i))}\sup_{\nu\in \mathcal{P}(V(i))}\bigg[e^{c(i,\mu,\nu)}\sum_{j\in S}\psi^{*}(j)P(j|i,\mu,\nu)\bigg], ~i\in S.\label{T2.21}
\end{align}
Moreover, we have the following
\begin{enumerate}
	\item [(i)]
	\begin{equation}
		\rho^{*}=\inf_{i\in S}\sup_{\pi^2\in \Pi^2_{ad}}\inf_{\pi^1\in \Pi^1_{ad}}\mathscr{J}^{\pi^1,\pi^2}(i,c)=\inf_{i\in S}\inf_{\pi^1\in \Pi^1_{ad}}\sup_{\pi^2\in \Pi^2_{ad}}\mathscr{J}^{\pi^1,\pi^2}(i,c).\label{TE2.13}
	\end{equation}
	
	\item [(ii)] If $(\pi^{*1},\pi^{*2})\in \Pi^1_{SM} \times \Pi^2_{SM} $ be a mini-max selector of (\ref{T2.21}),
	then $(\pi^{*1},\pi^{*2})\in \Pi^1_{SM} \times \Pi^2_{SM} $  is a saddle point equilibrium, i.e.,
		\begin{align}
	\mathscr{J}^{{\pi}^{*1},\pi^{2}}(i,c) \leq  \mathscr{J}^{{\pi}^{*1},{\pi}^{*2}}(i,c) = \rho^{*} \leq \mathscr{J}^{\pi^1,{\pi}^{*2}}(i,c),\,~\forall \pi^1\in \Pi^1_{ad},~\forall \pi^2\in \Pi^2_{ad}.\label{T2.23}
	\end{align}
Thus the value of the game is independent of the initial state.
	\item [(iv)] Let $(\pi^{*1},\pi^{*2})\in \Pi_{SM}^1\times \Pi_{SM}^2$ is a saddle point equilibrium, then  this pair is a mini-max selector of (\ref{T2.21}).
\end{enumerate}
\end{thm}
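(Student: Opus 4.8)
The plan is to realize $(\rho^*,\psi^*)$ as a limit of the Dirichlet eigenpairs $(\rho_n,\psi_n)$ supplied by Lemma~\ref{lemm 2.3}, and then to run a verification argument off the Shapley equation~(\ref{T2.21}). First I would normalize $\psi_n(i_0)=1$, which is legitimate since Assumption~\ref{assm 2.3}(ii) forces $\psi_n(i_0)>0$. The crux of the limit step is a uniform-in-$n$ bound $\psi_n\le K\mathcal{W}$. To obtain it I fix a finite set $\tilde{\mathscr{B}}\supset\tilde{\mathscr{K}}$ with $i_0\in\tilde{\mathscr{B}}$, take an outer maximizing selector $\pi^{*2}_n$ of~(\ref{eq 2.24}), and iterate the resulting one-sided inequality $e^{\rho_n}\psi_n(i)\le e^{c(i,\pi^1,\pi^{*2}_n)}\sum_j\psi_n(j)P(j\mid i,\pi^1,\pi^{*2}_n)$ via Dynkin's formula up to the entry time $\check\tau(\tilde{\mathscr{B}})$. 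Combining this with the norm-like domination of $c$ by $\tilde\ell$ and the Foster--Lyapunov estimate~(\ref{eq 2.22}) of Lemma~\ref{lemm 2.4} yields $\psi_n\le C\,\mathcal{W}\max_{\tilde{\mathscr{B}}}\psi_n$ uniformly in $n$, where $\max_{\tilde{\mathscr{B}}}\psi_n$ is controlled by a Harnack-type comparison using irreducibility. Since $\{\rho_n\}$ is bounded by Lemma~\ref{lemm 2.3}, a diagonal extraction then gives $\psi_n\to\psi^*$ pointwise and $\rho_n\to\rho^*$ along a subsequence, with $\psi^*\le K\mathcal{W}$, so $\psi^*\in L^\infty_{\mathcal{W}}$.

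Next I would pass to the limit in~(\ref{eq 2.24}), using Lemma~\ref{lemm 3.1} for continuity in $(\mu,\nu)$ and a generalized Fatou argument (justified by the $\mathcal{W}$-bound and Assumption~\ref{assm 2.3}(iii)) to interchange the limit, the infinite sum, and the $\inf$--$\sup$; this produces the Shapley equation~(\ref{T2.21}). Strict positivity $\psi^*>0$ everywhere follows from $\psi^*(i_0)=1$ and irreducibility, and $\rho^*\ge 0$ from $\liminf_n\rho_n\ge 0$, giving $(\rho^*,\psi^*)\in\mathbb{R}_+\times L^\infty_{\mathcal{W}}$.

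For part~(ii), let $(\pi^{*1},\pi^{*2})$ be a mini-max selector of~(\ref{T2.21}); the two selectors furnish $e^{\rho^*}\psi^*(i)\ge e^{c(i,\pi^{*1},\pi^2)}\sum_j\psi^*(j)P(j\mid i,\pi^{*1},\pi^2)$ for every $\pi^2$, and the reverse inequality under $\pi^{*2}$ for every $\pi^1$. These make $M_t:=e^{\sum_{s=0}^{t-1}(c(X_s)-\rho^*)}\psi^*(X_t)$ a supermartingale under $(\pi^{*1},\pi^2)$ and a submartingale under $(\pi^1,\pi^{*2})$. Optional stopping at the return time $\check\tau_{i_0}$ to $i_0$, where $\psi^*=1$, then yields the stochastic representations $\psi^*(i)\ge E_i^{\pi^{*1},\pi^2}[e^{\sum_{t=0}^{\check\tau_{i_0}-1}(c(X_t)-\rho^*)}]$ and $\psi^*(i)\le E_i^{\pi^1,\pi^{*2}}[e^{\sum_{t=0}^{\check\tau_{i_0}-1}(c(X_t)-\rho^*)}]$; the passage to the infinite horizon is justified by recurrence (Assumption~\ref{assm 2.2}) together with the multiplicative estimate~(\ref{eq 2.26}) and $\psi^*\le K\mathcal{W}$, which dominate the incomplete last cycle. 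A renewal-reward decomposition over successive returns to $i_0$ converts these per-cycle bounds into $\mathscr{J}^{\pi^{*1},\pi^2}(i,c)\le\rho^*$ for all $\pi^2$ and $\mathscr{J}^{\pi^1,\pi^{*2}}(i,c)\ge\rho^*$ for all $\pi^1$. Specializing to $\pi^2=\pi^{*2}$ and $\pi^1=\pi^{*1}$ gives $\mathscr{J}^{\pi^{*1},\pi^{*2}}(i,c)=\rho^*$, so~(\ref{T2.23}) holds and the value is $\rho^*$, independent of $i$. Part~(i) is then immediate: the two one-sided bounds force $\rho^*\le\mathscr{L}(i)\le\mathscr{U}(i)\le\rho^*$ for every $i$, consistent with~(\ref{eq 2.16}), which gives~(\ref{TE2.13}).

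For the converse~(iv), suppose $(\pi^{*1},\pi^{*2})\in\Pi^1_{SM}\times\Pi^2_{SM}$ is a saddle point, so $\mathscr{J}^{\pi^{*1},\pi^{*2}}=\rho^*$ and $\inf_{\pi^1}\mathscr{J}^{\pi^1,\pi^{*2}}=\rho^*=\sup_{\pi^2}\mathscr{J}^{\pi^{*1},\pi^2}$. Freezing $\pi^{*2}$ reduces the game to a single-controller risk-sensitive ergodic minimization problem whose optimality equation shares the principal eigenvalue $\rho^*$ and, by the uniqueness of the principal eigenpair, the eigenfunction $\psi^*$; the verification in part~(ii) then forces $\pi^{*1}$ to attain the pointwise infimum in~(\ref{T2.21}), and the symmetric argument with $\pi^{*1}$ frozen forces $\pi^{*2}$ to attain the supremum, so $(\pi^{*1},\pi^{*2})$ is a mini-max selector. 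I expect the main obstacle to be the unbounded-cost limit step, namely securing the uniform $\mathcal{W}$-bound and the strict positivity of $\psi^*$ without collapse in the limit, together with the asymmetric treatment of the two players in the verification: the minimizer's bound rests on the return-time representation anchored at $\psi^*(i_0)=1$, while the maximizer's bound requires the Lyapunov control of the auxiliary weight $\mathcal{W}(X_t)$ so that it does not inflate the ergodic rate, and both demand a careful renewal argument to justify optional stopping up to the unbounded return time $\check\tau_{i_0}$.
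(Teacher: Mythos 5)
Your first two paragraphs retrace the paper's own construction (Lemma \ref{L2.4}): Dirichlet eigenpairs from Lemma \ref{lemm 2.3}, a uniform bound $\psi_n\le K\mathcal{W}$ obtained from Dynkin's formula together with the estimates of Lemma \ref{lemm 2.4}, and Fatou/dominated convergence to pass to the limit in (\ref{eq 2.24}); the only deviation is the normalization (you fix $\psi_n(i_0)=1$, the paper rescales $\psi_n$ so that it touches $\mathcal{W}$ inside a fixed finite set, which yields $\psi_n\le\mathcal{W}$ and nontriviality of the limit simultaneously), and that difference is harmless. Your part (iv) likewise parallels Theorem \ref{theo 3.3}, except that the ``uniqueness of the principal eigenpair'' you invoke is not free: in the paper it is itself the output of the eigenfunction-comparison argument. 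The genuine problem is your part (ii).

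Your verification of $\sup_{\pi^2\in\Pi^2_{ad}}\mathscr{J}^{\pi^{*1},\pi^2}(i,c)\le\rho^*$ rests on turning the per-cycle bound $E^{\pi^{*1},\pi^{2}}_{i_0}\bigl[e^{\sum_{t=0}^{\check{\tau}_{i_0}-1}(c-\rho^*)}\bigr]\le\psi^*(i_0)=1$ into an ergodic bound by a renewal-reward decomposition, and that is exactly the step that fails under the paper's hypotheses. First, recurrence to $i_0$ is only guaranteed under pairs of \emph{stationary} strategies (Assumption \ref{assm 2.2}), while in (ii) the opponent $\pi^2$ ranges over all history-dependent strategies, so the cycle structure itself is not available. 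Second, and more fundamentally, $\psi^*$ is positive but has no positive lower bound on $S$, and $c-\rho^*$ is neither nonpositive nor suitably dominated on long stretches away from $i_0$; hence the multiplicative contribution of the incomplete last cycle between the final return to $i_0$ and the deterministic horizon $T$ cannot be absorbed. The only a priori control, of the type (\ref{eq 2.27}), bounds an incomplete segment of length $m$ by a factor of order $e^{(k_1+k_2-\rho^*)m}$, and since $m$ may be of order $T$ while $k_1+k_2$ generally exceeds $\rho^*$, this ruins the $\frac{1}{T}\log$ limit. Equivalently: the supermartingale inequality at a deterministic time, $E^{\pi^{*1},\pi^{2}}_{i}\bigl[e^{\sum_{t=0}^{T-1}(c-\rho^*)}\psi^*(X_T)\bigr]\le\psi^*(i)$, is of no use because one cannot discard the factor $\psi^*(X_T)$ without $\inf_S\psi^*>0$. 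This obstruction is precisely what the paper's Theorem \ref{theo 3.1} is engineered to bypass: it perturbs the cost to $\tilde{c}_n$ so that $\tilde{c}_n-\hat{\rho}_{n}\ge0$ off a finite set, solves the auxiliary one-player eigenproblem (\ref{eq 2.48}) whose eigenfunction $\hat{\psi}_n$ then has a positive \emph{global} lower bound via its stochastic representation, performs the division-and-logarithm step (illegitimate for $\psi^*$) to get $\hat{\rho}_{n}\ge\sup_{\pi^2}\mathscr{J}^{\pi^{*1},\pi^2}(i,c)$, and finally identifies $\hat{\rho}=\rho^*$ by comparing $\hat{\psi}$ with $\psi^*$ through the representations of type (\ref{eq 2.31}) and irreducibility. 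Your proposal contains no substitute for this perturbation-and-comparison mechanism; the symmetric difficulty (now the unbounded weight $\psi^*(X_T)\le\mathcal{W}(X_T)$ from above) affects your submartingale side $\inf_{\pi^1}\mathscr{J}^{\pi^1,\pi^{*2}}(i,c)\ge\rho^*$, which the paper settles by invoking Lemma 2.6 of \cite{BP} for the frozen-$\pi^{*2}$ problem and again comparing eigenfunctions. Until the incomplete-cycle/lower-bound issue is resolved, part (ii) -- and with it part (i) -- is not proved.
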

Rest of this section is dedicated to the proof of Theorem~\ref{theo 2.2}.

Since $c\geq 0$, using Assumption \ref{assm 2.2}, there exists a finite set $\hat{\mathscr{B}}$ containing $\tilde{\mathscr{K}}$ such that we have the  following:
	\begin{itemize}
		\item  Under Assumption  \ref{assm 2.2} (a): since $\tilde{\gamma}> \|{c}\|_{\infty}$, from (\ref{eq 2.16}) we have $\rho_{n} \leq \tilde{\gamma}$. Thus, for all large enough $n$ it holds that
		\begin{align}
		(\sup_{(u,v)\in U(i)\times V(i)}{c}(i,u,v)-\rho_{n})<\tilde{\gamma}~\forall i\in {\hat{\mathscr{B}}}^c.\label{eq 2.32}
		\end{align}
		\item  Under Assumption  \ref{assm 2.2} (b): since the function $\displaystyle \ell(\cdot)-\max_{(u,v)\in U(\cdot)\times V(\cdot)}{c}(\cdot,u,v)$ is norm-like, for all large enough $n$ it holds that
		\begin{align}
		(\sup_{(u,v)\in U(i)\times V(i)}{c}(i,u,v)-\rho_n)<\tilde{\ell} (i)~\forall i\in {\hat{\mathscr{B}}}^c.\label{eq 2.33}
		\end{align}
	\end{itemize}
	Now letting $n\to \infty$ from (\ref{eq 2.24}) we show that the limiting equation admits a positive eigenpair.
\begin{lemma}\label{L2.4}
Suppose Assumptions \ref{assm 2.2} and \ref{assm 2.3} hold. Then there exists an eigenpair $(\rho^*,\psi^{*})\in \mathbb{R}_+\times L^\infty_{\mathcal{W}}$ with $\psi^{*}>0$, such that
	\begin{align}
	e^{\rho^*}\psi^{*}(i)&=\sup_{\nu\in \mathcal{P}(V(i))}\inf_{\mu\in \mathcal{P}(U(i))}\bigg[e^{c(i,\mu,\nu)}\sum_{j\in S}\psi^{*}(j)P(j|i,\mu,\nu)\bigg]\nonumber\\
	&=\inf_{\mu\in \mathcal{P}(U(i))}\sup_{\nu\in \mathcal{P}(V(i))}\bigg[e^{c(i,\mu,\nu)}\sum_{j\in S}\psi^{*}(j)P(j|i,\mu,\nu)\bigg], ~i\in S.\label{eq 2.30}
	\end{align}
	Furthermore, for any mini-max selector $(\pi^{*1},\pi^{*2})\in \Pi^1_{SM} \times \Pi^2_{SM} $ of (\ref{eq 2.30}) we have the following:
	\begin{itemize}
    \item[(i)]		
    \begin{equation}
	\rho^*\leq \inf_{i\in S}\sup_{\pi^2\in \Pi^2_{ad}}\inf_{\pi^1\in \Pi^1_{ad}}\mathscr{J}^{\pi^1,\pi^2}(i,c).\label{L2.3E2.31}
		\end{equation}
\item[(ii)] For any finite set $\hat{\mathscr{B}}_1\supset \hat{\mathscr{B}}$, we have the following stochastic representation of the eigenfunction
		\begin{align}
		\psi^{*}(i)
		&=\inf_{\pi^1\in\Pi^1_{ad}}E^{\pi^{1},\pi^{*2}}_i\bigg[e^{\sum_{t=0}^{\check{\tau}(\hat{\mathscr{B}}_1)-1}(c(X_t,\pi^{1}_t,\pi^{*2}(X_t))-\rho^*)}\psi^{*}(X_{\check{\tau}(\hat{\mathscr{B}}_1)})\bigg]\nonumber\\
		&=\sup_{\pi^2\in \Pi^2_{ad}}E^{\pi^{*1},\pi^{2}}_i\bigg[e^{\sum_{t=0}^{\check{\tau}(\hat{\mathscr{B}}_1)-1}(c(X_t,\pi^{*1}(X_t),\pi^{2}_t)-\rho^*)}\psi^{*}(X_{\check{\tau}(\hat{\mathscr{B}}_1)})\bigg]~\forall i\in {\hat{\mathscr{B}}_1}^c.\label{eq 2.31}
		\end{align}
	\end{itemize}
\end{lemma}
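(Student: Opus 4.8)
The plan is to pass to the limit $n\to\infty$ in the Dirichlet eigenequation (\ref{eq 2.24}) furnished by Lemma \ref{lemm 2.3}. Throughout I normalize $\psi_n(i_0)=1$ (legitimate since $\psi_n(i_0)>0$ by Assumption \ref{assm 2.3}(ii)), and I recall from Lemma \ref{lemm 2.3} that $\{\rho_n\}$ is bounded, say $\rho_n\le C$, with $\liminf_n\rho_n\ge 0$.

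The first and enabling step is a uniform bound $\psi_n\le K\mathcal{W}$ with $K$ independent of $n$. Evaluating (\ref{eq 2.24}) at $i_0$ with a mini-max selector $(\pi^{*1}_n,\pi^{*2}_n)$ gives $e^{\rho_n}=e^{c(i_0,\pi^{*1}_n(i_0),\pi^{*2}_n(i_0))}\sum_{j}\psi_n(j)P(j|i_0,\pi^{*1}_n(i_0),\pi^{*2}_n(i_0))$, so using $c\ge0$ and $\rho_n\le C$ one obtains $\psi_n(j)\,P(j|i_0,\pi^{*1}_n(i_0),\pi^{*2}_n(i_0))\le e^{C}$ for every $j$. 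Since $(u,v)\mapsto P(j|i_0,u,v)$ is continuous and strictly positive on the compact set $U(i_0)\times V(i_0)$ (Assumption \ref{assm 2.3}(i)--(ii)), its infimum $p_j>0$, whence $\psi_n(j)\le e^{C}/p_j$ — a pointwise bound uniform in $n$. In particular $K_0:=\sup_n\max_{\hat{\mathscr{B}}}\psi_n<\infty$. For the tail, on $\hat{\mathscr{B}}^c$ one has $c-\rho_n<\tilde\gamma$ (case (a), by (\ref{eq 2.32})) or $c-\rho_n<\tilde\ell$ (case (b), by (\ref{eq 2.33})); taking the outer maximizing selector $\pi^{*2}_n$ in (\ref{eq 2.24}) makes $M_t:=e^{\sum_{s=0}^{t-1}(c-\rho_n)}\psi_n(X_t)$ a submartingale under $(\pi^1,\pi^{*2}_n)$, and optional stopping at $\check{\tau}(\hat{\mathscr{B}})\wedge\tau(\tilde{\mathscr{D}}_n)\wedge N$ combined with the bound of Lemma \ref{lemm 2.4} yields $\psi_n(i)\le K_0\,\mathcal{W}(i)$ for $i\in\hat{\mathscr{B}}^c$. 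As $\mathcal{W}\ge1$, this gives $\psi_n\le K\mathcal{W}$ on all of $S$.

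With this bound a diagonal argument extracts a subsequence along which $\psi_n(i)\to\psi^*(i)$ for every $i$, $\rho_n\to\rho^*$ with $\rho^*\ge0$ (using $\liminf_n\rho_n\ge0$), and, by compactness of $\Pi^1_{SM},\Pi^2_{SM}$, the selectors converge to $(\pi^{*1},\pi^{*2})$. The limit satisfies $0\le\psi^*\le K\mathcal{W}$, so $\psi^*\in L^\infty_{\mathcal{W}}$, with $\psi^*(i_0)=1$. To pass the limit inside $\sup_\nu\inf_\mu$ in (\ref{eq 2.24}) I show that $(\mu,\nu)\mapsto e^{c(i,\mu,\nu)}\sum_j\psi_n(j)P(j|i,\mu,\nu)$ converges \emph{uniformly} on the compact space $\mathcal{P}(U(i))\times\mathcal{P}(V(i))$: the $\mathcal{W}$-bound lets me truncate the tail of the sum uniformly in $(\mu,\nu)$ (via Assumption \ref{assm 2.3}(iii)), while on the finite part pointwise convergence of $\psi_n$ and continuity of $P(j|i,\cdot,\cdot)$ give uniform convergence, with Lemma \ref{lemm 3.1} supplying the requisite continuity. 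Uniform convergence commutes with $\sup_\nu\inf_\mu$, and Fan's minimax theorem recovers the two equal forms, so $(\rho^*,\psi^*)$ solves (\ref{eq 2.30}). Positivity follows by propagating $\psi^*(i_0)=1$ backward: since $c\ge0$, iterating the saddle identity gives $\psi^*(i)\ge e^{-t\rho^*}\sum_j P^{t}(j|i,\pi^{*1},\pi^{*2})\psi^*(j)\ge e^{-t\rho^*}P^{t}(i_0|i,\pi^{*1},\pi^{*2})$, and irreducibility under $(\pi^{*1},\pi^{*2})$ (Assumption \ref{assm 2.2}) forces $P^{t}(i_0|i)>0$ for some $t$, hence $\psi^*>0$.

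Finally, for (i) I note that for each fixed $i$ one has $\psi_n(i)>0$ for large $n$, so (\ref{eq 2.16}) gives $\rho_n\le\sup_{\pi^2}\inf_{\pi^1}\mathscr{J}^{\pi^1,\pi^2}(i,c)$; letting $n\to\infty$ and then taking $\inf_i$ gives (\ref{L2.3E2.31}). For (ii) I fix a mini-max selector $(\pi^{*1},\pi^{*2})$ of (\ref{eq 2.30}). The inf-form of (\ref{eq 2.30}) makes $M_t=e^{\sum_{s=0}^{t-1}(c-\rho^*)}\psi^*(X_t)$ a submartingale under $(\pi^1,\pi^{*2})$ for every $\pi^1$, and a martingale when $\pi^1=\pi^{*1}$. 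For $i\in\hat{\mathscr{B}}_1^{\,c}$ the process stays in $\hat{\mathscr{B}}_1^{\,c}$ until $\check{\tau}(\hat{\mathscr{B}}_1)$, where $c-\rho^*<\tilde\ell$ (resp. $\tilde\gamma$); the bound $\psi^*\le K\mathcal{W}$ together with Lemma \ref{lemm 2.4} supplies the integrable dominating function $K\,e^{\sum_{s}\tilde\ell(X_s)}\mathcal{W}(X_{\cdot})$, so dominated convergence justifies optional stopping at $\check{\tau}(\hat{\mathscr{B}}_1)$. This gives $\psi^*(i)\le E^{\pi^1,\pi^{*2}}_i[\,\cdots\,]$ with equality at $\pi^{*1}$, i.e. the first line of (\ref{eq 2.31}); the symmetric sup-form argument under $(\pi^{*1},\pi^2)$ gives the second. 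I expect the \emph{main obstacle} to be the interchange of $\lim_n$ with $\sup_\nu\inf_\mu$, which hinges on the uniform $\mathcal{W}$-bound and the uniform tail control of the transition sums; a close second is ruling out the collapse $\psi^*\equiv0$, which is precisely where Assumption \ref{assm 2.3}(ii) and irreducibility enter.
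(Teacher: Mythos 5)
Your proof is correct (at the same level of rigor as the paper's own), but it reaches the limit equation (\ref{eq 2.30}) by a genuinely different route in the two core steps. For the uniform bound and nontriviality, the paper does not normalize at $i_0$: it rescales each $\psi_n$ by $\tilde{\theta}_n=\sup\{k>0:\mathcal{W}-k\psi_n>0~\text{in}~S\}$ and shows, via a Dynkin-formula contradiction resting on Lemma \ref{lemm 2.4}, that the rescaled $\psi_n$ must touch $\mathcal{W}$ at a point of the \emph{fixed} finite set $\hat{\mathscr{B}}$; this yields $\psi_n\le\mathcal{W}$ outright and, in the limit, $(\mathcal{W}-\psi^{*})(\hat{i}^{*})=0$ for some $\hat{i}^{*}\in\hat{\mathscr{B}}$, hence $\psi^{*}\ge 1$ somewhere --- the paper's substitute for your automatic $\psi^{*}(i_0)=1$. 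You instead extract the pointwise bounds $\psi_n(j)\le e^{C}/p_j$ from Assumption \ref{assm 2.3}(ii) and upgrade them to $\psi_n\le K\mathcal{W}$ off $\hat{\mathscr{B}}$ by a submartingale estimate; both are sound, yours leaning more heavily on Assumption \ref{assm 2.3}(ii), the paper's on the scaling trick. For the passage to the limit, the paper never interchanges $\lim_n$ with $\sup_\nu\inf_\mu$ at all: it fixes the convergent selectors $(\pi^{*1}_n,\pi^{*2}_n)$ and argues one-sidedly --- Fatou's lemma on the sup-form gives $e^{\rho^*}\psi^{*}(i)\ge\inf_\mu\sup_\nu[\cdots]$, dominated convergence (via the envelope (\ref{ForDomin1}) and Assumption \ref{assm 2.3}(iii)) on the inf-form gives $e^{\rho^*}\psi^{*}(i)\le\sup_\nu\inf_\mu[\cdots]$, and the elementary inequality $\inf_\mu\sup_\nu\ge\sup_\nu\inf_\mu$ closes the circle --- whereas you prove uniform convergence of the Hamiltonian on the compact set $\mathcal{P}(U(i))\times\mathcal{P}(V(i))$ (a Dini-type tail truncation from Assumption \ref{assm 2.3}(iii) combined with the $\mathcal{W}$-bound) and let uniformity commute with $\sup_\nu\inf_\mu$; your version is symmetric and delivers both forms of the Shapley equation in one stroke, at the price of an extra compactness argument that the paper avoids. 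Your positivity proof propagates positivity backward from $i_0$ along irreducible paths, where the paper propagates zeros forward to contradict nontriviality --- equivalent uses of irreducibility. Your parts (i) and (ii) coincide with the paper's arguments (submartingale/supermartingale optional stopping with Lemma \ref{lemm 2.4} supplying the domination), so no gap there either.
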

\begin{proof}
	First we scale $\psi_n$ in such a way that we obtain $\psi_n(i)\leq \mathcal{W}(i)$ for all $i\in S$. Set
$$\tilde{\theta}_n=\sup\{k>0:(\mathcal{W}-k\psi_n)>0~\text{in}~S\}.$$
Since $\psi_{n}$ vanishes in ${\tilde{\mathscr{D}}_n}^c$ and $\psi_{n}\gneq 0$, it follows that $\hat{\theta}_n$ is finite. We claim that if we replace $\psi_{n}$ by $\tilde{\theta}_n\psi_{n}$, then $\psi_{n}$ touches $\mathcal{W}$ inside $\hat{\mathscr{B}}$. If this is not true, then on the contrary, we assume that for some state $\hat{i}\in {\hat{\mathscr{B}}}^c \cap \tilde{\mathscr{D}}_n$, $(\mathcal{W}-\psi_{n})(\hat{i})=0$ and $\mathcal{W}-\psi_{n}>0$ in $\hat{\mathscr{B}}\cup \tilde{\mathscr{D}}_n^c$.
Let $\pi^{*2}_n$ be an outer maximizing selector of (\ref{eq 2.24}).
Then under Assumption \ref{assm 2.2} (b), applying Dynkin's formula (as in \cite[Lemma~3.1]{WC5}), we obtain
\begin{align*}
\psi_n(\hat{i})&\leq E^{\pi^1,\pi^{*2}_n}_{\hat{i}}\biggl[e^{\sum_{s=0}^{N\wedge\check{\tau}(\hat{\mathscr{B}})-1}(c(X_s,\pi^1_s,\pi^{*2}_n(X_s))-\rho_n)}\psi_n(X_{N\wedge \check{\tau}(\hat{\mathscr{B}})})I_{\{N\wedge\check{\tau}(\hat{\mathscr{B}})<{\tau}(\tilde{\mathscr{D}}_n)\}}\biggr]\\
&\leq E^{\pi^1,\pi^{*2}_n}_{\hat{i}}\biggl[e^{\sum_{s=0}^{N\wedge\check{\tau}(\hat{\mathscr{B}})-1}\tilde{\ell}(X_s)}\psi_n(X_{N\wedge \check{\tau}(\hat{\mathscr{B}})})I_{\{N\wedge\check{\tau}({\hat{\mathscr{B}}})<{\tau}(\tilde{\mathscr{D}}_n)\}}\biggr].
\end{align*}
Since $\psi_n\leq \mathcal{W}$ (by our scaling), in view of Lemma \ref{lemm 2.4}, by the dominated convergence theorem taking $N\rightarrow\infty$, we get
\begin{align*}
\psi_n(\hat{i})
\leq E^{\pi^1,\pi^{*2}_n}_{\hat{i}}\biggl[e^{\sum_{s=0}^{\check{\tau}(\hat{\mathscr{B}})-1}\tilde{\ell}(X_s)ds}\psi_n(X_{ \check{\tau}(\hat{\mathscr{B}})})\biggr].
\end{align*}
Combining this and (\ref{eq 2.22}), we get
\begin{align*}
0=(\mathcal{W}-\psi_n)(\hat{i})\geq E^{\pi^1,\pi^{*2}_n}_{\hat{i}}\bigg[e^{\sum_{s=0}^{\check{\tau}(\hat{\mathscr{B}})-1}\tilde{\ell}(X_s) ds}(\mathcal{W}-\psi_n)(X_{\check{\tau}(\hat{\mathscr{B}})})\bigg]>0.
\end{align*}
But this is a contradiction. Thus $\psi_n$ touches $\mathcal{W}$ inside $\hat{\mathscr{B}}$.
So, there exists a point  $i^*\in\hat{\mathscr{B}}$ such that $(\mathcal{W}-\psi_{n})(i^*)=0$, for all large $n$. Since $\psi_{n}\leq \mathcal{W}$, by diagonalization arguments, there exist a subsequence (here we use the same sequence by an abuse of notation), and a function $\psi^{*}\leq \mathcal{W}$ such that $\psi_{n}\rightarrow\psi^{*}$ as $n\rightarrow \infty$. Again, from Lemma \ref{lemm 2.3}, we know that the sequence $\{\rho_{n}\}$ is bounded and $\displaystyle \liminf_{n\rightarrow \infty}\rho_{n}\geq 0$, thus along a further subsequence we have $\rho_{n}\rightarrow \rho^*$ as $n\rightarrow \infty$ for some $\rho^* \geq 0$. Also, we have $(\mathcal{W}-\psi^{*})(\hat{i}^*)=0$ for some $\hat{i}^*\in \hat{\mathscr{B}}$. By the continuity-compactness assumptions, there exists a mini-max selector $(\pi_n^{*1},\pi_n^{*2})\in\Pi_{SM}^1\times \Pi_{SM}^2$  such that from (\ref{eq 2.24}), we get
\begin{align}
e^{\rho_{n}}\psi_{n}(i)&=\sup_{\nu\in \mathcal{P}(V(i))}\bigg[e^{c(i,\pi^{*1}_n(i)),\nu)}\sum_{j\in S}\psi_{n}(j)P(j|i,\pi^{*1}_n(i),\nu)\bigg]\nonumber\\
&=\inf_{\mu\in \mathcal{P}(U(i))}\bigg[e^{c(i,\mu,\pi^{*2}_n(i))}\sum_{j\in S}\psi_{n}(j)P(j|i,\mu,\pi^{*2}_n(i))\bigg].\label{T3.1E3.9}
\end{align}
Note that since $\psi_n\in L^\infty_{\mathcal{W}}$, we have
\begin{equation}\label{ForDomin1}
\sum_{j\in S}\psi_{n}(j){P}(j|i,u,v)\leq \sum_{j\in S}\mathcal{W}(j){P}(j|i,u,v)~\forall (i,u,v)\in \mathcal{K}.
\end{equation}
Since $\Pi_{SM}^1$ and $\Pi_{SM}^2$ are compact there exists $(\pi^{*1},\pi^{*2})\in\Pi_{SM}^1\times \Pi_{SM}^2$ such that $\pi^{*1}_n\rightarrow\pi^{*1}$ and $\pi^{*2}_n\rightarrow\pi^{*2}$ as $n\rightarrow\infty$.
Now from (\ref{T3.1E3.9}) we obtain,
\begin{equation}
e^{\rho_{n}}\psi_{n}(i)\geq \bigg[e^{c(i,\pi^{*1}_n(i),\nu)}\sum_{j\in S}\psi_{n}(j)P(j|i,\pi^{*1}_n(i),\nu)\bigg].\label{L2.3E2.36}
\end{equation}
Then, taking $n\rightarrow\infty$ from (\ref{L2.3E2.36})and ,using Lemma \ref{lemm 3.1}, the extended Fatou's lemma [\cite{HL}, Lemma 8.3.7], , we obtain
$$e^{\rho^*}\psi^{*}(i)\geq e^{c(i,\pi^{*1}(i),\nu)}\sum_{j\in S}\psi^{*}(j)P(j|i,\pi^{*1}(i),\nu).$$
Thus
\begin{align}
e^{\rho^*}\psi^{*}(i)&\geq \sup_{\nu\in \mathcal{P}(V(i))} \bigg[e^{c(i,\pi^{*1}(i),\nu)}\sum_{j\in S}\psi^{*}(j)P(j|i,\pi^{*1}(i),\nu)\biggr]\nonumber\\
&\geq\inf_{\mu\in \mathcal{P}(U(i))} \sup_{\mu\in \mathcal{P}(V(i))}\bigg[e^{c(i,\mu,\nu)}\sum_{j\in S}\psi^{*}(j)P(j|i,\mu,\nu)\biggr].\label{eq 2.34}
\end{align}
Also, from (\ref{T3.1E3.9}), we get
\begin{align*}
e^{\rho_{n}}\psi_{n}(i)\leq\bigg[e^{c(i,\mu,\pi^{*2}_n(i))}\sum_{j\in S}\psi_{n}(j)P(j|i,\mu,\pi^{*2}_n(i))\bigg].
\end{align*}
Using (\ref{ForDomin1}), by the dominated convergence theorem taking limit $n\rightarrow\infty$ in above equation, we deduce that
\begin{align}
e^{\rho^*}\psi^{*}(i)&\leq  \inf_{\mu\in \mathcal{P}(U(i))}\bigg[e^{c(i,\mu,\pi^{*2}(i))}\sum_{j\in S}\psi^{*}(j)P(j|i,\mu,\pi^{*2}(i))\biggr]\nonumber\\
& \leq \sup_{\nu\in \mathcal{P}(V(i))}\inf_{\mu\in \mathcal{P}(U(i))}\bigg[e^{c(i,\mu,\nu)}\sum_{j\in S}\psi^{*}(j)P(j|i,\mu,\nu)\biggr].\label{eq 2.35}
\end{align}
Hence by (\ref{eq 2.34}) and (\ref{eq 2.35}), we get (\ref{eq 2.30}).
Since we have $(\mathcal{W}-\psi^*)(\hat{i}^*)=0$ and $\mathcal{W}\geq 1$, it follows that $\psi^{*}$ is nontrivial. Next, we claim that $\psi^{*}>0$. If not, then on contrary there exists a point $\tilde{i}\in S$ for which $\psi^{*}(\tilde{i})=0$. Again by continuity-compactness assumptions, there exists a mini-max selector $(\pi^{*1},\pi^{*2})$ such that (\ref{eq 2.30}) can be rewritten as
\begin{equation*}
e^{\rho^*}\psi^{*}({i})=\bigg[e^{c({i},\pi^{*1}(i),\pi^{*2}(i))}\sum_{j\in S}\psi^{*}(j)P(j|{i},\pi^{*1}(i),\pi^{*2}(i))\biggr]~~~\forall i\in S.
\end{equation*}
So, we get
\begin{equation}\label{eq 2.37}
0 = e^{\rho^*}\psi^{*}(\tilde{i})=\bigg[e^{c(\tilde{i},\pi^{*1}(\tilde{i}),\pi^{*2}(\tilde{i}))}\sum_{j\in S}\psi^{*}(j)P(j|\tilde{i},\pi^{*1}(\tilde{i}),\pi^{*2}(\tilde{i}))\biggr].
\end{equation}
Since $\psi^{*}$ in nontrivial, there exists $\hat{i}\in S$ such that $\psi^{*}(\hat{i})>0$.
Again, since $X$ is irreducible under any pair of strategies $(\pi^{*1},\pi^{*2})\in \Pi^1_{SM}\times \Pi^2_{SM}$, there exists $i_1,i_2,\cdots,i_n\in S$ satisfying
\begin{align*}
P(\hat{i}|i_n,\pi^{*1}(i_n),\pi^{*2}(i_n))P(i_n|i_{n-1},\pi^{*1}(i_{n-1}),\pi^{*2}(i_{n-1}))\cdots P(i_1|\tilde{i},\pi^{*1}(\tilde{i}),\pi^{*2}(\tilde{i}))>0.
\end{align*}
Thus, from (\ref{eq 2.37}) we deduce that $\psi^*(\hat{i})=\psi^*(i_1)=\cdots=\psi^*(i_n)=\psi^*(\tilde{i})=0$.
But this contradicts to the fact that $\psi^{*}$ is nontrivial. This establishes our claim.

Next we prove (\ref{L2.3E2.31}). Since $\psi^{*}>0$ and $\psi_n(i)\rightarrow \psi^{*}(i)$ as $n\rightarrow \infty$. Hence for all  $n$ large enough, $\psi_{n}>0$. Therefore using (\ref{eq 2.16}), we have $\displaystyle \rho^{*}= \lim_{n\rightarrow \infty}\rho_{n}\leq \sup_{\pi^2\in \Pi^2_{ad}}\inf_{\pi^1\in \Pi^1_{ad}}\mathscr{J}^{\pi^1,\pi^2}(i,c)$ for all $i\in S$.

Finally we prove the stochastic representation (\ref{eq 2.31}) of $\psi^*$. As before there exists a pair of strategies $(\pi^{*1},\pi^{*2})\in \Pi_{SM}^1\times \Pi_{SM}^2$ satisfying
	\begin{align}
	e^{\rho^*}\psi^{*}(i)&= \sup_{\nu\in \mathcal{P}(V(i))}\bigg[e^{c(i,\pi^{*1}(i),\nu)}\sum_{j\in S}\psi^{*}(j)P(j|i,\pi^{*1}(i),\nu)\biggr]\nonumber\\
	&=\inf_{\mu\in \mathcal{P}(U(i))} \bigg[e^{c(i,\mu,\pi^{*2}(i))}\sum_{j\in S}\psi^{*}(j)P(j|i,\mu,\pi^{*2}(i))\biggr].\label{eq 2.38}
	\end{align}	
	Now for any finite set $\hat{\mathscr{B}}_1 \supset \hat{\mathscr{B}}$, applying Dynkin's formula (as in \cite[Lemma~3.1]{WC5}) from (\ref{eq 2.38}), we get
	\begin{equation*}
	\psi^{*}(i)\leq E^{\pi^{1},\pi^{*2}}_i\bigg[e^{\sum_{t=0}^{\check{\tau}(\hat{\mathscr{B}}_1)\wedge N-1}(c(X_t,\pi^{1}_t,\pi^{*2}(X_t))-\rho^*)}\psi^{*}(X_{\check{\tau}(\hat{\mathscr{B}}_1)\wedge N})\bigg]~\forall i\in{\hat{\mathscr{B}}_1}^c.
	\end{equation*}
	Since $\psi^{*}\leq \mathcal{W}$, using estimates of Lemma \ref{lemm 2.4}, by the dominated convergence theorem taking $N\rightarrow\infty$, it follows that
	\begin{equation}
	\psi^{*}(i)\leq  E^{\pi^{1},\pi^{*2}}_i\bigg[e^{\sum_{t=0}^{\check{\tau}(\hat{\mathscr{B}}_1)-1}(c(X_t,\pi^{1}_t,\pi^{*2}(X_t))-\rho^*)}\psi^{*}(X_{\check{\tau}(\hat{\mathscr{B}}_1)})\bigg]~\forall i\in {\hat{\mathscr{B}}_1}^c.\label{eq 2.41}
	\end{equation}
	Hence
	\begin{align}
	\psi^{*}(i)&\leq \inf_{\pi^1\in \Pi^1_{ad}} E^{\pi^{1},\pi^{*2}}_i\bigg[e^{\sum_{t=0}^{\check{\tau}(\hat{\mathscr{B}}_1)-1}(c(X_t,\pi^{1}_t,\pi^{*2}(X_t))-\rho^*)}\psi^{*}(X_{\check{\tau}(\hat{\mathscr{B}}_1)})\bigg]\nonumber\\
	&\leq  \sup_{\pi^2\in \Pi^2_{ad}}\inf_{\pi^1\in\Pi^1_{ad}}E^{\pi^1,\pi^{2}}_i\bigg[e^{\sum_{t=0}^{\check{\tau}(\hat{\mathscr{B}}_1)-1}(c(X_t,\pi^1_t,\pi^{2}_t)-\rho^*)}\psi^{*}(X_{\check{\tau}(\hat{\mathscr{B}}_1)})\bigg],~\forall i\in {\hat{\mathscr{B}}_1}^c.\label{eq 2.42}
	\end{align}
	Now using (\ref{eq 2.38}) and Dynkin's formula
	\begin{equation*}
	\psi^{*}(i)\geq E^{\pi^{*1},\pi^{2}}_i\bigg[e^{\sum_{t=0}^{\check{\tau}(\hat{\mathscr{B}}_1)\wedge N-1}(c(X_t,\pi^{*1}(X_t),\pi^{2}_t)-\rho^*)}\psi^{*}(X_{\check{\tau}(\hat{\mathscr{B}}_1)\wedge N})\bigg]~\forall i\in {\hat{\mathscr{B}}_1}^c.
	\end{equation*}
In view of Lemma \ref{lemm 2.4}	by Fatou's lemma taking $N\rightarrow\infty$, we get
	\begin{equation}
	\psi^{*}(i)\geq E^{\pi^{*1},\pi^{2}}_i\bigg[e^{\sum_{t=0}^{\check{\tau}(\hat{\mathscr{B}}_1)-1}(c(X_t,\pi^{*1}(X_t),\pi^{2}_t)-\rho^*)}\psi^{*}(X_{\check{\tau}(\hat{\mathscr{B}}_1)})\bigg],~\forall i\in {\hat{\mathscr{B}}_1}^c.\label{eq 2.39}
	\end{equation}
	Hence,
	\begin{align}
	\psi^{*}(i)&\geq\sup_{\pi^2\in \Pi^2_{ad}} E^{\pi^{*1},\pi^{2}}_i\bigg[e^{\sum_{t=0}^{\check{\tau}(\hat{\mathscr{B}}_1)-1}(c(X_t,\pi^{*1}(X_t),\pi^{2}_t)-\rho^*)}\psi^{*}(X_{\check{\tau}(\hat{\mathscr{B}}_1)})\bigg]\nonumber\\
	&\geq \inf_{\pi^1\in\Pi^1_{ad}}\sup_{\pi^1\in \Pi^1_{ad}}E^{\pi^1,\pi^{2}}_i\bigg[e^{\sum_{t=0}^{\check{\tau}(\hat{\mathscr{B}}_1)-1}(c(X_t,\pi^1_t,\pi^{2}_t)-\rho^*)}\psi^{*}(X_{\check{\tau}(\hat{\mathscr{B}}_1)})\bigg],~\forall i\in{\hat{\mathscr{B}}_1}^c.\label{eq 2.40}
	\end{align}
	From (\ref{eq 2.42}) and (\ref{eq 2.40}), we get eq. (\ref{eq 2.31}).
	\end{proof}
Next we prove the existence of the value of the game. To this end we first perturb the cost function as follows:
\begin{itemize}
	\item 	When Assumption \ref{assm 2.2} (a) holds: Let $\alpha_3>0$, be a small number satisfying $\|{c}\|_\infty+\alpha_3<\tilde{\gamma}$. Now we define
	$\tilde{c}_n(i,u,v)={c}(i,u,v)I_{\tilde{\mathscr{D}}_n}(i)+(\|{c}\|_\infty+\alpha_3)I_{\tilde{\mathscr{D}}_n^c}$ $\forall (u,v)\in U(i)\times V(i)$, $i\in S$.
	Note $\|\tilde{c}_n\|_{\infty}<\tilde{\gamma}$, where $\|\tilde{c}_n\|_{\infty}=\sup_{(i,u,v)\in \mathcal{K}}\tilde{c}_n(i,u,v)$.
	\item When Assumption \ref{assm 2.2} (b) holds: Define
	$\displaystyle \tilde{c}_n(i,u,v)={c}(i,u,v)+\frac{1}{n}[\tilde{\ell}(i)-\sup_{(u,v)\in U(i)\times V(i)}{c}(i,u,v)]_+$ $\forall (u,v)\in U(i)\times V(i)$, $i\in S$. Since the function $\displaystyle [\tilde{\ell}(\cdot)-\sup_{(u,v)\in U(\cdot)\times V(\cdot)}{c}(\cdot,u,v)]_+$  is norm-like function,  we have  $\displaystyle \tilde{\ell}-\sup_{(u,v)\in U(\cdot)\times V(\cdot)}\tilde{c}_n(\cdot,u,v)$ is norm-like for large enough $n$.
\end{itemize}
\begin{thm}\label{theo 3.1}
Suppose that Assumptions \ref{assm 2.2}, and \ref{assm 2.3} hold. Let $(\pi^{*1},\pi^{*2})\in \Pi_{SM}^1\times \Pi_{SM}^2$ be any mini-max selector of (\ref{eq 2.30}), i.e. $(\pi^{*1},\pi^{*2})\in \Pi_{SM}^1\times \Pi_{SM}^2$ satisfies
	\begin{align}
	e^{\rho^*}\psi^{*}(i)
	&= \sup_{\nu\in \mathcal{P}(V(i))}\inf_{\mu\in \mathcal{P}(U(i))}\bigg[e^{c(i,\mu,\nu)}\sum_{j\in S}\psi^{*}(j)P(j|i,\mu,\nu)\bigg]\nonumber\\
	&=\inf_{\mu\in \mathcal{P}(U(i))}\sup_{\nu\in \mathcal{P}(V(i))}\bigg[e^{c(i,\mu,\nu)}\sum_{j\in S}\psi^{*}(j)P(j|i,\mu,\nu)\bigg]\nonumber\\
	&=\inf_{\mu\in \mathcal{P}(U(i))}\bigg[e^{c(i,\mu,\pi^{*2}(i))}\sum_{j\in S}\psi^{*}(j)P(j|i,\mu,\pi^{*2}(i))\bigg]\nonumber\\
	&=\sup_{\nu\in \mathcal{P}(V(i))}\bigg[e^{c(i,\pi^{*1}(i),\nu)}\sum_{j\in S}\psi^{*}(j)P(j|i,\pi^{*1}(i),\nu)\bigg], ~i\in S.\label{eq 2.46}
	\end{align}
Then we have
		\begin{align}
	\rho^{*}&=\inf_{i\in S}\sup_{\pi^2\in \Pi^2_{ad}}\inf_{\pi^1\in \Pi^1_{ad}}\mathscr{J}^{\pi^1,\pi^2}(i,c)=\inf_{i\in S}\inf_{\pi^1\in \Pi^1_{ad}}\sup_{\pi^2\in \Pi^2_{ad}}\mathscr{J}^{\pi^1,\pi^2}(i,c)\nonumber\\
		&=\inf_{i\in S}\inf_{\pi^1\in \Pi^1_{ad}}\mathscr{J}^{\pi^{1},\pi^{*2}}(i,c)=\inf_{i\in S}\sup_{\pi^2\in \Pi^2_{ad}}\mathscr{J}^{\pi^{*1},\pi^{2}}(i,c) = \mathscr{J}^{\pi^{*1},\pi^{*2}}(i,c).	\label{E4.2}
	\end{align}
\end{thm}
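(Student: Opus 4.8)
The plan is to identify $\rho^*$ as the common value of the game by squeezing both the upper and the lower value between $\rho^*$ from the two sides, using the two one-sided selectors of the Shapley equation (\ref{eq 2.46}). Write $L(i):=\sup_{\pi^2\in\Pi^2_{ad}}\inf_{\pi^1\in\Pi^1_{ad}}\mathscr{J}^{\pi^1,\pi^2}(i,c)$ and $U(i):=\inf_{\pi^1\in\Pi^1_{ad}}\sup_{\pi^2\in\Pi^2_{ad}}\mathscr{J}^{\pi^1,\pi^2}(i,c)$, so that $L(i)\le U(i)$ and, by Lemma~\ref{L2.4}(i), $\rho^*\le\inf_{i}L(i)$. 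It therefore suffices to establish the two one-sided estimates, valid for every $i\in S$: (B) $\sup_{\pi^2\in\Pi^2_{ad}}\mathscr{J}^{\pi^{*1},\pi^2}(i,c)\le\rho^*$, and (A) $\inf_{\pi^1\in\Pi^1_{ad}}\mathscr{J}^{\pi^1,\pi^{*2}}(i,c)\ge\rho^*$. Indeed, (B) forces $U(i)\le\sup_{\pi^2}\mathscr{J}^{\pi^{*1},\pi^2}(i,c)\le\rho^*$ (since $\pi^{*1}\in\Pi^1_{ad}$), while (A) forces $L(i)\ge\inf_{\pi^1}\mathscr{J}^{\pi^1,\pi^{*2}}(i,c)\ge\rho^*$; hence $\rho^*\le L(i)\le U(i)\le\rho^*$ for all $i$, so the value exists, is independent of $i$, and equals $\rho^*$. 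Squeezing once more, $\rho^*\le\inf_{\pi^1}\mathscr{J}^{\pi^1,\pi^{*2}}(i,c)\le\mathscr{J}^{\pi^{*1},\pi^{*2}}(i,c)\le\sup_{\pi^2}\mathscr{J}^{\pi^{*1},\pi^2}(i,c)\le\rho^*$ gives $\mathscr{J}^{\pi^{*1},\pi^{*2}}(i,c)=\rho^*$, and every $\inf_{i}$ appearing in (\ref{E4.2}) collapses to $\rho^*$.

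To obtain the germ of (B) I would use the last line of (\ref{eq 2.46}): since $\pi^{*1}$ attains the outer supremum, $e^{c(i,\pi^{*1}(i),\nu)-\rho^*}\sum_j\psi^*(j)P(j|i,\pi^{*1}(i),\nu)\le\psi^*(i)$ for every $\nu$ and every $i$. Consequently, under $(\pi^{*1},\pi^2)$ for an arbitrary $\pi^2\in\Pi^2_{ad}$, the process $M_t:=e^{\sum_{s=0}^{t-1}(c(X_s,\pi^{*1}(X_s),\pi^2_s)-\rho^*)}\psi^*(X_t)$ is a nonnegative supermartingale, whence $E_i^{\pi^{*1},\pi^2}\!\big[e^{\sum_{s=0}^{T-1}(c-\rho^*)}\psi^*(X_T)\big]\le\psi^*(i)$ for all $T$. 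Symmetrically, the third line of (\ref{eq 2.46}) (where $\pi^{*2}$ attains the outer infimum) makes the same functional a submartingale under $(\pi^1,\pi^{*2})$, giving $E_i^{\pi^1,\pi^{*2}}\!\big[e^{\sum_{s=0}^{T-1}(c-\rho^*)}\psi^*(X_T)\big]\ge\psi^*(i)$, the germ of (A).

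The remaining, and main, difficulty is to pass from these multiplicative martingale estimates to the ergodic growth rate, i.e.\ to remove the boundary term $\psi^*(X_T)$, since $\psi^*$ is only $\mathcal{W}$-bounded and need not be bounded above or below by positive constants on all of $S$. Here I would invoke the stochastic representation (\ref{eq 2.31}) relative to a finite set $\hat{\mathscr{B}}_1\supset\hat{\mathscr{B}}$, on which $\psi^*$ is trapped between the positive constants $m:=\min_{\hat{\mathscr{B}}_1}\psi^*$ and $M:=\max_{\hat{\mathscr{B}}_1}\psi^*$. Applying the supermartingale (resp.\ submartingale) bound at the successive return times to $\hat{\mathscr{B}}_1$ and using the strong Markov property, each completed excursion contributes to $E[e^{\sum(c-\rho^*)}]$ a factor lying in $[m/M,\,M/m]$, uniformly in the opponent's strategy. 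Decomposing the path up to a deterministic horizon $T$ into completed excursions together with one terminal incomplete excursion, and controlling the terminal piece by the Lyapunov estimates of Lemma~\ref{lemm 2.4}, shows that $\tfrac1T\ln E_i^{\pi^{*1},\pi^2}\!\big[e^{\sum_{s=0}^{T-1}(c-\rho^*)}\big]\to0$ uniformly in $\pi^2$, and likewise on the submartingale side; this yields (B) and (A). In the unbounded-cost case this terminal/overshoot control is precisely where the strict norm-like drift built into the perturbed costs $\tilde c_n$ is needed: applying the whole construction to $\tilde c_n\ge c$, for which $\tilde\ell-\sup\tilde c_n$ is norm-like, keeps the excursion contributions exponentially negligible, and then letting $\tilde c_n\downarrow c$ transfers the estimate back to $c$ through the monotone convergence of the associated eigenvalues to $\rho^*$.

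I expect this excursion-decomposition and overshoot control, rather than the martingale inequalities themselves, to be the crux of the argument; the squeezing bookkeeping of the first paragraph and the martingale identities of the second are then routine, and the saddle-point inequalities (\ref{T2.23}) fall out of (A)–(B) together with Lemma~\ref{L2.4}(i).
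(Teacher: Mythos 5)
Your reduction of (\ref{E4.2}) to the two one-sided estimates (A) and (B), and the supermartingale/submartingale inequalities you extract from the third and fourth lines of (\ref{eq 2.46}), are correct, and your route is genuinely different from the paper's: the paper removes the boundary term not by excursion decomposition but by freezing $\pi^{*1}$, building auxiliary eigenpairs $(\hat{\rho}_n,\hat{\psi}_n)$ for the perturbed costs $\tilde{c}_n$, and using the fact that $\tilde{c}_n-\hat{\rho}_n\geq 0$ outside a finite set to prove $\hat{\psi}_n$ is bounded below by a positive constant on \emph{all} of $S$, after which the deterministic-horizon estimate is immediate. The first genuine gap is in your excursion bookkeeping: if each completed excursion between returns to $\hat{\mathscr{B}}_1$ contributes a conditional factor in $[m/M,\,M/m]$, then since the number $N(T)$ of completed excursions by a deterministic horizon $T$ can be of order $T$, multiplying these per-excursion bounds only gives $E_i\bigl[e^{\sum_{s=0}^{T-1}(c-\rho^*)}\bigr]\leq (M/m)^{N(T)}=e^{O(T)}$, i.e.\ $\tfrac1T\ln E\leq\ln(M/m)$, which is not $o(1)$; your conclusion $\tfrac1T\ln E\to 0$ does not follow. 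What works instead is the telescoped estimate $E_i\bigl[e^{\sum_{s=0}^{\sigma_N-1}(c-\rho^*)}\psi^*(X_{\sigma_N})\bigr]\leq\psi^*(i)$, in which the boundary values $\psi^*(X_{\sigma_k})$ cancel across excursions and the constant $m$ is invoked exactly once at the final stopping time. Moreover the two sides are not symmetric, contrary to your ``likewise'': on the supermartingale side the terminal incomplete excursion can indeed be bounded by Lemma \ref{lemm 2.4} together with (\ref{eq 2.32})--(\ref{eq 2.33}), but on the submartingale side one needs a \emph{lower} bound on $E\bigl[e^{\sum_{s=0}^{T-1}(c-\rho^*)}\bigr]$ at deterministic times, which no control of a terminal piece can produce; a separate argument is required (e.g.\ assuming $\mathscr{J}^{\pi^1,\pi^{*2}}(i,c)<\rho^*$ and summing the resulting geometric decay over the possible values of $\sigma_k$ to contradict $E\bigl[e^{\sum_{s=0}^{\sigma_k-1}(c-\rho^*)}\bigr]\geq\psi^*(i)/M$), and you supply none.

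The second, more serious gap is your treatment of the unbounded-cost case: you transfer the estimate from $\tilde{c}_n$ back to $c$ ``through the monotone convergence of the associated eigenvalues to $\rho^*$.'' That convergence is precisely what has to be proved and is the heart of the paper's proof, not a consequence of monotonicity. By construction the frozen-strategy eigenvalues satisfy $\hat{\rho}_n\geq\sup_{\pi^2\in\Pi^2_{ad}}\mathscr{J}^{\pi^{*1},\pi^2}(i,c)\geq\rho^*$, so monotone convergence only yields a limit $\hat{\rho}\geq\rho^*$; the entire difficulty is to exclude $\hat{\rho}>\rho^*$. The paper does this by passing to a limiting eigenpair $(\hat{\rho},\hat{\psi})$ for the cost $c$, writing the stochastic representations of $\hat{\psi}$ and of $\psi^*$ over a finite set (Lemma \ref{L2.4}(ii)), comparing them through the constant $\hat{k}_1=\min_{\hat{\mathscr{B}}_2}\psi^*/\hat{\psi}$, and invoking irreducibility to conclude that $\hat{\psi}$ is a scalar multiple of $\psi^*$, whence $\hat{\rho}=\rho^*$; the player-2 side is handled symmetrically via [\cite{BP}, Lemma 2.6]. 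Your proposal contains no substitute for this identification step, so as written the argument does not close. (If you instead run the corrected, telescoped excursion estimate directly with $(\rho^*,\psi^*)$ --- which is plausible, since positivity of $\psi^*$ is only needed on the finite set $\hat{\mathscr{B}}_1$ --- you could conceivably bypass both the perturbation and the identification step; but that is a different argument from the one you wrote, and its stopping-time and dominated-convergence details would still have to be checked against Lemma \ref{lemm 2.4}.)
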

\begin{proof}
	Arguing as Lemma \ref{L2.4}, for the stationary strategy $\pi^{*1}\in  \Pi_{SM}^1$,  there exists an eigenpair $(\hat{\psi}_n,\hat{\rho}_{n})\in \mathbb{R}_{+}\times L^\infty_W$ with
	$\hat{\psi}_n>0$ satisfying
	\begin{equation}
	e^{\hat{\rho}_{n}}\hat{\psi}_n(i)=\sup_{\nu\in \mathcal{P}(B(i))}\bigg[e^{{\tilde{c}_n(i,\pi^{*1}(i),\nu)}}\sum_{j\in S}\hat{\psi}_n(j)P(j|i,\pi^{*1}(i),\nu)\bigg]\label{eq 2.48}
	\end{equation}
	such that
	\begin{equation}
	0\leq \hat{\rho}_{n}\leq\sup_{\pi^2\in \Pi^2_{ad}} \mathscr{J}^{\pi^{*1},\pi^{2}}(i,\tilde{c}_n).\label{eq 2.49}
	\end{equation}
	Also,
	\begin{equation}
	\hat{\psi}_n(i)=\sup_{\pi^2\in \Pi^2_{ad}}E^{\pi^{*1},\pi^{2}}_i\bigg[e^{\sum_{t=0}^{\check{\tau}({\hat{\mathscr{B}}_1})-1}(\tilde{c}_n(X_t,\pi^{*1}(X_t),\pi^{2}_t)-\hat{\rho}_{n})}\hat{\psi}_n(X_{\check{\tau}(\hat{\mathscr{B}}_1)})\bigg],~i\in {\hat{\mathscr{B}}_1}^c,\label{eq 2.45}
	\end{equation}
	for some finite set ${\hat{\mathscr{B}}}_1$ containing ${\hat{\mathscr{B}}}$.
	Now as in Lemma \ref{L2.4}, we have a finite set $\tilde{\mathscr{B}}_1$, depending on $n$, containing $\tilde{\mathscr{K}}$ such that the following happen:
	\begin{itemize}
		\item Under Assumption \ref{assm 2.2} (a): From (\ref{eq 2.49}), we have $\hat{\rho}_{n}\leq \|\tilde{c}_n\|_\infty$. So, from the above definition of $\tilde{c}_n$, for $i\in {\hat{\mathscr{D}}_n}^c$, we have $\tilde{c}_n(i,u,v)-\hat{\rho}_{n}\geq 0$ for all $(u,v)\in U(i)\times V(i)$. Consequently, we may take $\tilde{\mathscr{B}}_1=\hat{\mathscr{D}}_n$ such that $\tilde{c}_n(i,u,v)-\hat{\rho}_{n}\geq 0$ in $\tilde{\mathscr{B}}_1^c$ for all $(u,v)\in U(i)\times V(i)$.
		\item  Under Assumption \ref{assm 2.2} (b): since $\tilde{c}_n$ is norm-like function, we can choose suitable finite set $\tilde{\mathscr{B}}_1$ such that $(\tilde{c}_n(i,u,v)-\hat{\rho}_{n})\geq 0$ in $\tilde{\mathscr{B}}_1^c$ for all $(u,v)\in U(i)\times V(i)$.
	\end{itemize}
	From (\ref{eq 2.48}), we obtain
	\begin{equation}
	\hat{\psi}_n(i)\geq \bigg[e^{{(\tilde{c}_n(i,\pi^{*1}(i),\nu)-\hat{\rho}_{n})}}\sum_{j\in S}\hat{\psi}_n(j)P(j|i,\pi^{*1}(i),\nu)\bigg].\label{L2.4E2.53}
	\end{equation}
	By Dynkin's formula from (\ref{L2.4E2.53}), we deduce that
	\begin{equation*}
	\hat{\psi}_n(i)\geq E^{\pi^{*1},\pi^{2}}_i\bigg[e^{\sum_{t=0}^{\check{\tau}(\tilde{\mathscr{B}}_1)\wedge N-1}(\tilde{c}_n(X_t,\pi^{*1}(X_t),\pi^{2}_t)-\hat{\rho}_{n})}\hat{\psi}_n(X_{\check{\tau}(\tilde{\mathscr{B}}_1)\wedge N})\bigg].
	\end{equation*}
	Since $\tilde{c}_n(i,u,v)-\hat{\rho}_{n}\geq 0$, in $\tilde{\mathscr{B}}_1^c$, for all $(u,v)\in U(i)\times V(i)$, by Fatou lemma taking $N\rightarrow \infty$, we obtain
	\begin{align*}
	\hat{\psi}_n(i)&\geq E^{\pi^{*1},\pi^{2}}_i\bigg[e^{\sum_{t=0}^{\check{\tau}(\tilde{\mathscr{B}}_1)-1}(\tilde{c}_n(X_t,\pi^{*1}(X_t),\pi^{2}_t)-\hat{\rho}_{n})}\hat{\psi}_n(X_{\check{\tau}(\tilde{\mathscr{B}}_1)})\bigg]\geq \min_{\tilde{\mathscr{B}}_1}\hat{\psi}_n ~ \forall ~i\in \tilde{\mathscr{B}}_1^c.
	\end{align*}
	So, $\hat{\psi}_n$ has a lower bound.
	Again by Dynkin's formula from (\ref{eq 2.48}), we get
	\begin{align*}
	\hat{\psi}_n(i)&\geq  E^{\pi^{*1},\pi^{2}}_i\bigg[e^{\sum_{t=0}^{T\wedge{\tau}(\tilde{\mathscr{D}}_m)-1}(\tilde{c}_n(X_t,\pi^{*1}(X_t),\pi^{2}_t)-\hat{\rho}_{n})}\hat{\psi}_n(X_{T\wedge{\tau}(\tilde{\mathscr{D}}_m)})\bigg].
	\end{align*}
	By Fatou's lemma, taking $m\rightarrow\infty$, we obtain
	\begin{align*}
	\hat{\psi}_n(i)&\geq  E^{\pi^{*1},\pi^{2}}_i\bigg[e^{\sum_{t=0}^{T-1}(\tilde{c}_n(X_t,\pi^{*1}(X_t),\pi^{2}_t)-\hat{\rho}_{n})}\hat{\psi}_n(X_{T})\bigg]\\ &\geq (\min_{\tilde{\mathscr{B}}_1}\hat{\psi}_n)E^{\pi^{*1},\pi^{2}}_i\bigg[e^{\sum_{t=0}^{T-1}(\tilde{c}_n(X_t,\pi^{*1}(X_t),\pi^{2}_t)-\hat{\rho}_{n})}\bigg].
	\end{align*}
	So, taking logarithm both sides, dividing by $T$ and letting $T\rightarrow\infty$, we deduce that
$$\hat{\rho}_{n}\geq\mathscr{J}^{\pi^{*1},\pi^{2}}(i,\tilde{c}_n).$$
	Since $\pi^2\in \Pi^2_{ad}$ is arbitrary,
	\begin{align*}
	\hat{\rho}_{n}&\geq\sup_{\pi^2\in \Pi^2_{ad}} \mathscr{J}^{\pi^{*1},\pi^{2}}(i,\tilde{c}_n)\geq\sup_{\pi^2\in \Pi^2_{ad}} \mathscr{J}^{\pi^{*1},\pi^{2}}(i,{c}).
	\end{align*}
	Using this and (\ref{eq 2.49}), we get $\displaystyle \sup_{\pi^2\in \Pi^2_{ad}}\mathscr{J}^{\pi^{*1},\pi^{2}}(i,c)\leq\sup_{\pi^2\in \Pi^2_{ad}} \mathscr{J}^{\pi^{*1},\pi^{2}}(i,\tilde{c}_n)=\hat{\rho}_{n}$ for all $n$. Now arguing as before, it follows that $\hat{\psi}_n\leq \mathcal{W}$ and it touches $\mathcal{W}$ (by suitable scaling). Also, we note  from the definition of $\tilde{c}_n$ that $\hat{\rho}_{n}$ is a monotone decreasing sequence bounded below. Thus, using diagonalization arguments, there exists subsequence (denoting the the same sequence) and a pair $(\hat{\rho},\hat{\psi})$ such that $\hat{\rho}_{n}\rightarrow \hat{\rho}$ and $\hat{\psi}_n\rightarrow\hat{\psi}$ as $n\rightarrow\infty$.
	Now applying similar arguments as Lemma \ref{L2.4}, taking $n\rightarrow\infty$ in (\ref{eq 2.48}) we get
	\begin{equation}
	e^{{\hat{\rho}}}\hat{\psi}(i)=\sup_{\nu\in \mathcal{P}(V(i))}\bigg[e^{{{c}(i,\pi^{*1}(i),\nu)}}\sum_{j\in S}\hat{\psi}(j)P(j|i,\pi^{*1}(i),\nu)\bigg].\label{eq 2.50}
	\end{equation}
	We also have $\displaystyle \lim_{n\rightarrow \infty}\hat{\rho}_{n}=\hat{\rho}\geq\sup_{\pi^2\in \Pi^2} \mathscr{J}^{\pi^{*1},\pi^{2}}(i,c)\geq \rho^*$. So, we have to prove $\hat{\rho}=\rho^*$.
	By continuity-compactness assumptions, there exists $\hat{\pi}^{*2}$ such that (\ref{eq 2.50}) can be rewritten as
	\begin{align}
	e^{{\hat{\rho}}}\hat{\psi}(i)=\bigg[e^{{{c}(i,{\pi}^{*1}(i),\hat{\pi }^{*2}(i))}}\sum_{j\in S}\hat{\psi}(j)P(j|i,{\pi}^{*1}(i),\hat{\pi}^{*2}(i))\bigg].\label{E2.50}
	\end{align}
By Dynkin's formula, for some $\hat{\mathscr{B}}_2$ containing $\hat{\mathscr{B}}$, we have
	\begin{align}
\hat{\psi}(i)=E^{{\pi}^{*1},\hat{\pi}^{*2}}_i\bigg[e^{\sum_{t=0}^{\check{\tau}({\hat{\mathscr{B}}_2})\wedge N-1}({c}(X_t,{\pi}^{*1}(X_t),\hat{\pi }^{*2}(X_t))-\hat{\rho})}\hat{\psi}(X_{(\check{\tau}({\hat{\mathscr{B}}_2})\wedge N)})\bigg],~\forall i\in {{\hat{\mathscr{B}}_2}}^c.\label{E3.9}
\end{align}
Using the estimates of Lemma \ref{lemm 2.4} and the dominated convergence theorem, taking $N\rightarrow \infty$ in (\ref{E3.9}), we have
	\begin{align}
	\hat{\psi}(i)=E^{{\pi}^{*1},\hat{\pi }^{*2}}_i\bigg[e^{\sum_{t=0}^{\check{\tau}({\hat{\mathscr{B}}_2})-1}({c}(X_t,{\pi}^{*1}(X_t),\hat{\pi }^{*2}(X_t))-\hat{\rho})}\hat{\psi}(X_{\check{\tau}({\hat{\mathscr{B}}_2})})\bigg],~\forall i\in {{\hat{\mathscr{B}}_2}}^c.\label{eq 2.51}
	\end{align}
	Since $\hat{\rho}\geq \rho^*$, from (\ref{eq 2.31}) we have
	\begin{align}
	\psi^{*}(i)\geq E^{{\pi}^{*1},\hat{\pi}^{*2}}_i\bigg[e^{\sum_{t=0}^{\check{\tau}({{\hat{\mathscr{B}}}_2})-1}({c}(X_t,{\pi}^{*1}(X_t),\hat{\pi}^{*2}(X_t))-\hat{\rho})}\psi^{*}(X_{\check{\tau}({\hat{\mathscr{B}}_2})})\bigg]~\forall i\in {{\hat{\mathscr{B}}}_2}^c.\label{eq 2.52}
	\end{align}
	Hence from (\ref{eq 2.51}) and (\ref{eq 2.52}), we get
	\begin{align}
	\psi^{*}(i)-\hat{k}_1\hat{\psi}(i)\geq E^{{\pi}^{*1},\hat{\pi}^{*2}}_i\bigg[e^{\sum_{t=0}^{\check{\tau}({\mathscr{B}}_2)-1}({c}(X_t,{\pi}^{*1}(X_t),\hat{\pi}^{*2}(X_t))-\hat{\rho})}(\psi^{*}-\hat{k}_1\hat{\psi})(X_{\check{\tau}(\hat{\mathscr{B}}_2)})\bigg]~\forall i\in {\hat{\mathscr{B}}_2}^c.\label{eq 2.53}
	\end{align}
Let $\hat{k}_1 = \displaystyle \min_{\hat{\mathscr{B}}_2}\frac{\psi^{*}}{\hat{\psi}}$, thus we have $(\psi^{*}-\hat{k}_1\hat{\psi}) \geq 0$ in $\hat{\mathscr{B}}_2$ and for some $\hat{i}_0\in \hat{\mathscr{B}}_2,$ \,$(\psi^{*}-\hat{k}_1\hat{\psi})(\hat{i}_0) = 0$. Therefore, from (\ref{eq 2.53}), we obtain that $(\psi^{*}-\hat{k}_1\hat{\psi}) \geq 0$ in $S$. Now since $\hat{\rho}\geq \rho^*$, from (\ref{eq 2.46}) and (\ref{E2.50}), we deduce that
	\begin{align*}
	e^{\hat{\rho}}(\psi^{*} - \hat{k}_1\hat{\psi})(\hat{i}_0) \geq \biggl[e^{ c(\hat{i}_0,{\pi}^{*1}(i_0),\hat{\pi}^{*2}(i_0))}\sum_{j\in S}(\psi^{*} -\hat{k}_1\hat{\psi})(j)P(j|\hat{i}_0 ,\pi^{*1}(\hat{i}_0),\hat{\pi}^{*2}(\hat{i}_0)) \biggr].	
	\end{align*} This implies that
	\begin{align*}
	0=\sum_{j\in S}(\psi^{*} -\hat{k}_1\hat{\psi})(j)P(j|\hat{i}_0 ,\pi^{*1}(\hat{i}_0),\hat{\pi}^{*2}(\hat{i}_0)).	
	\end{align*}
Thus, in view of irreducibility property of the Markov chain under stationary Markov strategies, it follows that $\psi^{*}=\hat{k}_1\hat{\psi}$ in $S$. Hence from (\ref{eq 2.46}) and (\ref{eq 2.50}), it is easy to see that $\hat{\rho}=\rho^* = \mathscr{J}^{\pi^{*1},\pi^{*2}}(i,c)$ for all $i \in S$. Therefore, we obtain
	\begin{align}
	\rho^{*}&=\inf_{i\in S}\sup_{\pi^2\in \Pi^2_{ad}}\inf_{\pi^1\in \Pi^1_{ad}}\mathscr{J}^{\pi^1,\pi^2}(i,c)=\inf_{i\in S}\inf_{\pi^1\in \Pi^1_{ad}}\sup_{\pi^2\in \Pi^2_{ad}}\mathscr{J}^{\pi^1,\pi^2}(i,c)=\inf_{i\in S}\sup_{\pi^2\in \Pi^2_{ad}}\mathscr{J}^{\pi^{*1},\pi^{2}}(i,c) = \mathscr{J}^{\pi^{*1},\pi^{*2}}(i,c).
	\label{E3.12}
	\end{align}
Now arguing as in [\cite{BP}, Lemma 2.6], it follows that for $\pi^{*2}\in \Pi_{SM}^2$, there exists $({\psi}^{'},{\rho}^{'})\in  L^\infty_{\mathcal{W}} \times\mathbb{R}_{+}$, ${\psi}^{'}>0$ satisfying
	\begin{equation}
	e^{{\rho}^{'}}{\psi}^{'}(i)=\inf_{\mu\in \mathcal{P}(U(i))}\bigg[e^{{c}(i,\mu,\pi^{*2}(i))}\sum_{j\in S}{\psi}^{'}(j)P(j|i,\mu,\pi^{*2}(i))\bigg],\label{eq 3.12}
	\end{equation}
	with
	\begin{equation}
	{\rho}^{'}= \inf_{i\in S}\inf_{\pi^1\in\Pi^1_{ad}}\mathscr{J}^{\pi^{1},\pi^{*2}}(i,c).\label{eq 3.13}
	\end{equation}
Thus, we have
	\begin{equation}
	\rho^{'} = \inf_{i\in S}\inf_{\pi^1\in\Pi^1_{ad}}\mathscr{J}^{\pi^{1},\pi^{*2}}(i,c)\leq \inf_{i\in S}\sup_{\pi^2\in \Pi^2_{ad}}\inf_{\pi^1\in\Pi^1_{ad}}\mathscr{J}^{\pi^{1},\pi^{2}}(i,c) = \rho^*.\label{eq E3.14E}
\end{equation}
For any minimizing selector $\tilde{\pi}^{*1}$ of (\ref{eq 3.12}) we obtain
	\begin{equation}
	e^{{\rho}^{'}}{\psi}^{'}(i)=\bigg[e^{{c}(i,\tilde{\pi}^{*1}(i),\pi^{*2}(i))}\sum_{j\in S}{\psi}^{'}(j)P(j|i,\tilde{\pi}^{*1}(i),\pi^{*2}(i))\bigg].\label{eq 3.15}
	\end{equation}
		Also, arguing as in Lemma \ref{L2.4}, for some finite set $\hat{\mathscr{B}}_3 \supset \hat{\mathscr{B}}$\,, we deduce that
	\begin{equation}
	{\psi}^{'}(i)= E^{\tilde{\pi}^{*1},\pi^{*2}}_i\bigg[e^{\sum_{t=0}^{\check{\tau}({\hat{\mathscr{B}}_3})-1}({c}(X_t,\tilde{\pi}^{*1}(X_t)),\pi^{*2}(X_t))-{\rho}^{'})dt}{\psi}^{'}(X_{\check{\tau}(\hat{\mathscr{B}}_3)})\bigg],~i\in {\hat{\mathscr{B}}_3}^c.\label{eq 3.17}
	\end{equation}
From (\ref{eq 2.46}), we have
	\begin{align}
	e^{\rho^*}\psi^{*}(i)
	\leq\bigg[e^{c(i,\tilde{\pi}^{*1}(i),\pi^{*2}(i))}\sum_{j\in S}\psi^{*}(j)P(j|i,\tilde{\pi}^{*1}(i),\pi^{*2}(i))\bigg], ~i\in S\,.\label{eq 3.16}
	\end{align}
Also, from (\ref{eq 2.31}), it follows that
	\begin{align}
	\psi^{*}(i)\leq
	E^{\tilde{\pi}^{*1},\pi^{*2}}_i\bigg[e^{\sum_{t=0}^{\check{\tau}(\mathscr{B}_3)-1}(c(X_t,\tilde{\pi}^{*1}(X_t),\pi^{*2}(X_t))-\rho^*)dt}\psi^{*}(X_{\check{\tau}(\mathscr{B}_3)})\bigg]~\forall i\in {\hat{\mathscr{B}}_3}^c\,.\label{eq 3.18}
	\end{align}
	Therefore, by analogous arguments as above, using irreducibility property of the Markov chain we get $\psi^{'}=\hat{k}_2 \psi^*$, for some positive constant $\hat{k}_2$. Thus, from (\ref{eq 2.46}) and (\ref{eq 3.12}), it follows that
	\begin{equation}
	\rho^*=\rho^{'}.\label{eq 3.19}
	\end{equation}
	Hence, by (\ref{E3.12}) and (\ref{eq 3.19}), we obtain (\ref{E4.2}). This completes the proof of the theorem.
	\end{proof}
\begin{remark}
As in the proof of Theorem~\ref{theo 3.1}, exploiting the stochastic representation of $\psi^{*}$ and irreducibility of the Markov chain, it is easy to see that $\psi^{*}$ is unique solution of (\ref{eq 2.30}) (upto a multiplicative constant). If we set $\psi^{*}(i_0) = 1$, where $i_0$ is the reference state in Assumption 2.2(ii), then  $\psi^{*}$ is unique.
\end{remark}
Next we prove the converse of the above theorem.
\begin{thm}\label{theo 3.3}
Suppose Assumptions \ref{assm 2.2}, and \ref{assm 2.3} hold. Suppose there exists a saddle point equilibrium $(\hat{\pi}^{*1},\hat{\pi}^{*2})\in \Pi_{SM}^1\times \Pi_{SM}^2$\,, i.e., for all $i\in S$\,,
	\begin{align}
 &\mathscr{J}^{\hat{\pi}^{*1},\hat{\pi}^{*2}}(i,c)\leq \mathscr{J}^{\pi^1,\hat{\pi}^{*2}}(i,c),~\text{for all}~\pi^1\in \Pi^1_{ad},\nonumber\\
 &\mathscr{J}^{\hat{\pi}^{*1},\hat{\pi}^{*2}}(i,c)\geq 	\mathscr{J}^{\hat{\pi}^{*1},\pi^{2}}(i,c) ,~\text{for all}~\pi^2\in \Pi^2_{ad}.\label{ET3.2}
	\end{align}
	Then $(\hat{\pi}^{*1},\hat{\pi}^{*2})$ is a mini-max selector of (\ref{eq 2.30}).
\end{thm}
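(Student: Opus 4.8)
The plan is to show that the given saddle pair $(\hat{\pi}^{*1},\hat{\pi}^{*2})$ attains the inner infimum and supremum in the Shapley equation \eqref{eq 2.30}, i.e. that it satisfies \eqref{eq 2.46}. I would treat the two players separately and symmetrically, fixing one player's stationary strategy and analyzing the resulting one-controller eigenvalue problem, then identifying its principal eigenfunction with $\psi^{*}$.

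First I would record the value identity. Combining the saddle-point inequalities \eqref{ET3.2} with $\mathscr{L}(i)\le\mathscr{U}(i)$ gives, by the usual squeeze, $\mathscr{L}(i)=\mathscr{U}(i)=\mathscr{J}^{\hat{\pi}^{*1},\hat{\pi}^{*2}}(i,c)$ for every $i\in S$; by Theorem~\ref{theo 3.1} this common value equals $\rho^{*}$. In particular $\inf_{\pi^1\in\Pi^1_{ad}}\mathscr{J}^{\pi^1,\hat{\pi}^{*2}}(i,c)=\rho^{*}$ and $\sup_{\pi^2\in\Pi^2_{ad}}\mathscr{J}^{\hat{\pi}^{*1},\pi^2}(i,c)=\rho^{*}$ for all $i$.

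Next, fix $\hat{\pi}^{*2}$ and consider player~1's minimization problem. Arguing as in [\cite{BP}, Lemma~2.6] (exactly as in \eqref{eq 3.12}--\eqref{eq 3.13}), there is an eigenpair $(\rho',\psi')\in\mathbb{R}_{+}\times L^\infty_{\mathcal{W}}$, $\psi'>0$, with
\begin{equation*}
e^{\rho'}\psi'(i)=\inf_{\mu\in\mathcal{P}(U(i))}\Big[e^{c(i,\mu,\hat{\pi}^{*2}(i))}\sum_{j\in S}\psi'(j)P(j|i,\mu,\hat{\pi}^{*2}(i))\Big],\qquad \rho'=\inf_{i\in S}\inf_{\pi^1\in\Pi^1_{ad}}\mathscr{J}^{\pi^1,\hat{\pi}^{*2}}(i,c),
\end{equation*}
so the value identity yields $\rho'=\rho^{*}$. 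On the other hand, bounding the inner supremum over $\nu$ in \eqref{eq 2.30} from below by its value at $\nu=\hat{\pi}^{*2}(i)$ shows that $\psi^{*}$ is a \emph{supersolution} of the same equation, namely $e^{\rho^{*}}\psi^{*}(i)\ge \inf_{\mu}[\,e^{c(i,\mu,\hat{\pi}^{*2}(i))}\sum_{j}\psi^{*}(j)P(j|i,\mu,\hat{\pi}^{*2}(i))\,]$. Letting $\bar{\mu}$ be a minimizing selector of this last infimum and subtracting the corresponding inequality for $\psi'$ evaluated at the same $\bar{\mu}$, the function $w=\psi^{*}-k\psi'$ satisfies $e^{\rho^{*}}w(i)\ge e^{c(i,\bar{\mu}(i),\hat{\pi}^{*2}(i))}\sum_{j}w(j)P(j|i,\bar{\mu}(i),\hat{\pi}^{*2}(i))$. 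Choosing $k=\min_{\hat{\mathscr{B}}}\psi^{*}/\psi'$ so that $w\ge0$ on a suitable finite set $\hat{\mathscr{B}}$ with equality at some $\hat{i}_0\in\hat{\mathscr{B}}$, a Dynkin-formula argument together with the estimates of Lemma~\ref{lemm 2.4} (as in the derivation of \eqref{eq 2.53}) propagates $w\ge0$ to all of $S$; evaluating the supersolution inequality at $\hat{i}_0$ then forces $\sum_{j}w(j)P(j|\hat{i}_0,\bar{\mu}(\hat{i}_0),\hat{\pi}^{*2}(\hat{i}_0))=0$, and irreducibility (Assumption~\ref{assm 2.2}) gives $w\equiv0$, i.e. $\psi^{*}=k\psi'$. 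Substituting back, $\psi^{*}$ solves the $\inf_{\mu}$-equation exactly, which is precisely the assertion that $\hat{\pi}^{*2}$ is an outer maximizing selector of \eqref{eq 2.30}.

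The symmetric argument, now fixing $\hat{\pi}^{*1}$ and using the maximization eigenproblem (as in \eqref{eq 2.48}) with eigenvalue $\rho''=\inf_{i}\sup_{\pi^2}\mathscr{J}^{\hat{\pi}^{*1},\pi^2}(i,c)=\rho^{*}$, shows that $\psi^{*}$ is a \emph{subsolution} of the fixed-$\hat{\pi}^{*1}$ supremum equation and, by the same comparison together with irreducibility, coincides up to a positive constant with that problem's principal eigenfunction; hence $\hat{\pi}^{*1}$ is an outer minimizing selector. Combining the two facts shows that $(\hat{\pi}^{*1},\hat{\pi}^{*2})$ satisfies \eqref{eq 2.46}, i.e. it is a mini-max selector of \eqref{eq 2.30}. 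The main obstacle is the eigenfunction comparison: unlike in Theorem~\ref{theo 3.1}, where $\pi^{*2}$ was itself a mini-max selector and $\psi^{*}$ solved the fixed-strategy equation with equality, here $\psi^{*}$ only satisfies a one-sided (super/sub-solution) inequality, so the proportionality $\psi^{*}=k\psi'$ must be extracted from the matching eigenvalues $\rho'=\rho^{*}$, a carefully chosen common selector $\bar{\mu}$, and the touching-point/irreducibility mechanism rather than from an exact stochastic representation.
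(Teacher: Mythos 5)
Your proposal is correct and follows essentially the same route as the paper's proof: the squeeze argument giving $\inf_{\pi^1\in\Pi^1_{ad}}\mathscr{J}^{\pi^1,\hat{\pi}^{*2}}(i,c)=\sup_{\pi^2\in\Pi^2_{ad}}\mathscr{J}^{\hat{\pi}^{*1},\pi^{2}}(i,c)=\rho^{*}$ via Theorem~\ref{theo 3.1}, the fixed-strategy eigenpair from [\cite{BP}, Lemma 2.6] with eigenvalue $\rho^{*}$, and the super/sub-solution comparison (Dynkin formula, Lemma~\ref{lemm 2.4} estimates, touching point, irreducibility) forcing $\psi^{*}$ to be proportional to the fixed-strategy eigenfunction, hence the selector property. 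The only minor variation is that you run the comparison along the pair $(\bar{\mu},\hat{\pi}^{*2})$ with $\bar{\mu}$ a minimizing selector for $\psi^{*}$ in the fixed-$\hat{\pi}^{*2}$ equation, whereas the paper uses the Shapley mini-max selector $\pi^{*1}$ of (\ref{eq 2.46}) as the common strategy in (\ref{T3.3B})--(\ref{eq 3.33}); both choices produce the same one-sided inequality for the difference and lead to the same conclusion.
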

\begin{proof}
By Theorem \ref{theo 3.1} and (\ref{ET3.2}), we have
	\begin{align*}
	\rho^* = \inf_{\pi^1\in\Pi^1_{ad}}\sup_{\pi^2\in\Pi^2_{ad}}\mathscr{J}^{\pi^{1},\pi^{2}}(i,c) & \leq \sup_{\pi^2\in\Pi^2_{ad}}\mathscr{J}^{\hat{\pi}^{*1},\pi^{2}}(i,c) \leq \mathscr{J}^{\hat{\pi}^{*1},\hat{\pi}^{*2}}(i,c) \\
	&\leq \inf_{\pi^1\in\Pi^1_{ad}}\mathscr{J}^{\pi^{1},\hat{\pi}^{*2}}(i,c) \leq \sup_{\pi^2\in\Pi_{ad}^2} \inf_{\pi^1\in\Pi^1_{ad}} \mathscr{J}^{\pi^{1},\pi^{2}}(i,c) = \rho^*\,.
	\end{align*}
	This implies that $\displaystyle \rho^* = \mathscr{J}^{\hat{\pi}^{*1},\hat{\pi}^{*2}}(i,c) =\sup_{\pi^2\in\Pi^2_{ad}} \mathscr{J}^{\hat{\pi}^{*1},{\pi}^{2}}(i,c) = \inf_{\pi^1\in\Pi^1_{ad}}\mathscr{J}^{\pi^{1},\hat{\pi}^{*2}}(i,c)$\,. Now arguing as in Lemma \ref{L2.4} and Theorem \ref{theo 3.1}, it follows that for $\hat{\pi}^{*2}\in\Pi^2_{SM}$ there exists $(\rho^{\hat{\pi}^{*2}}, \psi_{\hat{\pi}^{*2}}^{*})\in \mathbb{R}_+\times L^\infty_{\mathcal{W}}$ with $\psi_{\hat{\pi}^{*2}}^{*} > 0$ such that
	\begin{align}
	e^{\rho^{\hat{\pi}^{*2}}}\psi_{\hat{\pi}^{*2}}^{*}(i)=\inf_{\mu\in \mathcal{P}(U(i))}\bigg[e^{c(i,\mu,\hat{\pi}^{*2}(i))}\sum_{j\in S}\psi_{\hat{\pi}^{*2}}^{*}(j)P(j|i,\mu,\hat{\pi}^{*2}(i))\bigg],\label{T3.3A}
	\end{align}
	and $\displaystyle\rho^{\hat{\pi}^{*2}}=\inf_{\pi^1\in\Pi^1}\mathscr{J}^{\pi^1,\hat{\pi}^{*2}}(i,c) = \rho^*$. Thus for $\pi^{*1}$ as in (\ref{eq 2.46}), we have
	\begin{align}
	e^{\rho^{\hat{\pi}^{*2}}}\psi_{\hat{\pi}^{*2}}^{*}(i) \leq \bigg[e^{c(i,\pi^{*1}(i),\hat{\pi}^{*2}(i))}\sum_{j\in S}\psi_{\hat{\pi}^{*2}}^{*}(j)P(j|i,\pi^{*1}(i),\hat{\pi}^{*2}(i))\bigg].\label{T3.3B}
	\end{align} Arguing as in Lemma \ref{L2.4}, for some finite set $\hat{\mathscr{B}}_4\supset \hat{\mathscr{B}}$ it follows that
	\begin{align}
	\psi_{\hat{\pi}^{*2}}^{*}(i) \leq  E^{{\pi}^{*1},\hat{\pi}^{*2}}_i\bigg[e^{\sum_{t=0}^{\check{\tau}(\mathscr{B}_4)-1}(c(X_t,\pi^{*1}(X_t),\hat{\pi}^{*2}(X_t))-\rho^*)}\psi_{\hat{\pi}^{*2}}^{*}(X_{\check{\tau}(\hat{\mathscr{B}}_4)})\bigg]~\forall i\in{ \hat{\mathscr{B}}_4}^c.\label{T3.3C}
	\end{align}
Also, from (\ref{eq 2.46}), we deduce that
\begin{align}
	e^{\rho^*}\psi^{*}(i) \geq \bigg[e^{c(i,\pi^{*1}(i),\hat{\pi}^{*2}(i))}\sum_{j\in S}\psi^{*}(j)P(j|i,\pi^{*1}(i),\hat{\pi}^{*2}(i))\bigg].\label{eq 3.34}
	\end{align}
	 By Dynkin's formula and Fatou's lemma (as in Lemma \ref{L2.4}), we obtain
	\begin{align}
\psi^{*}(i) \geq  E^{{\pi}^{*1},\hat{\pi}^{*2}}_i\bigg[e^{\sum_{t=0}^{\check{\tau}(\mathscr{B}_4)-1}(c(X_t,\pi^{*1}(X_t),\hat{\pi}^{*2}(X_t))-\rho^*)}\psi^{*}(X_{\check{\tau}(\hat{\mathscr{B}}_4)})\bigg]~\forall i\in{ \hat{\mathscr{B}}_4}^c.\label{eq 3.33}
\end{align}
	Now in view of (\ref{T3.3C}) and (\ref{eq 3.33}) and applying the same technique as before (as in Theorem~\ref{theo 3.1}), it follows that
	$\psi^{*}=\hat{k}_2 \psi_{\hat{\pi}^{*2}}^{*}$, for some constant $\hat{k}_2>0$. Hence from (\ref{eq 2.46}) and (\ref{T3.3A}), it is easy to see that $\hat{\pi}^{*2}$ is an outer maximizing selector of (\ref{eq 2.30}). Similarly, one can show that $\hat{\pi}^{*1}$ is an outer minimizing selector of (\ref{eq 2.30}). This completes the proof.
\end{proof}
Now we are ready to prove Theorem \ref{theo 2.2}.\\
\textbf{Proof of Theorem \ref{theo 2.2}:}
\begin{proof} Existence of an eigenpair $(\rho^*,\psi^{*})$ of eq. (\ref{T2.21}) follows from Lemma \ref{L2.4}. From Theorem \ref{theo 3.1}, we have Theorem \ref{theo 2.2}(i) and Theorem \ref{theo 2.2}(ii). Theorem \ref{theo 2.2}(iii) follows from Theorem \ref{theo 3.3}. This completes the proof.
	\end{proof}
\begin{remark}\label{R1} We can replace Assumption \ref{assm 2.3} (ii) by other similar assumption. For example, if the killed process communicates with every state in $\tilde{\mathscr{D}}_n$ from $i_0$ before leaving the domain $\tilde{\mathscr{D}}_n$, for large $n$, then our method applies. More precisely, we can replace Assumption \ref{assm 2.3} (ii) with the following: for all large n, we have \begin{equation*}
\inf_{(\pi^1,\pi^2)\in \Pi_{SM}^1 \times \Pi_{SM}^2}\mathbb{P}^{\pi^1,\pi^2}_{i_0}(\check{\tau}_j<{\tau}(\tilde{\mathscr{D}}_n))>0~\text{ for all}~j\in \tilde{\mathscr{D}}_n\backslash\{i_0\},
		\end{equation*} where $\check{\tau}_j$ denotes the hitting time to $j$. In other words, for every $\tilde{\mathscr{D}}_n$, $({\pi}^1,{\pi}^2)\in \Pi_{SM}^1\times \Pi_{SM}^2$ and for every $j\in \tilde{\mathscr{D}}_n\backslash \{i_0\}$, if there exists distinct $i_1,i_2,\cdots,i_m\in \tilde{\mathscr{D}}_n\backslash \{i_0\}$ satisfying $$P(i_1|i_0,\pi^1(i_0),\pi^2(i_0))P(i_2|i_1,\pi^1(i_1),\pi^2(i_1))\cdots P(j|i_m,\pi^1(i_m),\pi^2(i_m))>0,$$ then we get $\psi_{n}(i_0)>0$ in $\tilde{\mathscr{D}}_n$ (see Lemma ~\ref{lemm 2.3}). Also, the conclusion of Theorem~\ref{theo 3.1} holds.
\end{remark}
 \section{ Example}
We present here an illustrative example  in which all our assumptions hold, and  the cost function is nonnegative and unbounded.
\begin{example}
Consider a controlled birth-and-death system in
which the state variable stands for the total population size at time $t\geq 0$. Thus, the state space can be represented
by $ S:=\{0,1,2,\cdots\}$. Suppose that there are two players, player 1 and player 2, and they can control birth  and death, respectively. Depending on the number of population in the
system,  player 1 can modify the number of births  by choosing some action $u$, from the set $U(i)=[\delta,L_1]$. But this action
 results in a cost  given by $\tilde{c}_1(i,u)\geq 0$ (or a reward $\tilde{c}_1(i,u)\leq 0$), if $i$ is the state of the system. On the other hand, player 2 can modify the number of deaths
 by choosing some action $v$ from the set $V(i)=[\delta,L_1]$. The action of player 2 incurs a cost given by $\tilde{c}_2(i,v)\geq 0$ (or a reward $\tilde{c}_2(i,v)\leq 0$).  Also, in addition, assume that player 1 `owns' the system and he/she gets a  cost $r(i):=\hat{p}\cdot i$ for each unit of time during which the system remains in the state $i\in S$, where $\hat{p} >0$ is a fixed cost per population.


We next formulate this model as a discrete-time Markov game. The corresponding transition stochastic kernel ${P}(j|i,u,v)$ and reward ${{c}}(i,u,v)$ for player 1 are given as follows: for $(0,u,v)\in \mathcal{K}$ ($\mathcal{K}$ as in the game model (\ref{eq 2.1})).
	\begin{equation}
	\sum_{j\in S}{P}(j|0,u,v)=1,~\text{and}~{P}(j|0,u,v) = e^{-\frac{j^2}{3}-3}~\forall~j\geq 1.\label{eq 4.1}
	\end{equation}
		Similarly, for $(1,u,v)\in \mathcal{K}$,
	\begin{align}
	{{P}}(j|1,u,v)= \left\{ \begin{array}{lll}\displaystyle{}&1-\frac{3e^{-2}v}{2(L_1+L_2)},~~\text{if}~j=0\nonumber\\
	&\frac{e^{-2}v}{2(L_1+L_2)},~~\text{if}~j=1\nonumber\\
	&\frac{e^{-2}v}{2(L_1+L_2)}~~\text{if}~j=2\nonumber\\
	& \frac{e^{-2}v}{2(L_1+L_2)},~~\text{if}~j=3\nonumber\\
	&0,~~~~~~~~~~~\text{otherwise}.\displaystyle{}
	\end{array}\right.
	\end{align}
	Also, for $(i,u,v)\in \mathcal{K}$ with $i\geq 2$,
\begin{align}
{P}(j|i,u,v)= \left\{ \begin{array}{lll}\displaystyle{}
&\frac{ue^{-i}}{2(L_1+L_2)},
~\text{if}~j=i-1\nonumber\\
&\frac{ue^{-i}+ve^{-2i}}{2(L_1+L_2)}~\text{if}~j=i\nonumber\\
& \frac{ve^{-2i}}{2(L_1+L_2)},~\text{if}~j=i+1\nonumber\\
&1-\frac{2(ue^{-i}+ve^{-2i})}{2(L_1+L_2)},\text{if}~j=0\nonumber\\
&0,~~~~~\text{otherwise}.\displaystyle{}
\end{array}\right.
\end{align}
\begin{align}
{c}(i,u,v):=\hat{p}\cdot i+\tilde{c}_1(i,u)-\tilde{c}_2(i,v)~\text{ for }~(i,u,v)\in \mathcal{K}.\label{eq 4.2}
\end{align}
	We make the following assumptions to ensure the existence of  a pair of optimal strategies.
	\begin{enumerate}
		\item [(I)] The functions $\tilde{c}_1(i,u)$, and $\tilde{c}_2(i,v)$ are continuous with their respective variables for each fixed $i\in S$.
		\item [(II)] Suppose that $\hat{p}\cdot i+\tilde{c}_1(i,u)-\tilde{c}_2(i,v)\geq 0$ for $(i,u,v)\in \mathcal{K}$ and $\hat{p} < \frac{1}{6}$.  Also, assume that $f$, is a norm-like function, where $f(i):=\displaystyle{\min_{(u,v)\in U(i)\times V(i)}}[\tilde{c}_2(i,v)-\tilde{c}_1(i,u)]$ for all $i\in S$.
	\end{enumerate}
\end{example}

\begin{proposition}\label{Prop 4.1}
	Under conditions (I)-(II), the above controlled system satisfies the Assumptions  \ref{assm 2.2} and \ref{assm 2.3}. Hence by Theorem \ref{theo 2.2}, there exists a saddle-point equilibrium for this controlled model.
\end{proposition}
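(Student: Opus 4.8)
The plan is to verify Assumptions~\ref{assm 2.2} and~\ref{assm 2.3} for the model (\ref{eq 4.1})--(\ref{eq 4.2}) and then invoke Theorem~\ref{theo 2.2}. Several items are immediate. Assumption~\ref{assm 2.3}(i) holds since $U(i)=V(i)=[\delta,L_1]$ are compact. For Assumption~\ref{assm 2.3}(ii) I take $i_0=0$: by (\ref{eq 4.1}) we have $P(j|0,u,v)=e^{-j^2/3-3}>0$ for every $j\neq 0$ and every $(u,v)$, as required. Continuity of $c(i,\cdot,\cdot)$ for fixed $i$ follows from condition (I) and the form (\ref{eq 4.2}). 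For the irreducibility in Assumption~\ref{assm 2.2}, note that for every $i\geq 1$ the probability of jumping to $0$ is strictly positive: for $i\geq 2$, $P(0|i,u,v)=1-\frac{ue^{-i}+ve^{-2i}}{L_1+L_2}>0$ because $u,v\leq L_1$ and $e^{-i}\leq e^{-2}$, and the case $i=1$ is analogous; while from $0$ one reaches every state by (\ref{eq 4.1}). Hence all states communicate through $0$ under any pair of stationary strategies, and the averaged kernels $P(\cdot|i,\mu,\nu)$ inherit this positivity.

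For Assumption~\ref{assm 2.3}(iii) and the stability condition I propose the exponential Lyapunov function $\mathcal{W}(i)=e^{\beta i}$ for a suitable $\beta\in(\hat p,1)$, which exists since $\hat p<\tfrac16<1$. Continuity of $(u,v)\mapsto\sum_j \mathcal{W}(j)P(j|i,u,v)$ is clear: for $i\geq 2$ only $\{0,i-1,i,i+1\}$ contribute and each transition probability is affine in $(u,v)$; at $i=1$ the sum is finite; and at $i=0$ the quantity $\sum_{j\geq 1}e^{\beta j}e^{-j^2/3-3}$ is a finite constant independent of $(u,v)$, the Gaussian tail dominating the exponential weight. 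The core computation is the drift: dividing by $\mathcal{W}(i)$ for $i\geq 2$ gives
\begin{align*}
\frac{1}{\mathcal{W}(i)}\sum_{j\in S}\mathcal{W}(j)P(j|i,u,v)
&=\frac{ue^{-i}(e^{-\beta}+1)+ve^{-2i}(1+e^{\beta})}{2(L_1+L_2)}
+e^{-\beta i}\Big(1-\tfrac{ue^{-i}+ve^{-2i}}{L_1+L_2}\Big).
\end{align*}
Taking the supremum over $(u,v)\in U(i)\times V(i)$ bounds this ratio by $C_1e^{-i}+e^{-\beta i}$ for some constant $C_1$, which tends to $0$ as $i\to\infty$. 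Consequently there is a finite set $\tilde{\mathscr{K}}$ (containing $0$ and $1$) and a function $\tilde\ell$ with $\tilde\ell(i)\sim\beta i$ outside $\tilde{\mathscr{K}}$ for which (\ref{eq 2.6}) holds, the finitely many exceptional states being absorbed into the constant $\tilde C$.

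It remains to check the two norm-like requirements. Since $f$ is norm-like by (II) and $\max_{(u,v)}[\tilde c_1(i,u)-\tilde c_2(i,v)]=-f(i)$, we have $\max_{(u,v)}c(i,u,v)=\hat p\, i-f(i)$, so that
$$\tilde\ell(i)-\max_{(u,v)\in U(i)\times V(i)}c(i,u,v)=\tilde\ell(i)-\hat p\, i+f(i).$$
Because $\beta>\hat p$, the linear part $\tilde\ell(i)-\hat p\, i\sim(\beta-\hat p)i$ tends to $+\infty$, and adding the nonnegative $f(i)$ keeps the difference norm-like; the same lower bound shows $\tilde\ell$ itself is norm-like. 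This verifies Assumption~\ref{assm 2.2}(b), and Proposition~\ref{Prop 4.1} then follows directly from Theorem~\ref{theo 2.2}.

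The main obstacle is the drift estimate together with the bookkeeping of the two norm-like conditions: one must confirm that the decay rate $\tilde\ell$ extracted from the Lyapunov inequality genuinely dominates the linear cost growth $\hat p\, i$ by a norm-like margin, and this is precisely where the hypotheses $\hat p<\tfrac16$ and the norm-likeness of $f$ enter. The remaining verifications---compactness, positivity of the reference row at $i_0=0$, continuity, and irreducibility---are routine.
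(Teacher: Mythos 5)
Your verification is correct, and it follows the same overall plan as the paper (check Assumptions~\ref{assm 2.2}(b) and~\ref{assm 2.3}, then invoke Theorem~\ref{theo 2.2}), but the technical heart differs in a genuine way: the choice of Lyapunov function. The paper takes the Gaussian-type weight $\mathcal{W}(i)=e^{\frac{i^2}{6}+1}$, for which the ratio $\mathcal{W}(i-1)/\mathcal{W}(i)=e^{-\frac{i}{3}+\frac16}$ already decays in $i$, and after bookkeeping extracts the rate $\tilde{\ell}(i)=\frac16(i+3)$; the hypothesis $\hat{p}<\frac16$ is then exactly what makes $\tilde{\ell}(i)-\hat{p}\,i+f(i)=\frac12+(\frac16-\hat{p})i+f(i)$ norm-like. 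You instead take $\mathcal{W}(i)=e^{\beta i}$ with a tunable $\beta\in(\hat{p},1)$; since the chain jumps to $0$ with probability $1-O(e^{-i})$ and stays near $i$ with probability $O(e^{-i})$, your drift ratio is bounded by $C_1e^{-i}+e^{-\beta i}\leq(C_1+1)e^{-\beta i}$ (the restriction $\beta<1$ being what lets the $e^{-\beta i}$ term dominate), giving $\tilde{\ell}(i)\sim\beta i$ and hence the norm-like margin $(\beta-\hat{p})i+f(i)$. Your computation checks out, and your route is in fact more flexible: it only uses $\hat{p}<1$, so your closing remark that $\hat{p}<\frac16$ is ``precisely where the hypothesis enters'' is accurate for the paper's argument but overstated for yours, where that hypothesis is used only through $\hat{p}<1$. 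Two minor points of precision: (a) a norm-like $f$ need not be \emph{nonnegative} as you assert, only bounded below (its sublevel sets are finite), but that is all your argument requires for $(\beta-\hat{p})i+f(i)$ to be norm-like; (b) Assumption~\ref{assm 2.2}(b) asks for a nonnegative $\tilde{\ell}$, so one should take $\tilde{\ell}(i)=\max\{\beta i-\ln(C_1+1),0\}$ and absorb the finitely many exceptional states into $\tilde{\mathscr{K}}$ and $\tilde{C}$, which is exactly the adjustment you sketch. Both proofs handle the states $i=0,1$ identically (finite row sums pushed into $\tilde{\mathscr{K}}$), and your irreducibility and reference-state arguments at $i_0=0$ coincide with the paper's.
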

\begin{proof}
	Consider the Lyapunov function $\mathcal{W}(i):=e^{\frac{i^2}{6}+1}$ for $i\in S$. Then $\mathcal{W}(i)\geq 1$ for all $i\in S$. Now for each $i\geq 2$, and $(u,v)\in U(i)\times V(i)$, we have
	\interdisplaylinepenalty=0
	\begin{align}
	&\sum_{j\in S} {P}(j|i,u,v)\mathcal{W}(j)\nonumber\\
	&={P}(i-1|i,u,v)\mathcal{W}(i-1)+{P}(i|i,u,v)\mathcal{W}(i)+{P}(i+1|i,u,v)\mathcal{W}(i+1)+{P}(0|i,u,v)\mathcal{W}(0)\nonumber\\
	&= \frac{1}{2(L_1+L_2)} \biggl[ ue^{-i} e^{\frac{(i-1)^2}{6}+1}+e^{\frac{i^2}{6}+1}\bigg(ue^{-i}+ve^{-2i}\bigg)+ve^{-2i} e^{\frac{(i+1)^2}{6}+1} \biggr] +e\biggl(1-\frac{2(ue^{-i}+ve^{-2i})}{2(L_1+L_2)}\biggr)\nonumber\\
	&=e^{\frac{i^2}{6}+1}\biggl[\frac{ue^{-i}}{2(L_1+L_2)}e^{-\frac{i}{3}+\frac{1}{6}}+\bigg(\frac{ue^{-i}+ve^{-2i}}{2(L_1+L_2)}\bigg)+\frac{ve^{-2i}}{2(L_1+L_2)}e^{\frac{i}{3}+\frac{1}{6}}+e^{-\frac{i^2}{6}}\biggl(1-\frac{2(ue^{-i}+ve^{-2i})}{2(L_1+L_2)}\biggr)\biggr]\nonumber\\
	&\leq e^{\frac{i^2}{6}+1}e^{-\frac{i}{3}+\frac{1}{6}}\biggl[\frac{ue^{-i}}{2(L_1+L_2)}+\frac{u+v}{2(L_1+L_2)}+\frac{v}{2(L_1+L_2)}+\biggl(1-\frac{2(ue^{-i}+ve^{-2i})}{2(L_1+L_2)}\biggr)\biggr]\nonumber\\
	&\leq 4e^{\frac{i^2}{6}+1}e^{-\frac{i}{3}+\frac{1}{6}}\nonumber\\
	&\leq e^{(\frac{i^2}{6}+1)-\frac{1}{3}(i+3)+4}\nonumber\\
	&=\mathcal{W}(i)e^{-\frac{1}{6}(i+3)-\frac{1}{6}(i+3)+4} \nonumber\\
	&\leq e^{-\frac{1}{6}(i+3)+4I_{\mathscr{M}}(i)}\mathcal{W}(i) \leq e^{-\frac{1}{6}(i+3)}\mathcal{W}(i)+\max_{j\in \mathscr{M}}\mathcal{W}(j)e^4 I_{\mathscr{M}}(i)\leq e^{-\tilde{\ell}(i)}\mathcal{W}(i)+\tilde{C} I_{\mathscr{M}}(i),\label{eq 4.3}
	\end{align}
	where $\tilde{\ell}(i)=\frac{1}{6}(i+3)$, $\mathscr{M}:=\{i:4-\frac{1}{6}(i+3)>0\}$, and $\tilde{C}=\max\{\max_{j\in \mathscr{M}}\mathcal{W}(j)e^4,e^{-2}\sum_{i\geq 1}(e^{-\frac{i^2}{6}}-e^{-\frac{i^2}{3}})+e\}$. It is clear that $0,1\in \mathscr{M}$.
	Also, we have
	\begin{align}
	\sum_{j\in S}{P}(j|0,u,v)W(j) =e {P}(0|0,u,v)+ \sum_{j\geq 1}e^{-2}e^{-\frac{j^2}{6}}	\leq \tilde{C} I_{\mathscr{M}}(i).\label{eq 4.4}
	\end{align}	
	By similar arguments as in (\ref{eq 4.3}), we have
		\begin{align}
	&\sum_{j\in S} {{P}}(j|1,u,v)\mathcal{W}(j)\nonumber\\
	&={{P}}(0|1,u,v)\mathcal{W}(0)+{{P}}(1|1,u,v)\mathcal{W}(1)+{{P}}(2|1,u,v)\mathcal{W}(2)+{{P}}(3|1,u,v)\mathcal{W}(3)\nonumber\\
	&=e\biggl(1-\frac{3e^{-2}v}{2(L_1+L_2)}\biggr)
	+e^{\frac{1}{6}+1}\biggl(\frac{e^{-2}v}{2(L_1+L_2)}\biggr)+e^{\frac{4}{6}+1}\bigg(\frac{e^{-2}v}{2(L_1+L_2)}\bigg)+e^{\frac{9}{6}+1}\biggl(\frac{e^{-2}v}{2(L_1+L_2)}\biggr)\nonumber\\
	&\leq e^{-\frac{1}{6}(1+3)+4I_{\mathscr{M}}(1)}\mathcal{W}(1) \leq e^{-\frac{2}{3}}\mathcal{W}(1)+\max_{j\in \mathscr{M}}\mathcal{W}(j)e^4 I_{\mathscr{M}}(1)\leq e^{-\tilde{\ell}(1)}\mathcal{W}(1)+\tilde{C} I_{\mathscr{M}}(1).\label{E4.5}
	\end{align}
	Now
		\begin{align}
	\tilde{\ell}(i)-\max_{(u,v)\in U(i)\times V(i)}{c}(i,u,v)=\frac{1}{2}+(\frac{1}{6}-\hat{p})i+\min_{(u,v)\in U(\cdot)\times V(\cdot)}[\tilde{c}_2(i,v)-\tilde{c}_1(i,u)].\label{eq 4.7}
	\end{align}
	We see from condition (II) and (\ref{eq 4.7}) that $\displaystyle \tilde{\ell}(i)-\sup_{(u,v)\in U(i)\times V(i)}{{c}}(i,u,v)$ is norm-like function.
	So, by condition (II), equations (\ref{eq 4.3}), (\ref{eq 4.4}), (\ref{E4.5}), and (\ref{eq 4.7}), Assumption \ref{assm 2.2} is satisfied.
	 Now, by (\ref{eq 4.3}), (\ref{eq 4.4}), and (\ref{E4.5}), Assumption \ref{assm 2.3} (iii) is verified. Also, by the above construction of probability kernel, (\ref{eq 4.2}), and condition (I), $P(\cdot|i,u,v)$ and $c(i,u,v)$ are continuous in $(u,v)\in U(i)\times V(i)$ for all $i,j\in S$.
	Hence by Theorem \ref{theo 2.2}, it follows that there  exists a saddle-point equilibrium for this controlled model.
\end{proof}
	\begin{remark} Note that, in view of Remark \ref{R1}, one can relax the condition (\ref{eq 4.1}).
\end{remark}

\section{Conclusion}
We have studied a risk-sensitive zero-sum stochastic game with ergodic cost criterion on a countable state space. Under
certain assumptions we have established the existence of a saddle point equilibrium and have characterized the same. Instead
of employing the traditional vanishing discount asymptotics, we have pursued a direct approach involving the principal eigenpair of the
corresponding Shapley equation. It would be interesting to study the same problem on  a general state space.

  	\bibliographystyle{elsarticle-num}

	 \nocite{*}
	\bibliographystyle{plain}
	\end{document}